\definecolor{ududff}{rgb}{0.30196078431372547,0.30196078431372547,1}
\newtheorem{theorem}{Theorem}[section]
\newtheorem*{Main}{Main Theorem}
\newtheorem{lemma}[theorem]{Lemma}
\newtheorem{proposition}[theorem]{Proposition}
\theoremstyle{definition}
\newtheorem{rem}[theorem]{Remark}
\newenvironment{enumeratei}{\begin{enumerate}[\upshape (a)]}
    {\end{enumerate}}
\def\irr#1{{\rm Irr}(#1)}
\def\cent#1#2{{\bf C}_{#1}(#2)}
\def\syl#1#2{{\rm Syl}_#1(#2)}
\def\oh#1#2{{\bf O}_{#1}(#2)}
\def\zent#1{{\bf Z}(#1)}
\def\V#1{{V}(#1)}
\newcommand{\N}{{\mathbb N}}
\newcommand{\F}{{\mathbb F}}
\newcommand{\K}{{\mathbb K}}
\def\irr#1{{\rm Irr}(#1)}
\def\cent#1#2{{\bf C}_{#1}(#2)}
\def\syl#1#2{{\rm Syl}_#1(#2)}
\def\norm#1#2{{\bf N}_{#1}(#2)}
\def\oh#1#2{{\bf O}_{#1}(#2)}
\def\zent#1{{\bf Z}(#1)}
\def \nq{\mathcal{N}_q}
\def\SL#1{{\rm SL}_{2}(#1)}
\def\PSL#1{{\rm PSL}_{2}(#1)}
\def\V#1{{\rm V}(#1)}
\def\C{{\Bbb C}}
\def\irr#1{{\rm Irr}(#1)}
\def\cd#1{{\rm cd}(#1)}
\def\cent#1#2{{\bf C}_{#1}(#2)}
\def\syl#1#2{{\rm Syl}_#1(#2)}
\def\oh#1#2{{\bf O}_{#1}(#2)}
\def\zent#1{{\bf Z}(#1)}
\def\ker#1{{\rm ker}(#1)}
\def\norm#1#2{{\bf N}_{#1}(#2)}
\mathchardef\coso="2023
\def \nq{\mathcal{N}_q}
\begin{document}

\title[Degree graphs with a cut-vertex. II]{Non-solvable groups whose character degree graph has a cut-vertex. II}

\author[]{Silvio Dolfi}
\address{Silvio Dolfi, Dipartimento di Matematica e Informatica U. Dini,\newline
Universit\`a degli Studi di Firenze, viale Morgagni 67/a,
50134 Firenze, Italy.}
\email{silvio.dolfi@unifi.it}

\author[]{Emanuele Pacifici}
\address{Emanuele Pacifici, Dipartimento di Matematica e Informatica U. Dini,\newline
Universit\`a degli Studi di Firenze, viale Morgagni 67/a,
50134 Firenze, Italy.}
\email{emanuele.pacifici@unifi.it}

\author[]{Lucia Sanus}
\address{Lucia Sanus, Departament de Matem\`atiques, Facultat de
 Matem\`atiques, \newline
Universitat de Val\`encia,
46100 Burjassot, Val\`encia, Spain.}
\email{lucia.sanus@uv.es}

\thanks{The first two authors are partially supported by the italian INdAM-GNSAGA. The research of the third author is partially supported by Ministerio de Ciencia e Innovaci\'on PID2019-103854GB-I00. This research has been carried out during a visit of the third author at the Dipartimento di Matematica e Informatica ``Ulisse Dini” (DIMAI) of Universit\`a degli Studi di Firenze. She thanks DIMAI for the financial support and hospitality.}

\keywords{Finite Groups; Character Degree Graph.}
\subjclass[2020]{20C15}

\begin{abstract} 
Let $G$ be a finite group, and let \(\cd G\) denote the set of degrees of the irreducible complex characters of $G$. Define then the \emph{character degree graph} \(\Delta(G)\)  as the (simple undirected) graph whose vertices are the prime divisors of the numbers in $\cd{G}$, and two distinct vertices $p$, $q$ are adjacent if and only if $pq$ divides some number in $\cd{G}$. This paper continues the work, started in \cite{DPSS}, toward the classification of the finite non-solvable groups whose degree graph possesses a \emph{cut-vertex}, i.e. a vertex whose removal increases the number of connected components of the graph. While, in \cite{DPSS}, groups with no composition factors isomorphic to \(\PSL{t^a}\) (for any prime power \(t^a\geq 4\)) were treated, here we consider the complementary situation \emph{in the case when \(t\) is odd and \(t^a> 5\)}.  The proof of this classification will be then completed in the third and last paper of this series (\cite{DPS3}), that deals with the case $t=2$.
\end{abstract}

\maketitle

\section{Introduction}

Let $G$ be a finite group, and let \({\rm cd}(G)\) denote the \emph{degree set} of $G$, i.e. the set of degrees of the irreducible complex characters of $G$. In the paper ``Research in Representation Theory at Mainz (1984--1990)" (\cite{mainz}), Bertram Huppert writes: ``Several special results, known since some time, made it clear that the structure of a finite group $G$ is controlled to a large extent by the type of the prime-number-decomposition of the degrees of the irreducible characters of $G$ over $\mathbb{C}$". That work highly contributed to boost the interest of many authors, and the study of the arithmetical properties of the degree set, both on their own account and in connection with the structure of the group, is nowadays a well-established and classical research topic in the representation theory of finite groups.

Part of the discussion in \cite{mainz} concerns the \emph{character degree graph} \(\Delta(G)\) of a finite group $G$ (\emph{degree graph} for short), a tool that has been devised in order to investigate the arithmetical structure of the degree set \({\rm cd}(G)\). This is the simple undirected graph whose vertex set \(\V G\) consists of the primes dividing the numbers in $\cd{G}$, and such that two distinct vertices $p$ and $q$ are adjacent if and only if $pq$ divides some number in $\cd{G}$. As discussed, for instance, in the survey \cite{overview}, many results in the literature illustrate the deep link between graph-theoretical properties of \(\Delta(G)\) (in particular, connectivity properties) and the group structure of \(G\).

The series of three papers including the present one (together with \cite{DPSS,DPS3}) is a contribution in this framework. Namely, our purpose is to characterize the finite non-solvable groups whose degree graph has a \emph{cut-vertex}, i.e. a vertex whose removal increases the number of connected components of the graph. We mention that an analysis of the solvable case is carried out in \cite{LM}. 

The main results of the whole series are Theorem~A, Theorem~B and Theorem~C of \cite{DPSS}: they are stated in full details and commented in \cite[Section~2]{DPSS}, where also the relevant graphs are described with some figures (we refer the reader to \cite[Section~2]{DPSS}, as well as to the Introduction of \cite{DPSS} for a more exhaustive presentation of the problem). In particular Theorem~C of \cite{DPSS}, which provides a characterization of the finite non-solvable groups whose degree graph has a cut-vertex \emph{and it is disconnected}, is entirely proved in that paper and will not be discussed further. As regards Theorem~A and Theorem~B of \cite{DPSS}, they deal with finite non-solvable groups whose degree graph has \emph{connectivity degree~$1$} (i.e. it has a cut-vertex \emph{and it is connected}), and they are only partially proved in \cite{DPSS}. As we said, we do not reproduce the full statements of these theorems here but, for the convenience of the reader, we summarize next some properties of the relevant class of groups.

If $G$ is a finite non-solvable group whose degree graph has connectivity degree $1$, then $G$ has a unique non-solvable composition factor $S$, belonging to a short list of isomorphism types:
${\rm{PSL}}_2(t^a)$, ${\rm{Sz}}(2^a)$, ${\rm{PSL}}_3(4)$,   ${\rm{M}}_{11}$, ${\rm{J}}_1$; moreover, if  
$S \not\cong {\rm{PSL}}_2(t^a)$, then $G$ has a (minimal) normal subgroup isomorphic to $S$. 
Finally, denoting by $R$ the solvable radical of $G$, the vertex set of $\Delta(G)$ consists of the primes in \(\pi(G/R)\) (the set of prime divisors of the
order of the almost-simple group $G/R$) and, if not already there, the cut-vertex of $\Delta(G)$. 
We also remark that in all cases, except  possibly when the non-abelian  simple section $S$ is isomorphic to the Janko group ${\rm{J}}_1$,
the cut-vertex $p$ of $\Delta(G)$ is a complete vertex (i.e. it is adjacent to all other vertices) of $\Delta(G)$; moreover, the graph obtained from $\Delta(G)$ by removing the vertex $p$ (and all the edges incident to $p$) has exactly two
connected components, which are complete graphs, and one of them consists of a single vertex. 

While in \cite{DPSS} we classify the finite non-solvable groups $G$ such that \(\Delta(G)\) has connectivity degree $1$ \emph{and whose unique non-solvable composition factor $S$ is not isomorphic to \(\PSL{t^a}\)} for any prime power \(t^a\geq 4\) (which covers cases~(a)--(d) of \cite[Theorem~A]{DPSS}), here we consider the situation when $S\cong\PSL{t^a}$ \emph{for an odd prime \(t\) with \(t^a>5\)}, thus covering case~(e) of \cite[Theorem~A]{DPSS}. Note that in conclusion (b) of the following theorem, by the ``natural module" for \(K/L\cong\SL{t^a}\) we mean the standard \(2\)-dimensional module for $K/L$ over the field with \(t^a\) elements, or any of its Galois conjugates, seen as a \(2a\)-dimensional $K/L$-module over the field with \(t\) elements.

\begin{Main}
\label{main}
Let \(G\) be a finite group and let \(R\) be its solvable radical. Assume that \(G\) has a composition factor \(S\cong \PSL{t^a}\), for a suitable odd prime \(t\) with \(t^a> 5\), and let \(p\) be a prime number. Then, denoting by $K$ the last term in the derived series of $G$, the graph \(\Delta(G)\) is connected and has cut-vertex \(p\) if and only if: \(G/R\) is an almost-simple group with socle isomorphic to $S$, $\V G=\pi(G/R)\cup\{p\}$, the order of \(G/KR\) is not a multiple of \(t\), the prime $p$ is not $t$, and one of the following holds.
\begin{enumeratei} 
\item \(K\) is isomorphic to \(\PSL{t^a}\) or to \(\SL{t^a}\), and $\V{G/K}=\{p\}$.
\item \(K\) contains a minimal normal subgroup \(L\) of \(G\) such that  \(K/L\) is isomorphic to \(\SL{t^a}\) and \(L\) is the natural module for \(K/L\); moreover, $\V{G/K}=\{p\}$.
 \item \(t^a=13\) and $p=2$. \(K\) contains a minimal normal subgroup \(L\) of \(G\) such that \(K/L\) is isomorphic to \(\SL{13}\), and \(L\) is one of the two \(6\)-dimensional irreducible modules for \(\SL{13}\) over the field with three elements. Moreover, $\V{G/K}\subseteq\{2\}$.
\end{enumeratei}
In all cases, \(p\) is a complete vertex and the unique cut-vertex of \(\Delta(G)\), and it is the unique neighbour of \(t\) in \(\Delta(G)\) with the only exception of case {\rm(c)}.
\end{Main}

As already mentioned, the even-characteristic case is treated in \cite{DPS3}, and it covers the remaining case (f) of \cite[Theorem~A]{DPSS} together with \cite[Theorem~B]{DPSS}. As regards the ``only if" part of the above theorem, the main difference from the situation treated in \cite{DPSS} is that here the subgroup \(K\) need not be minimal normal in $G$. In fact, \(K\) can be isomorphic to \(\SL{t^a}\), or even, it is possible to have a non-trivial normal subgroup \(L\) of \(G\) such that \(K/L\cong\SL{t^a}\). Much of the work carried out in this paper consists in showing that such a subgroup $L$ is minimal normal in $G$, and in controlling the (conjugation) action of $K/L$ on~$L$. To this end, two results concerning orbit properties in certain actions of \(\SL{t^a}\) (Theorem~\ref{TipoIeIIPieni}, that should be compared with Lemma~3.10 of \cite{DPSS}, and Theorem~\ref{TipoIeII}) turn out to be crucial in our analysis; these might be of interest on their own.

To conclude, we point out that the structure of the groups appearing in the Main Theorem (as well as of the corresponding graphs), quite surprisingly, does not fall too far from the structure of the finite non-solvable groups whose degree graph has two connected components (see Theorem~\ref{LewisWhite}). We will comment on this fact in Remark~\ref{comparison}. At any rate, the graphs related to the groups of the Main Theorem are displayed in Table~\ref{c}.
In the following discussion, every group is tacitly assumed to be a finite group.

\begin{figure}[h]\label{c}
\begin{tikzpicture}[line cap=round,line join=round,>=triangle 45,x=1.0cm,y=1.0cm]
\clip(4.957179639559777,2.3750082393717458) rectangle (18.521254061592398,11.439779926291106);
\draw (9.1,5.7) node[anchor=north west] {\scriptsize $3$};
\draw (10.184310758294298,5.7) node[anchor=north west] {\scriptsize $2$};
\draw (11.132693197558368,6.5) node[anchor=north west] {\scriptsize $13$};
\draw (11.19885941425121,4.5) node[anchor=north west] {\scriptsize $7$};
\draw [line width=0.8pt] (5.254421391528034,3.2671357237409713)-- (17.92714866425532,3.236226632831879);
\draw (10.376232345281109,11.28) node[anchor=north west] {\scriptsize TABLE 1 };
\draw [line width=0.8pt] (5.198498650078525,10.801501349921475)-- (17.871225922805806,10.770592259012384);
\draw [line width=0.8pt] (7.,9.) circle (1.0053670359753404cm);
\draw [line width=0.8pt] (8.331283305519394,9.029609897345264) circle (1.0053670359753388cm);
\draw (5.934891812366582,10.403175864769914) node[anchor=north west] {\scriptsize (clique)};
\draw (8.089047229687633,10.425231270334194) node[anchor=north west] {\scriptsize (clique)};
\draw (7.449440468323494,9.146017747605915) node[anchor=north west] {\scriptsize $\pi_0$};
\draw (7.603828307273459,9.543015047762967) node[anchor=north west] {\scriptsize $p$};
\draw [line width=0.8pt] (7.603698516681713,9.33599150858786)-- (7.628081865416518,10.272038341433083);
\draw (7.471495873887775,10.7) node[anchor=north west] {\scriptsize $t$};
\draw (6.523113434623705,9.410682614377283) node[anchor=north west] {\scriptsize $\pi_1$};
\draw (8.463989124280406,9.410682614377283) node[anchor=north west] {\scriptsize $\pi_2$};
\draw (5.159949917773811,8.1) node[anchor=north west] {\scriptsize $\pi_1\cup\pi_0=\pi(t^a-1)\cup\pi(G/KR)\cup\{p\}$};
\draw (5.159949917773811,7.8) node[anchor=north west] {\scriptsize $\pi_2\cup\pi_0=\pi(t^a+1)\cup\pi(G/KR)\cup\{p\}$};
\draw (5.890781001238021,7.38158530246346) node[anchor=north west] {\footnotesize $\text{Case (a) of Main Theorem }$};
\draw [line width=0.8pt] (14.,9.) circle (1.0053670359753388cm);
\draw (13.580843215193525,10.401397487027037) node[anchor=north west] {\scriptsize (clique)};
\draw (11.98,9.5) node[anchor=north west] {\scriptsize $2$};
\draw (13.1,9.2) node[anchor=north west] {\scriptsize $p$};
\draw [line width=0.8pt] (12.1650717681562,8.99869517803798)-- (12.99463296402466,9.);
\draw (11.881075636822439,7.38158530246346) node[anchor=north west] {\footnotesize $\text{Case (b) of Main Theorem }$};
\draw [line width=0.8pt] (9.324548106534905,5.239031874151431)-- (10.368257126347096,5.239031874151431);
\draw [line width=0.8pt] (10.368257126347096,5.229339251136105)-- (11.375302658765392,5.992094751913781);
\draw [line width=0.8pt] (11.375302658765392,5.992094751913781)-- (11.375302658765392,4.4908258236212195);
\draw [line width=0.8pt] (10.368257126347096,5.229339251136105)-- (11.375302658765392,4.4908258236212195);
\draw (8.683041830001739,4.071773117899884) node[anchor=north west] {\footnotesize $\text{Case (c) of Main Theorem} $};
\begin{scriptsize}
\draw [fill=black] (7.603698516681713,9.33599150858786) circle (1.5pt);
\draw [fill=black] (7.628081865416518,10.272038341433083) circle (1.5pt);
\draw [fill=black] (12.1650717681562,8.99869517803798) circle (1.5pt);
\draw [fill=black] (12.99463296402466,9.) circle (1.5pt);
\draw [fill=black] (9.324548106534905,5.249031874151431) circle (1.5pt);
\draw [fill=black] (10.368257126347096,5.229339251136105) circle (1.5pt);
\draw [fill=black] (11.375302658765392,5.992094751913781) circle (1.5pt);
\draw [fill=black] (11.375302658765392,4.4908258236212195) circle (1.5pt);
\end{scriptsize}
\end{tikzpicture}
\end{figure}

\section{Preliminaries}

Given a group \(G\), we denote by \(\Delta(G)\) the degree graph of \(G\) as defined in the Introduction. Our notation concerning character theory is standard, and we will freely use basic facts and concepts such as Ito-Michler's theorem, Clifford's theory, Gallagher's theorem, character triples and results about extension of characters (see \cite{Is}). 

As customary (and already used), given a positive integer \(n\), the set of prime divisors of \(n\) will be denoted by \(\pi(n)\), but we simply write \(\pi(G)\) for \(\pi(|G|)\). Moreover, for a prime power \(q\), we use the symbol \(\mathbb{F}_q\) to denote the field having \(q\) elements.

\smallskip
We start by recalling some structural properties of the \(2\)-dimensional special linear or projective special linear groups. Although this paper treats groups of this kind in odd characteristic, most of the results in this section and in the following one will be stated and proved with no restrictions about the characteristic.

\begin{rem}\label{Subgroups}
Recall that, for an odd prime \(t\), the group \(\PSL{t^a}\) has order \(\dfrac{t^a(t^a-1)(t^a+1)}{2}\). Moreover, as stated in \cite[II.8.27]{Hu}, the proper subgroups of this group are of the following types.
\begin{itemize}
\item[(i$_+$)] Dihedral groups of order $t^a + 1$ and their subgroups. 
\item[(i$_-$)] Dihedral groups of order $t^a - 1$ and their subgroups. 
\item[(ii)] Frobenius groups with elementary abelian kernel of order $t^a$ and cyclic complements of order $(t^a -1)/2$, and their subgroups;
\item[(iii)] $A_4$, $S_4$ 
    or $A_5$; 
    \item[(iv)] $\PSL{t^b}$ or ${\rm PGL}_2(t^b)$, where $b$ divides $a$. 
\end{itemize}  
\end{rem}

In our discussion, we will freely refer to the above labels when dealing with a subgroup of \(\PSL{t^a}\). By a subgroup of type (i) we will mean a subgroup that is either of type (i$_-$) or of type (i$_+$).

\begin{lemma}
\label{PSL2}
Let \(G\cong\SL{t^a}\) or \(G\cong\PSL{t^a}\), where \(t\) is a prime and \(t^a\geq 4\). Let \(r\) be an odd prime divisor of \(t^a-1\), and let \(R\) be a subgroup of \(G\) with \(|R|=r^b\) for a suitable \(b\in\N-\{0\}\). Then \(R\) lies in the normalizer in \(G\) of precisely two Sylow \(t\)-subgroups of \(G\).
\end{lemma}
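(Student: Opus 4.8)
The plan is to transfer the whole question into the natural action of $G$ on the projective line, so that ``$R$ normalizes a Sylow $t$-subgroup'' becomes ``$R$ fixes a point''. Write $q=t^a$, and realize $G$ (whether $\SL q$ or $\PSL q$) as acting on the $q+1$ lines of $\F_q^2$, i.e.\ on $\pr^1(\F_q)$, this action being $2$-transitive. A Sylow $t$-subgroup $U$ of $G$ is the (image of the) group of upper unitriangular matrices, of order $q$, and its normalizer is the line-stabilizer $B$: indeed $U\nor B$, the Borel $B$ is maximal (e.g.\ by Remark~\ref{Subgroups}, $B$ being of type (ii)), and $U$ is not normal in $G$, so $N_G(U)=B$ by maximality. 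Hence the Sylow $t$-subgroups are $q+1$ in number, and the assignment $U\mapsto\langle e_1\rangle$ extends to a $G$-equivariant bijection between the set of Sylow $t$-subgroups of $G$ and $\pr^1(\F_q)$ (both being the $G$-set $G/B$). Under this bijection, a subgroup normalizes $U$ exactly when it fixes the corresponding point, so the number of Sylow $t$-subgroups normalized by $R$ equals the number of points of $\pr^1(\F_q)$ fixed by $R$.

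Next I would pin down the structure of $R$. Since $r$ is odd and $r\mid q-1$, we have $r\nmid q$ and $r\nmid q+1$, so the $r$-part of $|G|$ equals the $r$-part of $q-1$. A split maximal torus $T$ (the image of the diagonal subgroup) is cyclic of order dividing $q-1$ and, as $r$ is odd, contains a full Sylow $r$-subgroup of $G$; therefore every $r$-subgroup of $G$, and in particular $R$, is cyclic and contained in a conjugate of $T$. Let $g$ generate $R$; after conjugating we may assume $g\in T$, so $g$ is the image of a diagonal matrix $\mathrm{diag}(\lambda,\lambda^{-1})$ with $\lambda\in\F_q^\times$. Because $g$ has order a positive power of the odd prime $r$, a lift of $g$ to $\SL q$ has eigenvalue $\lambda$ of order $>2$, whence $\lambda\neq\lambda^{-1}$ and $g$ has two \emph{distinct} eigenvalues in $\F_q$.

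I would then count fixed points. A non-central diagonalizable element with two distinct $\F_q$-eigenvalues fixes on $\pr^1(\F_q)$ exactly its two eigenlines $\langle e_1\rangle$ and $\langle e_2\rangle$: any line $\langle ae_1+be_2\rangle$ with $a,b\neq 0$ is fixed only if $\lambda=\lambda^{-1}$. Since $R=\langle g\rangle$, the fixed points of $R$ coincide with those of $g$, so $R$ fixes exactly two points of $\pr^1(\F_q)$ and hence normalizes precisely two Sylow $t$-subgroups, namely the two unipotent subgroups attached to the eigenlines. The $\SL q$ and $\PSL q$ cases run in parallel, the only care needed being that $-I$ acts trivially on $\pr^1(\F_q)$ (so the projective action is faithful and the count is unaffected) and that $R$, having odd order, avoids the centre.

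I expect the routine part to be the equivariant setup of the first paragraph; the only genuinely load-bearing point is passing from ``at least two'' to ``exactly two''. The equivariant picture shows immediately that the torus $T$ already normalizes two Sylow $t$-subgroups, so the content is the \emph{upper} bound, and this is precisely where the two hypotheses are essential: ``$r$ odd'' forces $R$ cyclic and $\lambda\neq\lambda^{-1}$ (ruling out the central, all-fixing behaviour of an involution), while ``$r\mid q-1$'' places the eigenvalues in $\F_q$ so that both eigenlines are rational. Had instead $r\mid q+1$, the relevant torus would be non-split, $g$ would have no eigenvector over $\F_q$, and $R$ would fix no point of $\pr^1(\F_q)$, thus normalizing zero Sylow $t$-subgroups — which is exactly why the statement singles out the divisors of $t^a-1$.
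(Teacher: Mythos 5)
Your proof is correct, but it follows a genuinely different route from the paper's. The paper disposes of the lemma by a double count of the set of pairs $(R_0,T)$ with $T\in\syl t G$ and $R_0\leq\norm G T$ of order $r^b$: it uses that $G$ has $t^a+1$ Sylow $t$-subgroups, that each normalizer (a Frobenius group with kernel of order $t^a$ and cyclic complements) contains exactly $t^a$ subgroups of order $r^b$, and that $G$ has $t^a(t^a+1)/2$ subgroups of order $r^b$ in total; dividing gives $n=2$ at once. You instead transport the problem to the $2$-transitive action on $\pr^1(\F_{t^a})$, reduce $R$ to a cyclic group generated by a split semisimple element $\mathrm{diag}(\lambda,\lambda^{-1})$ with $\lambda\neq\lambda^{-1}$ (using that $r$ is odd and divides $t^a-1$, so the split torus carries a full Sylow $r$-subgroup), and count eigenlines. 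Both arguments are complete; the paper's is shorter but leans on the subgroup-counting facts it quotes (in particular the count of subgroups of order $r^b$ in a Borel and in $G$, which your approach never needs), whereas yours is more self-contained, identifies explicitly \emph{which} two Sylow $t$-subgroups are normalized (those attached to the two eigenlines), and makes transparent why the hypotheses ``$r$ odd'' and ``$r\mid t^a-1$'' are exactly what is needed — information the counting argument does not reveal. The small points you gloss over (maximality of the Borel, choosing an odd-order lift in the $\PSL{t^a}$ case) are routine and fixable exactly as you indicate.
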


\begin{proof}
We start by observing that the number of Sylow \(t\)-subgroups of \(G\) is \(t^a+1\) and, for \(T\in{\rm{Syl}}_t(G)\), the number of subgroups of order \(r^b\) lying in \(\norm G T\) is \(t^a\). Moreover, the total number of subgroups of \(G\) having order \(r^b\) is \(t^a(t^a+1)/2\). 

Now, consider the set \[X=\{(R_0,T)\; \mid\; T\in\syl t G,\, |R_0|=r^b,\, R_0\subseteq\norm G T\}.\]
On one hand we get \[|X|=\sum_{T\in\syl t G}t^a=t^a(t^a+1);\]
on the other hand, if \(n\) denotes the number of Sylow \(t\)-subgroups of \(G\) that are normalized by a given subgroup of order \(r^b\), we also have \[|X|=\sum_{|R_0|=r^b}n=t^a(t^a+1)/2\cdot n.\]
The desired conclusion is then readily achieved.
\end{proof}

The following results are more specific of the context we will analyze.

\begin{lemma}[\mbox{\cite[Theorem~5.2]{W}}]
  \label{PSL2bis}
Let $S \cong \PSL{t^a}$ or $S \cong \SL{t^a}$, with $t$ prime and $a \geq 1$. 
Let $\rho_{+} = \pi(t^a+1)$ and $\rho_{-} = \pi(t^a-1)$. For a subset $\rho$ of vertices 
of $\Delta(S)$, we denote by $\Delta_{\rho}$ the subgraph of $\Delta = \Delta(S)$ induced
by the subset $\rho$.
Then 
\begin{enumeratei}
\item if $t=2$ and $a \geq 2$, then $\Delta(S)$ has three connected components, $\{t\}$, $\Delta_{\rho_{+}}$ and 
$\Delta_{\rho_{-}}$, and each of them is a complete graph.  
\item if $t > 2$ and $t^a > 5$, then  $\Delta(S)$ has two connected components, 
$\{t\}$ and  $\Delta_{\rho_{+} \cup \rho_{-}}$; moreover, both  $\Delta_{\rho_{+}}$ and $\Delta_{\rho_{-}}$ are
complete graphs, no vertex  in $\rho_{+} - \{ 2 \}$ is adjacent to any vertex  in  
$\rho_{-} - \{ 2\}$ and $2$ is a complete vertex of $\Delta_{\rho_{+} \cup \rho_{-}}$. 
\end{enumeratei}
\end{lemma}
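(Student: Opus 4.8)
The plan is to read the graph directly off the explicit list of irreducible character degrees of $S$, which is classical (and, in the form needed here, is the content of \cite{W}). Writing $q=t^a$, for $q$ odd the set $\cd{S}$ consists of $1$, $q$, $q+1$, $q-1$, together with $(q+1)/2$ and/or $(q-1)/2$ (which of these half-degrees occur, and with which multiplicity, depends on $q \bmod 4$ and on whether $S$ is $\PSL{q}$ or $\SL{q}$, but this will be immaterial below); for $q$ even one has $\SL{q}=\PSL{q}$ and $\cd{S}=\{1,q,q+1,q-1\}$. In either case every degree is $1$, $q=t^a$, or a divisor of $q+1$ or of $q-1$, so $\V{S}=\{t\}\cup\rho_+\cup\rho_-$; here each prime of $\rho_+$ (resp. $\rho_-$) is a genuine vertex because it divides the degree $q+1$ (resp. $q-1$), and $t$ is a vertex because it divides $q$.

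First I would isolate the vertex $t$. The only degree divisible by $t$ is $q=t^a$, which has no prime divisor other than $t$; hence $t$ is adjacent to no other vertex, and $\{t\}$ is a connected component of $\Delta(S)$. Next, since any two distinct primes dividing $q+1$ have product dividing the degree $q+1$, the induced subgraph $\Delta_{\rho_+}$ is complete; the same argument with $q-1$ shows $\Delta_{\rho_-}$ is complete. It then remains only to understand the adjacencies \emph{between} $\rho_+$ and $\rho_-$, and here the two cases split according to $\gcd(q-1,q+1)$.

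If $t=2$, then $q$ is even, so $q-1$ and $q+1$ are coprime odd integers and $\rho_+\cap\rho_-=\emptyset$; moreover a prime of $\rho_+$ divides $q+1$ but none of the remaining degrees $1,q,q-1$ (being odd and coprime to $q-1$), hence it is adjacent to no prime of $\rho_-$. Thus there are no edges between $\rho_+$ and $\rho_-$, and $\Delta(S)$ splits into the three complete components $\{t\}$, $\Delta_{\rho_+}$, $\Delta_{\rho_-}$, giving (a). If instead $t$ is odd and $q>5$, then $q\geq 7$, so $q\pm 1$ are both even and at least $6$; thus $2\in\rho_+\cap\rho_-$, and for any odd $p\in\rho_+$ one has $2p\mid q+1$ while for any odd $r\in\rho_-$ one has $2r\mid q-1$, so $2$ is adjacent to every other vertex of $\Delta_{\rho_+\cup\rho_-}$. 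In particular $2$ is a complete vertex there and $\Delta_{\rho_+\cup\rho_-}$ is connected. Finally, an odd $p\in\rho_+-\{2\}$ divides $q+1$ (and hence $(q+1)/2$ when the latter is a degree) but, being odd and coprime to $q-1$ since $\gcd(q-1,q+1)=2$, it divides none of $q-1$, $(q-1)/2$, $q$; symmetrically for $r\in\rho_--\{2\}$. Hence no degree is divisible by both $p$ and $r$, so there is no edge joining $\rho_+-\{2\}$ to $\rho_--\{2\}$, and together with the completeness of $\Delta_{\rho_+}$ and $\Delta_{\rho_-}$ this yields (b).

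The one point demanding genuine care---and the reason for the hypotheses $a\geq 2$ in (a) and $q>5$ in (b)---is the exactness of the degree list: the whole argument rests on the claim that \emph{no} irreducible degree outside the displayed shapes exists, so that the half-degrees introduce no primes beyond $\rho_+\cup\rho_-$ and no cross-edges. This must be verified against the full character table, and it is precisely here that the small groups fail (for instance $\PSL{5}\cong A_5$ has $\cd{A_5}=\{1,3,4,5\}$, with no character of degree $q+1=6$, so the generic description, and hence the conclusion, breaks down). Once the degree list is granted, the remaining verifications are only the elementary divisibility facts $\gcd(q-1,q+1)\in\{1,2\}$ and $t\nmid q\pm 1$ used above.
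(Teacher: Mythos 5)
Your argument is correct. Note, however, that the paper offers no proof of this lemma at all: it is imported verbatim as a citation of Theorem~5.2 of \cite{W}, so there is nothing internal to compare against. Your verification---reading the graph directly off the generic degree list $1$, $t^a$, $t^a\pm 1$, $(t^a\pm 1)/2$ of $\PSL{t^a}$ and $\SL{t^a}$, isolating $t$ because the only degree it divides is the prime power $t^a$, getting completeness of $\Delta_{\rho_\pm}$ from the full degrees $t^a\pm 1$, and separating $\rho_+-\{2\}$ from $\rho_--\{2\}$ via $\gcd(t^a-1,t^a+1)\le 2$---is exactly the standard derivation underlying White's theorem, and you correctly identify the only genuinely delicate point, namely that the degree list is \emph{exactly} as claimed (which requires $t^a>5$ when $t$ is odd, as your $A_5$ example shows, and which also needs both $t^a+1$ and $t^a-1$ to actually occur as degrees so that every prime of $\rho_\pm$ is a vertex).
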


\begin{theorem}[\mbox{\cite[Theorem~3.9]{DPSS}}]\label{MoretoTiep}
  Let $G$ be an almost-simple group with socle $S$, and let $\delta = \pi(G) - \pi(S)$.
  If $\delta \neq \emptyset$, then $S$ is a simple group of Lie type, and every vertex in \(\delta\) is adjacent to every other vertex of $\Delta(G)$ that is not the characteristic of $S$.
\end{theorem}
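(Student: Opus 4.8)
My plan is to first pin down $S$ using the classification of finite simple groups, and then to force the adjacencies by applying Clifford theory to a field automorphism whose order is a prime $r\in\delta$. For the first point, recall that $G/S$ embeds into $\out{S}$, so every prime of $\delta=\pi(G)-\pi(S)$ divides $|\out{S}|$ but not $|S|$. Scanning the (CFSG-based) structure of $\out{S}$, for $S$ alternating or sporadic one has $|\out{S}|\in\{1,2,4\}$, whose prime divisors already lie in $\pi(S)$; hence $\delta\neq\emptyset$ forces $S$ to be of Lie type, say over $\mathbb{F}_q$ with $q=p^f$. For such $S$ the diagonal and graph parts of $\out{S}$ have order dividing $|S|$, so each $r\in\delta$ divides the order $f$ of the cyclic group of field automorphisms while being coprime to $|S|$. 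In particular the Sylow $r$-subgroup of $G/S$ is cyclic and generated by a field automorphism, and $G$ contains a field automorphism $\sigma$ of order exactly $r$.

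Next I would record the mechanism producing the factor $r$ in a character degree. If $\theta\in\irr{S}$ is not fixed by $\sigma$, then, using that the Sylow $r$-subgroup of $G/S$ is cyclic with its unique order-$r$ subgroup induced by $\sigma$, one checks that $r$ divides the orbit length $[G:I_G(\theta)]$ of $\theta$ under $G$. Clifford theory then yields some $\chi\in\irr{G}$ lying over $\theta$ with $[G:I_G(\theta)]\,\theta(1)\mid\chi(1)$, so that $r\mid\chi(1)$ while $\theta(1)\mid\chi(1)$. Consequently, to show that $r$ is adjacent to a given vertex $s\neq p$ of $\Delta(G)$ it is enough, when $s\in\pi(S)$, to find a $\sigma$-moved $\theta\in\irr{S}$ whose degree is divisible by $s$; and when $s\in\delta$ (so that $s\nmid|S|$) to find, using a field automorphism $\tau$ of order $rs$, a character $\theta$ whose $\gen{\tau}$-orbit has length divisible by $rs$. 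In either case the resulting $\chi$ has $rs\mid\chi(1)$, giving the desired edge in $\Delta(G)$.

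The technical heart is therefore to guarantee that field automorphisms move enough characters: that the characters of $S$ of degree divisible by a fixed prime $s\neq p$ are not all $\sigma$-invariant, and likewise that not all characters are simultaneously fixed by the order-$r$ and order-$s$ field automorphisms. Here I would invoke Deligne--Lusztig theory together with Shintani descent: the $\sigma$-fixed irreducible characters of $S=\mathbf{G}^{F}$ are controlled by the much smaller subfield subgroup $\mathbf{G}^{F_0}$ defined over $\mathbb{F}_{p^{f/r}}$, their degrees being values of the pertinent generic degrees at $q_0=p^{f/r}$. A degree comparison then shows that a character with $s\mid\theta(1)$ must escape this fixed set. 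Making this uniform over the families of Lie type groups --- and settling the low-rank and exceptional cases, as well as the precise orbit-length divisibility needed in the previous step, by direct inspection of the known degree and fusion data --- is where the real work lies, and is the step I expect to be the main obstacle.

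Finally, the exclusion of the characteristic $p$ is genuine rather than an artefact of the method. Among the characters of $S$ of degree divisible by $p$ sits the Steinberg character, of degree $|S|_p$, which is invariant under every automorphism of $S$ and hence $\sigma$-fixed; more generally the $p$-divisible part of the character table is too rigid under field automorphisms to run the above construction, so adjacency of $r$ with $p$ need not hold. This matches the statement, which asserts adjacency of the vertices in $\delta$ only to the vertices different from the defining characteristic.
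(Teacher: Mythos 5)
You should first note that the paper you are working from does not prove this statement at all: it is imported verbatim as \cite[Theorem~3.9]{DPSS}, and (as the label suggests) it ultimately rests on the Moret\'o--Tiep analysis of character degrees of almost-simple groups of Lie type. So the only thing to assess is whether your blind argument would constitute a proof, and it does not yet. Your opening reductions are fine: $\delta\neq\emptyset$ forces $S$ to be of Lie type because $|\out{S}|\in\{1,2,4\}$ for alternating and sporadic socles, and for Lie type the diagonal and graph parts of $\out{S}$ only involve primes of $\pi(S)$, so each $r\in\delta$ comes from a field automorphism $\sigma$ of order $r$. The Clifford-theoretic mechanism is also essentially right, modulo a small slip: from ``$\theta$ is not $\sigma$-fixed'' you cannot directly conclude $r\mid [G:I_G(\theta)]$; you need that no $G$-conjugate of $\theta$ is fixed by the order-$r$ subgroup of a Sylow $r$-subgroup of $G/S$ (equivalently, argue with a fixed Sylow $r$-subgroup and replace $\theta$ by a suitable conjugate). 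That is repairable.

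The genuine gap is the step you yourself flag as ``the main obstacle'': for every prime $s\neq p$ that is a vertex of $\Delta(G)$, one must \emph{produce} a $\theta\in\irr{S}$ with $s\mid\theta(1)$ that is moved by $\sigma$ (and, for $s\in\delta$, a character with orbit length divisible by $rs$ under a suitable field automorphism). This existence statement is the entire content of the theorem, and your proposed route does not establish it. In particular, the assertion that ``a character with $s\mid\theta(1)$ must escape the fixed set'' of $\sigma$ is false as stated: plenty of $\sigma$-invariant characters (unipotent characters, for instance, which are fixed by all field automorphisms) have degrees divisible by primes $s\neq p$, so no naive degree comparison with the subfield subgroup $\mathbf{G}^{F_0}$ can rule them out one character at a time. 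What is true, and what the cited source extracts from Moret\'o--Tiep's work, is the existential version, whose proof requires a genuinely detailed, family-by-family analysis of the action of field automorphisms on Lusztig's parametrization (together with low-rank and exceptional cases). Since you defer exactly this to ``direct inspection of the known degree and fusion data,'' the proposal is a plausible strategy outline rather than a proof; the theorem cannot be recovered from what you have written without supplying that core input, which in practice means quoting or reproving the Moret\'o--Tiep results.
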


Given a group $G$, we will denote by $R = R(G)$ the \emph{solvable radical} (i.e. the largest solvable normal subgroup), and
by $K = K(G)$ the \emph{solvable residual} (i.e. the smallest normal subgroup with a solvable factor group) of $G$. Equivalently, $K(G)$ is the last term of the derived series of $G$. 

\begin{lemma}\label{InfiniteCommutator}
Let \(G\) be a group and let \(R\) be its solvable radical. Assume that \(G/R\) is an almost-simple group with socle isomorphic to $\PSL{t^a}$, for a prime \(t\) with \(t^a> 4\) and \(t^a\neq 9\). Then, denoting by \(K\) the solvable residual of $G$, one of the following conclusions holds.

\begin{enumeratei} 
\item \(K\) is isomorphic to \(\PSL{t^a}\) or to \(\SL{t^a}\); 
\item \(K\) has a non-trivial normal subgroup \(L\) such that \(K/L\) is isomorphic to \(\PSL{t^a}\) or to \(\SL{t^a}\), and every non-principal irreducible character of \(L/L'\) is not invariant in \(K\).
\end{enumeratei}
\end{lemma}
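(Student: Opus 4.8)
The plan is to first pin down the structure of $K$, then to choose $L$ suitably and verify the required non-invariance property by a central-extension argument. Since $K$ is the last term of the derived series, it is perfect, and the derived series passes to quotients, so $KR/R = (G/R)^{(\infty)}$. As $G/R$ is almost-simple with socle $S\cong\PSL{t^a}$, the quotient $(G/R)/S$ embeds in $\out S$ and is therefore solvable; since $S$ is perfect this forces $(G/R)^{(\infty)}=S$. Hence, writing $N=K\cap R$, we obtain $K/N\cong KR/R\cong\PSL{t^a}$. The subgroup $N$ is solvable (being contained in $R$) and normal in $K$; moreover it is the solvable radical of $K$, because $K/N$ is non-abelian simple, so no normal subgroup of $K$ strictly between $N$ and $K$ can be solvable. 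In particular $N$ itself satisfies $K/N\cong\PSL{t^a}$.

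Next I would choose $L\nor K$ to be \emph{minimal} among the normal subgroups $M$ of $K$ with $K/M$ isomorphic to $\PSL{t^a}$ or $\SL{t^a}$; this family is non-empty (it contains $N$), so such an $L$ exists. Any such $M$ is solvable, since the unique non-abelian composition factor of $K$ is $\PSL{t^a}$ and it is already accounted for in $K/M$; consequently $L\leq N$. If $L=1$ then $K\cong\PSL{t^a}$ or $K\cong\SL{t^a}$, which is conclusion (i). If $L\neq 1$, it remains to establish the non-invariance statement; as $\irr{L/L'}$ consists exactly of the linear characters of $L$, I must show that no non-principal linear character of $L$ is $K$-invariant, which will place us in conclusion (ii).

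To this end, suppose for a contradiction that $\lambda\in\irr{L}$ is linear, non-principal and $K$-invariant. Then $\ker\lambda$ is normalized by $K$, so $\ker\lambda\nor K$, and $\bar L:=L/\ker\lambda$ is a non-trivial cyclic group. Since $L$ centralizes the abelian section $\bar L$ and $\aut{\bar L}$ is abelian while $K/L$ is perfect, the conjugation action of $K$ on $\bar L$ is trivial; thus $\bar L$ is a non-trivial central cyclic subgroup of the perfect group $\bar K:=K/\ker\lambda$, with $\bar K/\bar L\cong K/L$. Recalling that under our hypotheses ($t^a>4$, $t^a\neq 9$) the universal central extension of $\PSL{t^a}$ is $\SL{t^a}$, whereas $\SL{t^a}$ has trivial Schur multiplier, I reach a contradiction in both cases: if $K/L\cong\SL{t^a}$ then $\bar K$ would be a non-trivial perfect central extension of $\SL{t^a}$, which does not exist; if $K/L\cong\PSL{t^a}$ then $\bar K$ is a non-trivial perfect central extension of $\PSL{t^a}$, forcing $\bar L\cong\Z_2$ and $\bar K\cong\SL{t^a}$, so that $\ker\lambda\subsetneq L$ is a normal subgroup of $K$ with quotient $\SL{t^a}$, contradicting the minimality of $L$. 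Either way we are done, and conclusion (ii) follows.

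I expect the central-extension step of the last paragraph to be the crux: the argument hinges on the precise value of the Schur multiplier of $\PSL{t^a}$ (which is exactly where the exclusions $t^a>4$ and $t^a\neq 9$ enter, ruling out $A_5$-type and $A_6$-type anomalies), combined with the observation that the minimality of $L$ is precisely the device that forbids a non-trivial $K$-invariant linear character of $L$.
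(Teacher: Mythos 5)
Your proof is correct and follows essentially the same route as the paper's: identify $K/(K\cap R)\cong\PSL{t^a}$ via the derived series, then observe that a $K$-invariant non-principal linear character of a normal subgroup produces a central cyclic section of the perfect group $K$, hence a quotient of the relevant Schur multiplier (trivial for $\SL{t^a}$, of order $2$ for $\PSL{t^a}$ under the stated exclusions). The only cosmetic difference is that you package the descent as a minimality argument over normal subgroups with quotient $\PSL{t^a}$ or $\SL{t^a}$, whereas the paper carries out the same descent explicitly in two steps (from $N=K\cap R$ to $L=\ker\mu$, and then stops because the multiplier of $\SL{t^a}$ is trivial).
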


\begin{proof}
Note that \(K\) is clearly non-trivial because \(G\) is non-solvable, so there exists a normal subgroup \(N\) of \(G\) such that \(K/N\) is a chief factor of \(G\). As \(K\) is perfect, it is easy to see that \(KR/NR\cong K/N\) is a non-solvable chief factor of \(G/R\); now, denoting by \(M/R\) the socle of the almost-simple group \(G/R\), we get \(NR=R\) (i.e. \(N=K\cap R\)) and \(KR=M\). Thus, by our hypothesis, \(K/N\cong M/R\) is isomorphic to \(\PSL{t^a}\) and we get conclusion (a) if \(N\) is trivial. 

Therefore we can assume \(N\neq 1\), and we can also assume that there exists a non-principal irreducibe character \(\mu\) of \(N/N'\) such that \(\mu\) is invariant in \(K\) (otherwise we get (b) setting \(L=N\)). 

So, let us define \(L\) to be \(\ker\mu\). We have that \(L\) is a normal subgroup of \(K\) and \(N/L\) is contained in \(\zent{K/L}\); thus, since \(K\) is perfect, we see that \(N/L\) embeds in the Schur multiplier of \(K/N\) (see \cite[Theorem~11.19]{Is}). Under our assumptions, this Schur multiplier is trivial if \(t=2\) and it has order \(2\) if \(t\neq 2\), so, in the present situation, we have \(t\neq 2\), \(|N/L|=2\) and \(K/L\cong\SL{t^a}\). Again we reach conclusion (a) if \(L\) is trivial. But if \(L\neq 1\), taking into accout that the Schur multiplier of \(K/L\) is trivial and arguing as above, then we see that \(L/L'\) does not have any non-principal irreducible character that is invariant in \(K\). We reached conclusion (b) in this case, and the proof is complete.
\end{proof}

Recall that, for $a$ and $n$ integers larger than $1$, a prime divisor $q$ of $a^n-1$ is called a \emph{primitive prime divisor} if $q$ does not divide $a^b -1$ for all $1 \leq b <n$. In this case, $n$ is the order of $a$ modulo $q$, so $n$ divides $q-1$. 
It is known (\cite[Theorem~6.2]{MW}) that $a^n - 1$ always has primitive prime divisors except when $n = 2$ and $a= 2^c -1$ for some integer $c$, or when $n=6$ and $a= 2$. 

\begin{lemma}\label{singer}
Let \(G\) be a group and let \(R\) be its solvable radical. Assume that \(G/R\) is an almost-simple group with socle isomorphic to $\PSL{t^a}$, for a prime \(t\) with \(t^a\geq 4\). Denoting by \(K\) the solvable residual of \(G\), assume that \(L\) is a minimal normal subgroup of \(G\), contained in \(K\), such that \(K/L\cong\SL{t^a}\) acts non-trivially on \(L\). 
Setting \(|L|=q^d\), where \(q\) is a suitable prime, let \(U\) be a Sylow \(u\)-subgroup of \(R\) for an odd prime \(u\) that does not divide \(q^d-1\). 
If there exists a primitive prime divisor \(p\) of \(q^d-1\) such that \(|K/L|\) is a multiple of \(p\), then \(U\subseteq\cent G L\).\end{lemma}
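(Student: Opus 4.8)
The plan is to push everything down to the faithful linear action on $L$ and then combine a fixed‑point count with the observation that the prime $p$ forces the existence of an element acting \emph{irreducibly}. First I would record the basic structure. Since $L$ is a minimal normal subgroup of prime‑power order $q^d$, it is elementary abelian, so $\Gamma:=G/\cent G L$ acts faithfully on $L\cong\F_q^d$ and embeds into $\GL{d}{q}$. As the only non‑trivial quotients of $\SL{t^a}$ are $\SL{t^a}$ and $\PSL{t^a}$, both non‑abelian, a non‑trivial action of $K/L$ cannot be $1$‑dimensional, so $d\ge 2$; because the order of $q$ modulo $p$ equals $d$, we get $d\mid p-1$, whence $p$ is odd. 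I would also check that $\cent G L\le R$: the image of $\cent G L$ in the almost‑simple group $G/R$ is a normal subgroup, and it cannot contain the socle, for otherwise $K$ would lie in $\cent G L\cdot R$ and the non‑solvable group $K/\cent K L$ would embed in the solvable group $R\cent G L/\cent G L$. Hence this image is trivial, and writing $\overline R=R/\cent G L$, $\overline U=U\cent G L/\cent G L$, etc., we have $\Gamma/\overline R\cong G/R$ and $\overline U\in\syl u{\overline R}$.

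Next I would produce a non‑zero fixed space. Since $U$ is a $u$‑group acting on the set $L\setminus\{0\}$ of size $q^d-1$, the number of its fixed points is congruent to $q^d-1$ modulo $u$; as $u\nmid q^d-1$ by hypothesis, this count is non‑zero, so $\cent L U\neq 0$.

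The heart of the matter is to exhibit an element $x\in\Gamma$ of order $p$ that normalizes $\overline U$ and acts irreducibly on $L$. Irreducibility comes for free from the primitive‑prime‑divisor hypothesis: apart from $x-1$, the $\F_q$‑irreducible factors of $x^p-1$ all have degree $d$, so any non‑trivial element of $\GL{d}{q}$ of order $p$ has, over $\F_q$, a single rational canonical block of degree $d$ and therefore leaves invariant no proper non‑zero subspace of $L$. To place such an element inside $\norm\Gamma{\overline U}$, I would invoke the Frattini argument $\Gamma=\overline R\,\norm\Gamma{\overline U}$ (valid since $\overline U$ is a Sylow $u$‑subgroup of the normal subgroup $\overline R$); this shows $\norm\Gamma{\overline U}$ maps onto $\Gamma/\overline R\cong G/R$. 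Because the socle $\PSL{t^a}$ has order divisible by the odd prime $p$, we obtain $p\mid|G/R|$ and hence $p\mid|\norm\Gamma{\overline U}|$, so Cauchy yields an $x\in\norm\Gamma{\overline U}$ of order exactly $p$. Being a non‑trivial element of $\Gamma\le\GL{d}{q}$, it acts irreducibly. Now $x$ normalizes $\overline U$, so the subspace $\cent L U=\cent L{\overline U}$ is $x$‑invariant; irreducibility together with $\cent L U\neq 0$ forces $\cent L U=L$, i.e. $\overline U$ centralizes $L$. In the faithful action this means $\overline U=1$, which is exactly $U\le\cent G L$.

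I expect the main obstacle to be the third paragraph: it is not obvious a priori that one can find a single $p$‑element that \emph{simultaneously} normalizes the prescribed Sylow subgroup $U$ and acts irreducibly on $L$. The resolution is to decouple the two requirements — irreducibility holds automatically for every non‑trivial $p$‑element thanks to the primitive‑prime‑divisor property, while the Frattini argument guarantees that such elements are already present in $\norm\Gamma{\overline U}$. The remaining ingredients (the fixed‑point count and the reduction $\cent G L\le R$) are routine, and, pleasantly, the whole argument never needs a coprimality assumption between $u$ and $q$.
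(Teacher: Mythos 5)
Your proof is correct, and it takes a genuinely different route from the paper's. The paper argues on the group side: using $[K,R]\le K\cap R$ together with the fact that $K/L$ is perfect with centre of order at most $2$ (this is where the oddness of $u$ enters), it shows that $U$ centralizes $K/L$, so the image of $U$ in $\GL{d}{q}$ lies in the centralizer of a subgroup of order $p$ of the image of $K$; by the Singer-subgroup theorem \cite[II, 7.3]{Hu} and Schur's lemma that centralizer is cyclic of order $q^d-1$, which is coprime to $u$. You argue instead on the module side: the orbit count modulo $u$ gives $\cent LU\neq 0$, the Frattini argument places an element $x$ of order $p$ inside $\norm{\Gamma}{\overline U}$, and the primitive-prime-divisor property forces every element of order $p$ in $\GL{d}{q}$ to act irreducibly, so the $x$-invariant subspace $\cent LU$ must equal $L$. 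All the individual steps check out (in particular $d\ge 2$ because $\SL{t^a}$ has no non-trivial abelian quotient, $p$ is odd because $d\mid p-1$, and $p$ divides $|\PSL{t^a}|$ and hence $|G/R|$, so Cauchy applies in $\norm{\Gamma}{\overline U}$). Your version dispenses with the Singer-cycle centralizer computation and with the structural input $[K,R]\le K\cap R$, and indeed never uses that $u$ is odd; the paper's version, on the other hand, works for an arbitrary $u$-subgroup of $R$ (it does not need $U$ to be a full Sylow subgroup, which your Frattini step does require) and yields the slightly stronger conclusion that the image of $U$ in $\GL{d}{q}$ embeds in a cyclic group of order $q^d-1$.
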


\begin{proof}
Set \(C=\cent G L\). Since \(L\) is an elementary abelian \(q\)-group of order \(q^{d}\), the factor group \(G/C\) embeds in \({\rm{GL}}_{d}(q)\). Denoting by \(S/C\) a subgroup of \(KC/C\) such that \(|S/C|=p\), by \cite[Theorem~2.11]{He} we see that \(S/C\) is contained in a Singer subgroup of \({\rm{GL}}_{d}(q)\): this is a cyclic group of order \(q^{d}-1\), which is a maximal cyclic subgroup of \({\rm{GL}}_{d}(q)\), and which acts fixed-point freely on \(L\). Now \(S/C\) acts irreducibly on \(L\), thus, by Schur's Lemma and by the fact that the ring of endomorphisms of \(L\) that commute with the action of \(S/C\) is a finite field, \(\cent{{\rm{GL}}_{d}(q)}{S/C}\) is a cyclic group and it is then forced to have order \(q^{d}-1\). 
On the other hand, setting \(N/L=\zent{K/L}\) and observing that \(N=K\cap R\) (so, \([K/N,R/N]=1\)), by coprimality we have \(K/N\cong(K/L)/(N/L)=\cent{K/L}{U}/(N/L)\), whence \(K/L=\cent{K/L}{U}\). In particular, as \(L\subseteq C\), the factor group \(UC/C\) centralizes \(KC/C\), which forces \(U\subseteq C\) by the discussion above. The proof is complete.
\end{proof}

The above result can be applied when \(L\) is isomorphic to the natural module for \(K/L\cong\SL{t^a}\) as far as \(t^{2a}-1\) has a primitive prime divisor. Nevertheless, we remark that the conclusion of Lemma~\ref{singer} is true in this situation even if \(t^{2a}-1\) does not have any primitive prime divisor, i.e. if \(a=1\) and \(t\) is a Mersenne prime, or if \(t^{2a}=2^6\): in the former case, in fact, the \(u\)-part of \(|KC/C|\) already exhausts the full \(u\)-part of \(|{\rm{GL}}_{2}(t)|\) because \(u\) does not divide \(|{\rm{GL}}_{2}(t):\SL{t}|=t-1\), therefore a Sylow \(u\)-subgroup of \(R\) is forced to be contained in~\(\cent G L\). In the latter case, we observe that \({\rm{GL}}_{6}(2)\) has a unique conjugacy class of elements of order \(2^3+1\); therefore, defining \(S/C\) to be a subgroup of order \(9\) of \(G/C\), the action of \(S/C\) on \(L\) is again fixed-point free and obviously irreducible, so the previous argument goes through.

For later use, we stress that Lemma~\ref{singer} also applies when \(K/L\cong\SL{13}\) and \(L\) is isomorphic to an irreducible \(K/L\)-module of dimension \(6\) over \(\F_3\), because \(7\) is a primitive prime divisor of \(3^6-1\) dividing \(|\SL{13}|\).

\begin{theorem}{\cite[Theorem~3.15]{DPSS}}\label{0.2}
Let \(G\) be a non-solvable group such that \(\Delta(G)\) is connected and it has a cut-vertex \(p\). Then, denoting by \(R\) the solvable radical of \(G\), we have that \(G/R\) is an almost-simple group and \(\V G=\pi(G/R)\cup\{p\}\). 
\end{theorem}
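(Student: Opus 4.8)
The plan is to push everything down to the quotient $\overline{G}=G/R$, which has trivial solvable radical, and to analyse its socle, a direct product $T_1\times\cdots\times T_k$ of non-abelian simple groups. Two things have to be established: that $k=1$ and that the socle is self-centralizing (so that $\overline{G}$ is almost-simple), and that $\V{G}=\pi(G/R)\cup\{p\}$. I would first record that every prime dividing the order of the socle is a vertex of $\Delta(G)$. Since $\cd{\overline{G}}\subseteq\cd{G}$ we have $\V{\overline{G}}\subseteq\V{G}$; and since a non-abelian simple group has no abelian normal Sylow subgroup, Ito--Michler's theorem gives $\pi(T_i)\subseteq\V{T_i}$ for every $i$. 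As each irreducible character of a normal subgroup divides the degree of some irreducible character of the overgroup, these primes survive, so that $\pi(T_1\times\cdots\times T_k)\subseteq\V{G}$.

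The next step is almost-simplicity. The key remark is that, choosing for each $i$ a character $\theta_i\in\irr{T_i}$ of degree divisible by a prescribed prime of $\pi(T_i)$, the external product $\theta_1\times\cdots\times\theta_k$ lifts to a character of $\overline{G}$, hence of $G$, whose degree is a multiple of $\prod_i\theta_i(1)$. Letting the $\theta_i$ vary, this shows that in the subgraph of $\Delta(G)$ induced on $\bigcup_i\pi(T_i)$ every prime of $\pi(T_i)$ is joined to every prime of $\pi(T_j)$ for $i\neq j$; since each $T_i$ has at least three prime divisors (Burnside) and each part is non-empty, this induced subgraph has no cut-vertex as soon as $k\geq 2$ — indeed, deleting any single vertex leaves an intact part to which all the remaining primes are completely joined. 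Combining this with Theorem~\ref{MoretoTiep}, which forces the outer primes $\pi(G/R)-\pi(T_1\times\cdots\times T_k)$ to be adjacent to (almost) all socle primes, one sees that deleting $p$ cannot split off any vertex of $\pi(G/R)$; the cut-vertex could only separate primes of the solvable radical, and a closer look (here the component structure of $\Delta(S)$ recorded in Lemma~\ref{PSL2bis} and a Clifford-theoretic analysis over $R$ enter) rules out $k\geq 2$. With $k=1$ the socle is a single non-abelian simple group $S$; since the centralizer of the socle is trivial in a group with trivial solvable radical, $\overline{G}$ embeds in $\aut{S}$ and is almost-simple.

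Finally I would compute the vertex set. From the first step $\pi(S)\subseteq\V{G}$, while Theorem~\ref{MoretoTiep} shows that each outer prime of $\pi(G/R)-\pi(S)$ is a vertex (adjacent to every non-characteristic vertex); together with $p\in\V{G}$ this gives $\pi(G/R)\cup\{p\}\subseteq\V{G}$. For the reverse inclusion I would take a vertex $q\notin\pi(G/R)$, so that $q$ divides $|R|$ but not $|G/R|$, and argue $q=p$. The primes of $\pi(G/R)$ and their adjacencies — coming from $S$ via Lemma~\ref{PSL2bis} and from the action of $\overline{G}/S$ via Theorem~\ref{MoretoTiep} — already determine a connected graph from which $p$ is the only admissible cut-vertex; a second ``radical'' vertex different from $p$ would then either disconnect $\Delta(G)$ outright or create a second cut-vertex, contrary to hypothesis. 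This forces $\V{G}=\pi(G/R)\cup\{p\}$.

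The hardest part is the almost-simplicity step, and within it the control of the solvable radical. The product-of-degrees argument cleanly shows that the socle primes cannot be separated by deleting a single vertex, but converting this into a genuine contradiction for $k\geq 2$ — and, in the vertex-set step, forcing the unique extra vertex to coincide with $p$ — requires a careful analysis, through Clifford theory over $R$, of exactly how the primes of $R$ attach to $\Delta(G)$. That interplay between the rigid component structure of $\Delta(S)$ and the action on the radical is the real technical core of the argument.
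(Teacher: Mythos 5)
First, a remark on provenance: the paper does not prove this statement at all --- it is imported verbatim from \cite[Theorem~3.15]{DPSS} --- so there is no internal argument to compare against, and your attempt has to stand on its own. It does not, because the two decisive steps are exactly the ones you defer. (1)~For almost-simplicity: your product-of-characters argument correctly shows that the subgraph induced on the socle primes $\bigcup_i\pi(T_i)$ is $2$-connected when $k\geq 2$, but this is not yet a contradiction, since the cut-vertex $p$ could separate a vertex of $\V G-\pi(G/R)$ (a ``radical'' prime) from the rest; you acknowledge this and appeal to ``a closer look'' via Clifford theory over $R$, but that closer look is the entire content of the theorem and is absent. Moreover, Theorem~\ref{MoretoTiep} is stated only for almost-simple groups, so invoking it before $k=1$ is established is circular. (2)~For the inclusion $\V G\subseteq\pi(G/R)\cup\{p\}$: the claim that a second radical vertex $q\neq p$ ``would create a second cut-vertex, contrary to hypothesis'' is not valid --- the hypothesis asserts only that $p$ \emph{is} a cut-vertex, not that it is the unique one (uniqueness is a conclusion of the Main Theorem, established later) --- and there is no a priori reason such a $q$ should disconnect anything: it could perfectly well be a complete vertex of $\Delta(G)$.

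What is actually needed at both points is external input of the kind the authors import from \cite{ACDPS}: adjacency results guaranteeing that a vertex $q$ of $\Delta(G)$ with $q\nmid |G/R|$ is adjacent to (essentially all of) the primes of the non-abelian composition factors, because non-adjacency of two vertices forces solvability constraints that are violated by a non-abelian simple composition factor of order divisible by one of them. Combined with a case analysis of the graphs $\Delta(S)$ for \emph{arbitrary} simple $S$ --- note that the statement concerns general non-solvable $G$, whereas your sketch leans on Lemma~\ref{PSL2bis}, which only covers $\PSL{t^a}$ --- one concludes that every vertex outside $\pi(G/R)$ is adjacent to everything reachable after deleting $p$, so that a second such vertex distinct from $p$ would leave $\Delta(G)-p$ connected; that is the shape of the argument you would need to write down, and it is the part missing from your proposal.
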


As an important reference for our discussion, we recall here the characterization of the finite non-solvable groups whose degree graph has two connected components, provided by M.L. Lewis and D.L. White in \cite{LW}. We will discuss the relationship between this and our main result in the last section of the present paper.

\begin{theorem}{\cite[Theorem~6.3]{LW}}
\label{LewisWhite}
Let \(G\) be a non-solvable group. Then \(\Delta(G)\) has two connected components if and only if there exist normal subgroups \(N\subseteq K\) of \(G\) such that, setting \(C/N=\cent{G/N}{K/N}\), the following conditions hold.
\begin{enumeratei}
\item \(K/N\simeq\PSL{t^a}\), where \(t\) is a prime with \(t^a\geq 4\).
\item \(G/K\) is abelian.
\item If \(t^a\neq 4\), then \(t\) does not divide \(|G/CK|\).
\item If \(N\neq 1\), then either \(K\cong\SL{t^a}\) or there exists a minimal normal subgroup \(L\) of \(G\) such that \(K/L\cong\SL{t^a}\) and \(L\) is isomorphic to the natural module for \(K/L\).
\item If \(t=2\) or \(t^a=5\), then either \(CK\neq G\) or \(N\neq 1\).
\item If \(t=2\) and \(K\) is an in {\rm{(d)}} in the case \(K\not\cong\SL{t^a}\), then every non-principal character in \(\irr L\) extends to its inertia subgroup in \(G\).
\end{enumeratei}
\end{theorem}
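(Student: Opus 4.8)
The plan is to prove the two implications separately, relying throughout on the classification of finite simple groups and on the explicit description of $\Delta(\PSL{t^a})$ and $\Delta(\SL{t^a})$ recorded in Lemma~\ref{PSL2bis}. For the ``if'' direction I would assume the structural data $N\subseteq K$ with properties (a)--(f) and compute $\Delta(G)$ directly. Writing $C$ for the preimage in $G$ of $\cent{G/N}{K/N}$, every irreducible character of $G$ lies over some character of $K$ and, via Clifford theory together with Gallagher's theorem, its degree is assembled from three ingredients: a degree of a character of $K/N\cong\PSL{t^a}$, whose adjacencies are governed by Lemma~\ref{PSL2bis}; a contribution from the \emph{abelian} quotient $G/K$ (condition (b)); and, when $N\neq 1$, a contribution from the characters lying over non-principal characters of $N$. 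The task is to verify that these extra contributions add no edge joining the characteristic $t$ to the primes dividing $(t^a-1)(t^a+1)$, so that exactly two components survive. Condition (c) keeps $t$ from being linked to the remaining primes through the image of $G/CK$ in $\out{\PSL{t^a}}$; here Theorem~\ref{MoretoTiep} shows that the ``new'' primes coming from automorphisms attach to every vertex except the characteristic, hence land in the non-characteristic component. Conditions (d) and (f) pin down the module $L$ (the natural module, or $K\cong\SL{t^a}$) so that the characters lying over $N$ do not merge the two components, while condition (e) ensures, in the three-component situations $t=2$ and $t^a=5$ of Lemma~\ref{PSL2bis}, that the available solvable structure ($CK\neq G$ or $N\neq 1$) fuses exactly two of the three components of $\Delta(K/N)$, leaving two in all.

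For the ``only if'' direction I would start from a non-solvable $G$ with $\Delta(G)$ disconnected and first reduce to a unique non-abelian chief factor. Taking $K$ to be the last term of the derived series of $G$ and choosing $N\nor G$ with $K/N$ a non-solvable chief factor (as in the proof of Lemma~\ref{InfiniteCommutator}), the disconnectedness of $\Delta(G)$, combined with the classification of the simple groups having disconnected degree graph, should force $K/N$ to be such a simple group; the requirement that $\Delta(G)$ have \emph{exactly two} components then rules out the remaining candidates and singles out $\PSL{t^a}$, yielding (a). Condition (b) is forced by the two-component requirement, a non-abelian $G/K$ contributing degrees that reconnect the two components; condition (c) comes from determining precisely when field or diagonal automorphisms in $G/CK$ produce a degree divisible both by $t$ and by another prime; and (e), (f) by re-examining the cases where $\Delta(K/N)$ has three components.

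The genuine obstacle is condition (d). When $N\neq 1$ one shows, exactly as in Lemma~\ref{InfiniteCommutator}, that $N/\ker{\mu}$ embeds in the Schur multiplier of $K/N$, which forces $K\cong\SL{t^a}$ or the existence of a minimal normal $L$ with $K/L\cong\SL{t^a}$; the hard point is to prove that $K/L$ acts on $L$ as on the natural module. Ruling out every other $\SL{t^a}$-module, by showing that a non-natural module would produce an irreducible character of $G$ whose degree is divisible both by $t$ and by a prime in $\pi((t^a-1)(t^a+1))$ and thereby reconnect the two components, is where delicate orbit- and degree-counting arguments enter, and is the step I expect to absorb most of the effort.
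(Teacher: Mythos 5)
First, a point of order: the paper you are looking at does not prove this statement at all. It is quoted verbatim as \cite[Theorem~6.3]{LW} (together with its corrigendum \cite{LWc}) and used as an external input; there is no internal proof to compare your proposal against. So the only meaningful question is whether your sketch would stand on its own as a proof of the Lewis--White theorem, and as written it would not: it is a roadmap that defers precisely the steps that carry the mathematical content. You say yourself that establishing condition (d) --- that a non-trivial $L$ must be the natural module for $K/L\cong\SL{t^a}$ --- ``is where delicate orbit- and degree-counting arguments enter,'' but you give no indication of how those arguments go. This is not a routine verification: one must show that for every non-natural irreducible $\SL{t^a}$-module $V$ there is a $v\in V$ whose stabilizer fails to contain a normal Sylow $t$-subgroup, which is exactly the content of the $\nq$-classification recorded in this paper as Lemma~\ref{SL2Nq} (and whose analogue occupies Section~3 here, Theorems~\ref{TipoIeIIPieni} and~\ref{TipoIeII}, with Brauer-character dimension bounds, regular-orbit theorems, and explicit GAP computations). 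Note also that the $\nq$-analysis produces genuine exceptional modules (e.g.\ $(t^a,|V|)=(13,3^6)$ in Lemma~\ref{SL2Nq}(b)), so the claim that only the natural module survives requires ruling these out by a further degree computation, not merely by the covering condition.

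A second, smaller gap: in the ``if'' direction you assert that condition (e) ensures the solvable structure ``fuses exactly two of the three components'' of $\Delta(K/N)$ when $t=2$ or $t^a=5$, but you do not say which character degrees realize the fusing edge, nor why conditions (c) and (f) prevent an edge from simultaneously attaching the characteristic $t$ to the other block (which would leave one component, not two). The bookkeeping of which primes in $\pi(G/R)-\pi(K/N)$ attach where does follow the pattern of Theorem~\ref{MoretoTiep}, as you indicate, but the extension/non-extension dichotomy for characters of $L$ in condition (f) (Gallagher versus genuine character-triple degree growth) is where the edge set is actually decided, and your sketch does not engage with it. In short: the architecture of your argument matches the Lewis--White strategy, but every load-bearing wall is marked ``to be built.''
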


We recall that if $G$ satisfies the hypotheses of Theorem~\ref{LewisWhite}, then the solvable radical \(R\) of \(G\) coincides with \(C\) and \(\V G=\pi(G/R)\) (see \cite[Remark~3.8]{DPSS}). 

For later use, we also note that if a group \(K\) has a normal subgroup \(L\) such that \(K/L\cong\SL{t^a}\) for an odd prime $t$ (with $t^a\geq 5$) and \(L\) is isomorphic to the natural \(K/L\)-module, then by the previous theorem $\Delta(K)$ has two connected components. In particular, it is easy to check that \(\Delta(K)\) in this case has \(\{t\}\) as one connected component, whereas all the other vertices are pairwise adjacent.

\section{Some orbit theorems}

In this third preliminary section, we focus on some module actions of \(2\)-dimensional  special linear groups that will be crucial for our discussion. The main results of the section are Theorem~\ref{TipoIeIIPieni}, which deals with irreducible modules for \(\SL{t^a}\) in cross characteristic, and Theorem~\ref{TipoIeII} concerning actions on \(t\)-groups. We  will also prove another result on orbit sizes in this kind of linear actions (Lemma~\ref{brauer}), that will turn out to be useful. 

To begin with, we recall some special types of actions of groups on modules. 
Let \(H\) and \(V\) be groups, and assume that \(H\) acts by automorphisms on \(V\). Given a prime number  \(q\), we say that \emph{the pair \((H,V)\) satisfies the condition \(\nq\) } if  $q$ divides $|H: \cent HV|$ and, for every non-trivial \(v\in V\), there exists a Sylow \(q\)-subgroup \(Q\) of \(H\) such that \(Q\trianglelefteq \cent H v\) (see \cite{C}). If $(H, V)$ satisfies $\nq$ then \(V\) turns out to be an elementary abelian \(r\)-group for a suitable prime \(r\), and \(V\) is in fact an \emph{irreducible} module for \(H\) over the field $\mathbb{F}_r$ (see~Lemma~4 of~\cite{Z}). 

For \(H\cong\SL{t^a}\), we have the following result.

\begin{lemma}{\cite[Lemma~3.10]{DPSS}} \label{SL2Nq}
  Let $t$, $q$, $r$  be prime numbers, let $H = {\rm SL}_2(t^a)$ (with $t^a \geq 4$)  and  let  $V$ be an $H$-module over the field $\mathbb{F}_r$.
  Then $(H, V)$ satisfies $\nq $ if and only if 
 either $t^a = 5$ and $V$ is the natural module for $H/\cent HV \cong {\rm SL}_2(4)$ or $V$ is faithful and 
  one of the following holds.
  \begin{enumeratei}
  \item $t = q = r$ and $V$ is the natural $\mathbb{F}_r[H]$-module (so $|V| = t^{2a}$); 
  \item $q = r = 3$ and $(t^a, |V|) \in \{(5, 3^4), (13, 3^6)\}$. 
  \end{enumeratei}
\end{lemma}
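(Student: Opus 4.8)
The plan is to handle the two implications separately. The \emph{if} part is a direct verification: in each listed configuration one checks that every non-trivial $v\in V$ is fixed by a full Sylow $q$-subgroup that is normal in its stabilizer. For the natural module of case~(a) the stabilizer in $H$ of a non-zero vector is exactly the group of unipotent upper-triangular matrices, which \emph{is} a Sylow $t$-subgroup; transitivity of $H$ on $V-\{0\}$ then gives $\nq$ for $q=t$. For the two modules of case~(b) and for the exceptional $\SL 4$-module, the relevant $q$-element (here $q=r=3$, resp.\ $q=r=2$) acts on $V$ as a unipotent transformation whose fixed space has exactly the dimension needed so that the $H$-conjugates of $\cent V Q$ cover $V-\{0\}$; a short orbit-length count then confirms the normality of $Q$ in each point stabilizer.

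For the \emph{only if} part I would first pass to $\o H:=H/\cent H V$. Since $\nq$ already forces $V$ to be an irreducible $\F_r[H]$-module, $\o H$ acts faithfully and irreducibly; as the proper normal subgroups of $H=\SL{t^a}$ are contained in $\zent H$ and $q$ divides $|H:\cent H V|$, we get $\o H\cong\SL{t^a}$ or $\o H\cong\PSL{t^a}$. The decisive observation is that $Q\nor\cent H v$ yields $\cent{\o H}v\sbs\norm{\o H}{\o Q}$ for a Sylow $q$-subgroup $\o Q$ of $\o H$; hence every $\o H$-orbit on $V-\{0\}$ has length prime to $q$ and, equivalently, $V-\{0\}$ is covered by the $\o H$-conjugates of $\cent V{\o Q}$. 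Writing $n=\dim_{\F_r}V$ and $d=\dim_{\F_r}\cent V{\o Q}$, this produces the counting inequality
\[
r^n-1\ \le\ |\o H:\norm{\o H}{\o Q}|\,(r^d-1),
\]
which is the engine of the whole argument.

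The core is then a case analysis according to the position of $q$ relative to $t$, in which Remark~\ref{Subgroups} (and its $\SL{}$-analogue) determines $\norm{\o H}{\o Q}$, hence the index on the right, while the representation theory of $\SL{t^a}$ controls $d$. When $q=t$ one first shows $r=t$ (cross-characteristic modules being too large-dimensional to satisfy the inequality), and since the fixed space of the full unipotent radical $\o Q$ (of index $t^a+1$) on a defining-characteristic irreducible module is one-dimensional over the field of definition, the inequality becomes an equality precisely for the natural module, giving conclusion~(a). When $q$ is odd and divides $t^a-1$ or $t^a+1$, the subgroup $\o Q$ is cyclic with a normalizer of type~(i), so the index has order of magnitude $t^{2a}$; inserting the admissible fixed-space dimensions of a semisimple $q$-element acting in cross characteristic, together with the list of low-dimensional faithful irreducible $\F_r[\SL{t^a}]$-modules, leaves only the configurations $(t^a,|V|)\in\{(5,3^4),(13,3^6)\}$, in which necessarily $q=r=3$. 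Finally, the possibility $q=2$ is dispatched using that the central involution $-I$ of $\SL{t^a}$ acts fixed-point-freely on any faithful module over a field of odd characteristic; thus a Sylow $2$-subgroup fixes no non-trivial vector unless $V$ is not faithful, which forces $\o H\cong\PSL{t^a}$ and $r=2$, and the exceptional isomorphism $\PSL 5\cong\SL 4$ then singles out the $\SL 4$ natural module.

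The main obstacle is the cross-characteristic analysis: ruling out all but the two exceptional modules requires precise control both of $d=\dim\cent V{\o Q}$ for a semisimple $q$-element and of the dimensions of the faithful irreducible $\F_r[\SL{t^a}]$-modules (in effect, of their Brauer characters), so that the counting inequality can be shown to fail for all sufficiently large $t^a$, leaving finitely many small cases to be checked by hand. A secondary subtlety is that the inequality only records the containment $Q\le\cent H v$; in the surviving configurations one must still establish the full normality $Q\nor\cent H v$, which is achieved by showing that the relevant point stabilizers possess a unique, hence normal, Sylow $q$-subgroup.
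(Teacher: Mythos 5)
A preliminary remark: this paper does not prove the statement --- it is imported verbatim as \cite[Lemma~3.10]{DPSS} --- so there is no internal proof to measure yours against. The nearest relatives inside the paper are Lemma~\ref{brauer2} and Theorem~\ref{TipoIeIIPieni}, and your overall strategy (reduce to a faithful irreducible module for $\o H=H/\cent HV$, cover $V-\{0\}$ by the conjugates of $\cent V{\o Q}$, and test the resulting counting inequality against centralizer dimensions read off from the Brauer/ordinary character table of $\SL{t^a}$, leaving a finite list to be settled directly) is precisely the engine used there. As a plan it is the right one: the ``if'' direction, the defining-characteristic branch $q=t$ (where equality in the count does single out the natural module), and the odd cross-characteristic branch are all correctly set up, with the hard quantitative work deferred but identified.

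The one step that would fail as written is the $q=2$ branch. From the fixed-point-freeness of $-I$ on a faithful module in odd characteristic you may conclude only that \emph{either} $V$ is unfaithful \emph{or} $r=2$; unfaithfulness does not ``force $r=2$'', and in both surviving subcases ($\o H\cong\PSL{t^a}$ acting over an arbitrary prime $r$, and $H$ acting faithfully over $\F_2$) the dihedral, respectively generalized quaternion, Sylow-$2$ analysis still has to be carried out --- invoking $\PSL 5\cong\SL 4$ names the answer but does not derive it. This branch is also where the counting inequality is least decisive: it is \emph{satisfied} by configurations that must nevertheless be excluded (for instance $\PSL 7$ on its $3$-dimensional $\F_2$-module, where $7-1\le 21\cdot(2^d-1)$ holds for any $d\ge 1$, yet every point stabilizer is $S_4$ and therefore contains, but does not normalize, a Sylow $2$-subgroup), so here the exclusion must come from the normality requirement in $\nq$ rather than from the covering count. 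Apart from this, your proposal is a correct outline rather than a complete argument: the bounds on $\dim_{\F_r}\cent V{\o Q}$ and the terminal finite check (by hand or by machine, as in Lemma~\ref{brauer2} and Theorem~\ref{TipoIeIIPieni}) are exactly the content that remains to be supplied.
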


Theorem~\ref{TipoIeIIPieni} is introduced by the following Lemma~\ref{brauer2}. Note that the numbers indicated in Table~\ref{Brauer3} of this lemma are the \emph{possible} dimensions of the relevant modules (only some of them actually show up).

\begin{lemma}\label{brauer2}
Let \(V\) be an irreducible module for \(G=\SL{t^a}\) over the field \(\F_q\), where \(t^a\geq 4\) and \(q\) is a prime in \(\pi(G)-\{t\}\). Also, let \(\F\) be a finite-degree field extension of \(\F_q\) such that \(\F\) is a splitting field for \(G\) and all its subgroups, and let \(\ell\) denote the composition length of the \(\F[G]\)-module \(V\otimes \F\). Then, for \(T\in\syl t G\), the maximal dimensions of \(\cent V T\) over \(\F_q\) (depending on \(\dim_{\F_q} V\)) are listed in Table~\ref{Brauer3}. 
\begin{table}[htbp]
\caption{\label{Brauer3} Maximal dimension of the centralizers of \(T\) in \(V\).}
\renewcommand{\arraystretch}{.95}
$$
\begin{array}{llllll} \hline\hline\\ 
\dim_{\F_q} V\quad\qquad\quad& {\ell\cdot t^a}\quad\quad & \ell\cdot(t^a+1) \quad\quad& \ell\cdot(t^a-1)\quad\quad& \ell\cdot\left(\dfrac{t^a+1}{2}\right)\quad\quad& \ell\cdot\left(\dfrac{t^a-1}{2}\right)\\ 
&&&& \text{\rm{(for }}t\neq 2)&  \text{\rm{(for }}t\neq 2) \\ \\
\hline
\\ 
\dim_{\F_q}\cent V T & \ell & 2\ell & 0 & \ell & 0 \\  \\    
\hline\hline
\end{array}
$$
\end{table}
\end{lemma}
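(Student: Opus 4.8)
The goal is to compute, for $V$ an irreducible $\F_q[G]$-module with $G=\SL{t^a}$ and $q\neq t$, the maximal possible dimension of the fixed-point space $\cent V T$ for $T$ a Sylow $t$-subgroup (which is elementary abelian of order $t^a$, a maximal unipotent subgroup). The natural strategy is to pass to a splitting field and reduce everything to a Brauer-character computation, then descend. Let me sketch how I would carry this out.

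First I would extend scalars to $\F$, writing $V_\F = V\otimes_{\F_q}\F$. Since $\F$ is a splitting field for $G$ and all its subgroups, $V_\F$ decomposes as a direct sum of $\ell$ absolutely irreducible $\F[G]$-modules (these are Galois-conjugate to one another, since $V$ is $\F_q$-irreducible), all of the same $\F$-dimension $\dim_\F V_\F/\ell = \dim_{\F_q} V/\ell$. Because $T$ is a $t$-group and $q\neq t$, the fixed-point functor is exact and compatible with field extension, so $\dim_{\F_q}\cent V T = \dim_\F \cent{V_\F}{T}$, and this in turn is the sum of the dimensions of the $T$-fixed spaces of the $\ell$ absolutely irreducible constituents. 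Thus it suffices to compute $\dim_\F \cent W T$ for a single absolutely irreducible $\F[G]$-module $W$ in cross characteristic, whose $\F$-dimension is one of $t^a,\ t^a\pm 1,\ (t^a\pm1)/2$; multiplying the answer by $\ell$ then gives the entries of the table.

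The heart of the matter is the computation of $\dim_\F\cent W T$ for each family of irreducible $\SL{t^a}$-modules in characteristic $q\neq t$. Here $\dim_\F\cent W T$ equals the multiplicity of the trivial $\F[T]$-module in $W|_T$, which (since $|T|=t^a$ is coprime to $q$) equals the inner product $\frac{1}{|T|}\sum_{x\in T}\chi_W(x)$, where $\chi_W$ is the Brauer character of $W$ restricted to the $t'$-elements of $T$ — but $T$ is a $t$-group, so only the identity contributes and $\dim_\F\cent W T = \chi_W(1)/|T|$ computed as an ordinary fixed-space count via the values of the Brauer character on $T$. The cleanest route is to use the known generic decomposition matrices / Brauer character tables of $\SL{t^a}$ in cross characteristic (as in Burkhardt or the treatment underlying \cite{W}): the irreducible cross-characteristic modules come from the ordinary irreducibles of degrees $1$, $t^a$, $t^a\pm 1$, $(t^a\pm1)/2$ by reduction mod $q$, and the value of each such Brauer character on a nontrivial unipotent element is small and explicit. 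Averaging these values over $T$ yields the fixed-space dimension. I would then read off: the Steinberg-type module of $\F$-dimension $t^a$ gives $\dim\cent W T = 1$; the degree-$(t^a+1)$ (principal-series) modules give $2$; the degree-$(t^a-1)$ (cuspidal) modules give $0$; and the two half-sized modules of degrees $(t^a\pm1)/2$ give $1$ and $0$ respectively — matching the table after multiplying by $\ell$.

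The main obstacle I expect is the bookkeeping in cross characteristic: unlike ordinary characteristic, the reduction mod $q$ of an ordinary irreducible need not stay irreducible, so the ``irreducible $\F_q$-module of a given dimension'' may be a composition factor of a reduced ordinary module rather than the module itself, and the fixed-space dimension must be tracked through the decomposition matrix. In particular I must argue that for each listed $\F_q$-dimension the extreme (maximal) value of $\dim\cent W T$ is achieved by the constituent whose Brauer character on unipotent elements is as indicated, which requires knowing the unipotent-support of the relevant Brauer characters of $\SL{t^a}$; this is where the explicit representation theory of $\SL 2$ over finite fields (the split/non-split torus dichotomy governing whether the module is in the principal or cuspidal series, and hence whether $\cent W T$ is $2$-, $1$-, or $0$-dimensional) does the real work. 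Once the single-constituent values are pinned down, the statement for general $V$ follows by multiplying by the composition length $\ell$, and the qualification ``maximal'' accounts for the fact that not every $\F_q$-dimension in the table is realized by an actual irreducible module.
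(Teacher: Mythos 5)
Your proposal follows essentially the same route as the paper: pass to a splitting field (fixed-point dimensions under the \(t\)-group \(T\) being stable under scalar extension since \(q\neq t\)), reduce to a single absolutely irreducible constituent and multiply by \(\ell\), and then compute the multiplicity of the trivial \(\F[T]\)-module via the Brauer character of \(W\), which lifts to an ordinary irreducible character of \(G\) whose restriction to \(T\) can be read off the known character table. One small slip: the Brauer character in characteristic \(q\) is defined on the \(q\)-regular (not \(t'\)) elements, so \emph{every} element of \(T\) contributes to the average \(\frac{1}{|T|}\sum_{x\in T}\chi_W(x)\) --- your later sentence about ``averaging these values over \(T\)'' is the correct statement, and it is what the paper does.
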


\begin{proof}
We start by claiming that, for any field extension \(\K\) of \(\F_q\), we have \(\dim_{\K}\cent{V\otimes\K}{T}=\dim_{\F_q}\cent V T\). In fact, denote by \(U\) an \(\F_q[T]\)-submodule of \(V\) that is a direct complement for \(\cent V T\) in \(V\) (such a submodule \(U\) exists by Maschke's theorem); then, the \(\K[T]\)-module \(V\otimes \K\) decomposes as \((\cent T V\otimes \K)\oplus (U\otimes \K)\), and our claim easily follows from the fact  that the $1$-dimensional trivial \(\K[T]\)-module is not an irreducible constituent of \(U\otimes\K\) (this is ensured, for instance, by Lemma~1.12 in \cite[VII]{HB}). 

Now, \cite[VII, Theorem~2.6b)]{HB} yields the existence of a field \(\F\) as in our hypothesis. Since \(V\otimes\F\) is the direct sum of Galois conjugates of a suitable irreducible \(\F[G]\)-module \(W\), we get \(\dim_{\F}(V\otimes\F)=\ell\cdot \dim_{\F}W\) and \(\dim_{\F}\cent{V\otimes\F}{T}=\ell\cdot \dim_{\F}\cent W T\); therefore we will assume (absolute) irreducibility for the \(\F[G]\)-module \(V\otimes\F\) and the general bounds concerning the dimension of \(\cent V T\) over \(\F_q\) will follow at once. Given that, if we extend the field further to the algebraic closure \(\overline{\F}\) of \(\F\), then the module \(V\otimes{\overline{\F}}\) clearly remains irreducible.

Let \(\mathfrak{T}\) be an \(\overline{\F}\)-representation associated with \(V\otimes\overline{\F}\); by Maschke's theorem and by the fact that \(\overline{\F}\) is a splitting field for \(T\), the restriction \(\mathfrak{T}_T\) (up to equivalence) maps each element of \(T\) to a diagonal matrix, and \(\dim_{\overline{\F}}\cent{V\otimes\overline{\F}}T\) is the multiplicity of the $1$-dimensional trivial representation as a constituent of \(\mathfrak{T}_T\). Following the notation established in the paragraph preceding \cite[Lemma~15.1]{Is}, let \(R\) be the full ring of algebraic integers in the complex field, and let \(U\) be the subgroup of elements of order not divisible by \(q\) in the multiplicative group of \(\C\) (clearly, \(U\subseteq R\)); we can indentify \(\overline{\F}\) with the factor ring \(R/M\), where \(M\) is a maximal ideal of \(R\) containing \(qR\), and we consider the natural homomorphism \(*:R\rightarrow\overline{\F}\). By \cite[Lemma~15.1]{Is}, the restriction \(*_U\) maps \(U\) isomorphically onto the multiplicative group of \(\overline{\F}\). Now, consider the composite map \(\mathfrak{D}\) given by \(\mathfrak{T}_T\) followed by the map which applies the inverse of \(*_{U}\) to each entry of the relevant matrix: it is easy to see that \(\mathfrak{D}\) is a complex representation of \(T\) whose character \(\delta\) is the restriction to \(T\) of the Brauer character \(\theta\) afforded by \(\mathfrak{T}\), and what we want to determine is in fact the multiplicity \([1_T,\delta]\) of the trivial character of \(T\) as a constituent of \(\delta\). 

In view of the discussion of \cite[Sections 9.2--9.4]{Bo}, it turns out that \(\theta\) has a lift \(\chi\in\irr G\) (i.e. the restriction \(\widehat{\chi}\) of \(\chi\) to the set of \(q\)-regular elements of \(G\) coincides with \(\theta\)).
Now, as we have \(\delta=\chi_T\), we can refer to the ordinary character table of \(G\) (see for example \cite[Page 58]{Bo}) and compute \([1_T,\chi_T]\). The result of this computation can be found (setting \(\ell=1\), as we are assuming irreducibility for \(V\otimes\F\)) in the second row of Table~\ref{Brauer3}, where the maximal values for \(\dim_{\F_q}\cent V x\) are displayed according to the possible dimensions of \(V\) over \(\F_q\). 
\end{proof}

We are ready to prove the first main result of this section.

\begin{theorem} \label{TipoIeIIPieni} Let \(V\) be a non-trivial irreducible module for \(G=\SL{t^a}\) over the field \(\F_q\), where \(t^a\geq 4\) and \(q\neq t\) is a prime number. For odd primes \(r\in\pi(t^a-1)\) and \(s\in\pi(t^a+1)\) (possibly \(r=q\) or \(s=q\)) let \(R\), \(S\) be respectively a Sylow \(r\)-subgroup and a Sylow \(s\)-subgroup of \(G\), and let \(T\) be a Sylow \(t\)-subgroup of \(G\). Then, considering the sets 
\[V_{I_-}=\{v\in V\mid {\textnormal{ there exists }} z\in G {\textnormal{ such that }}R ^z\trianglelefteq\cent G v\},\] \[V_{I_+}=\{v\in V\mid {\textnormal{ there exists }} z\in G {\textnormal{ such that }}S ^z\trianglelefteq\cent G v\},\] \[V_{II}=\{v\in V\mid {\textnormal{ there exists }} z \in G {\textnormal{ such that }}T^z\trianglelefteq\cent G v\},\] we have that \(V-\{0\}\) strictly contains \(V_{I_-}\cup V_{II}\), \(V_{I_+}\cup V_{II}\), and \(V_{I_-}\cup V_{I_+}\), unless one of the following holds.
\begin{enumeratei}
\item \(G\cong\SL 5\), \(s=3\), \(|V|=3^4\) and \(V-\{0\}=V_{I_+}\),
\item \(G\cong\SL {13}\), \(r=3\), \(|V|=3^6\) and \(V-\{0\}=V_{I_-}\).
\end{enumeratei}
\end{theorem}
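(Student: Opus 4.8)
The plan is to rephrase membership in the three sets as a condition on the stabilizers $\cent G v$, and then, for each of the three relevant pairs, to bound the number of vectors meeting the condition and compare it with $|V|$. Throughout set $|V|=q^d$, and let $T_-$, $T_+$ be the split and non-split tori of $G$ (of orders $t^a-1$ and $t^a+1$) containing $R$ and $S$ respectively.

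First I would record the structural reduction. Since $R$ is a \emph{full} Sylow $r$-subgroup of $G$ and is characteristic in its self-centralizing torus $T_-=\cent G R$, one has $\norm G R=\norm G{T_-}$, a dihedral group of order $2(t^a-1)$; hence for $v\neq 0$,
\[
v\in V_{I_-}\iff \exists\,z\colon\ R^z\le \cent G v\sbs \norm G{R^z}.
\]
The analogous equivalences hold for $V_{I_+}$ (with $\norm G S=\norm G{T_+}$ of order $2(t^a+1)$) and for $V_{II}$ (with $\norm G T$ the Borel subgroup of order $t^a(t^a-1)$ containing $T$). In particular the numbers of conjugates are $|G\!:\!\norm G R|=t^a(t^a+1)/2$, $|G\!:\!\norm G S|=t^a(t^a-1)/2$ and $|G\!:\!\norm G T|=t^a+1$, and each of $V_{I_-}$, $V_{I_+}$, $V_{II}$ is contained in the union of the corresponding fixed spaces $\cent V{R^z}$, $\cent V{S^z}$, $\cent V{T^z}$.

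Second, the fixed-space sizes. The very Brauer-character lift of Lemma~\ref{brauer2} computes $\dim_{\F_q}\cent V T$ (recorded in Table~\ref{Brauer3}, bounded by $2\ell$), and, applied to $R$ and $S$ in place of $T$, it computes $\dim_{\F_q}\cent V R=[1_R,\chi_R]$ and $\dim_{\F_q}\cent V S=[1_S,\chi_S]$ for the ordinary character $\chi$ lifting $V$; since the nontrivial elements of $R$ (resp. $S$) are split (resp. non-split) regular semisimple, these multiplicities are read off the five possible degrees $t^a$, $t^a\pm1$, $(t^a\pm1)/2$ of $\SL{t^a}$. Two of the three pairs can then be dispatched by a clean device. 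When $t$ is odd, $\norm G R$ and $\norm G S$ have order prime to $t$, so any nonzero vector fixed by a nontrivial element of $T$ lies in neither $V_{I_-}$ nor $V_{I_+}$; and for every $t$, $\norm G R$ and $\norm G T$ have order prime to each odd $s\in\pi(t^a+1)$, so a nonzero vector fixed by an element of order $s$ avoids $V_{I_-}\cup V_{II}$. A short character computation shows these fixed spaces are nonzero for all but finitely many small modules (the failures being the discrete-series-type modules with $a=1$, where $\cent V u=0$ for unipotent $u$, and the case $t^a+1=2^k$), so for these two unions the strict containment follows.

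The remaining union $V_{I_+}\cup V_{II}$ is the crux, and is where I expect the main obstacle to lie: the orders of $\norm G S$ and $\norm G T$ together exhaust $\pi(G)$, so no single-element argument applies and one must run the genuine count, bounding $|V_{I_+}|+|V_{II}|$ by $\tfrac{t^a(t^a-1)}{2}(|\cent V S|-1)+(t^a+1)(|\cent V T|-1)$ and comparing with $q^d-1$; using $\dim\cent V T\le 2\ell$ and $\dim\cent V S\lesssim d/|S|$ this closes for all but small $t^a$ and small $q$. The crude fixed-space bound is too weak precisely in these small cases and, critically, when $q\in\{r,s\}$ (where $R$ or $S$ is itself a $q$-group and the Brauer lift must be replaced by direct analysis, as in Lemma~\ref{SL2Nq}); there one sharpens the count via the normality condition, discarding from each $\cent V{R^z}$ the vectors fixed by a unipotent element or by an exotic subgroup of type (ii)--(iv) of Remark~\ref{Subgroups} (these lie in the proper subspaces $\cent V{\gen{R^z,u}}$), and then checks the finitely many residual groups by hand. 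The only configurations in which some pairwise union covers $V-\{0\}$ are $t^a=5$ and $t^a=13$ over $\F_3$, where by Lemma~\ref{SL2Nq} the single set $V_{I_+}$ (resp. $V_{I_-}$) already equals $V-\{0\}$ because $(G,V)$ satisfies the condition $\mathcal N_3$; these yield exactly the exceptions (a) and (b).
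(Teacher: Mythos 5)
Your overall architecture (bound each set by a union of fixed spaces of the conjugates of $R$, $S$, $T$, compute the fixed-space dimensions via Brauer-character lifts as in Lemma~\ref{brauer2}, close the count with regular-orbit results and explicit checks, and recover the exceptions (a), (b) from Lemma~\ref{SL2Nq}) is the same as the paper's. Your ``clean device'' --- exhibiting a nonzero vector fixed by an element whose order divides neither $|\norm G{R^z}|=2(t^a-1)$ nor $|\norm G{T^z}|=t^a(t^a-1)$ (resp.\ neither $|\norm G{R^z}|$ nor $|\norm G{S^z}|$), which then lies outside the relevant union --- is a genuinely different and attractive shortcut where it applies, and the paper has no analogue of it.

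The gap is that the device fails on far more than ``finitely many small modules'', and you give no argument for the failures; these are exactly the cases where the real work lies. For $V_{I_-}\cup V_{I_+}$ the device needs $t$ odd \emph{and} some nontrivial unipotent with nonzero fixed space: it therefore says nothing for $t=2$ (allowed by the hypothesis $t^a\geq 4$), nor for the infinite family $a=1$, $\dim_{\F_q}V\in\{\ell(t-1),\ell(t-1)/2\}$, where $\cent VT=0$ by Table~\ref{Brauer3}. For $V_{I_-}\cup V_{II}$ it needs an odd $s\in\pi(t^a+1)$, which does not exist whenever $t$ is a Mersenne prime and $a=1$, and even when $s$ exists one must verify $\cent Vx\neq 0$ for some $x$ of order $s$ (this can fail for certain discrete-series-type modules, e.g.\ when $t^a+1=2s$). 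In all of these residual situations one is thrown back on the genuine count, which you run only for $V_{I_+}\cup V_{II}$; the paper must run it for all three unions, and closing it requires an input you do not supply, namely the bound $\dim_{\F_q}V\geq 2\dim_{\F_q}\cent VR$. The paper gets this combinatorially from Lemma~\ref{PSL2} ($R$ normalizes exactly two Sylow $t$-subgroups, so under the covering hypothesis some conjugate $R^h$ shares no nonzero fixed vector with $R$); your substitute $\dim\cent VS\lesssim d/|S|$ via character values is unavailable precisely when $q\in\{r,s\}$ (the Sylow subgroup is then not $q$-regular, so the Brauer lift gives no information on it), and your proposed repair --- ``sharpen the count via the normality condition'' --- is not an argument. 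Finally, the residual small cases are not checkable ``by hand'': the paper needs Corollary~1.2 of \cite{L}, Theorem~2.3 of \cite{KP} for $q$ coprime to $|G|$ (a case your sketch omits), and a list of explicit GAP computations to eliminate some twenty specific triples $(q,t^a,\ell)$.
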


\begin{proof} Observe first that, if \(V-\{0\}\) is covered by just one of the sets \(V_{I_-}\), \(V_{I_+}\) or \(V_{II}\), then we get conclusions (a) or (b) by Lemma~\ref{SL2Nq}; therefore we will show that \(V-\{0\}\) cannot be covered as in the statement assuming that none of the relevant sets is empty. 

Given that, we start by ruling out the case when \(q\) is coprime to \(|G|\); in fact, in this situation, Theorem~2.3 of \cite{KP} ensures that there exists \(v\in V\) whose centralizer in \(G\) is trivial or of order \(2\), thus \(v\neq 0\) does not lie in any of the sets \(V_{I_+}\), \(V_{I_-}\) and \(V_{II}\).

We will then assume \(q\in\pi(G)-\{t\}\), and we will show first that \(V-\{0\}\) cannot be covered by \(V_{I_-}\) and \(V_{II}\). So, for a proof by contradiction, let us assume \(V-\{0\}=V_{I_-}\cup V_{II}\) (both the sets on the right-hand side being non-empty, as we said). 

If, for \(g\in G\), the element \(v\in V-\{0\}\) is centralized by both \(R\) and \(R^g\) with \(R^g\neq R\), then there exists a Sylow \(t\)-subgroup of \(G\) that is normalized by both \(R\) and \(R^g\). Since, by Lemma~\ref{PSL2}, \(R\) is contained in the normalizer of precisely two Sylow \(t\)-subgroups of \(G\), and since these normalizers contain \(t^a\) conjugates of \(R\) each, there are at most \(2(t^a-1)\) choices for \(R^g\). On the other hand, the total number of conjugates of \(R\) in \(G\) is \(\dfrac{t^{a}\cdot(t^a+1)}{2}\), so there certainly exists an element \(h\in G\) such that no element of \(V-\{0\}\) is centralized by both \(R\) and \(R^h\). As a consequence, we get \(\dim_{\F_q} V\geq 2\cdot \dim_{\F_q}\cent V R\). 

Now, the possible dimensions of \(V\) over \(\F_q\) are listed in Table~\ref{Brauer3}. Note that, \(V_{II}\) being non-empty, the dimension over \(\F_q\) of \(\cent V T\) cannot be \(0\), so the relevant dimensions in the present situation are \(\ell\cdot t^a\), \(\ell\cdot(t^a+1)\) and \(\ell\cdot\left(\dfrac{t^a+1}{2}\right)\). 

\medskip
Let us consider the case when \(\dim_{\F_q} V=\ell\cdot t^a\). Taking into account Table~\ref{Brauer3} together with the conclusions of the second-last paragraph above (and the fact that the number of Sylow \(t\)-subgroups of \(G\) is \(t^a+1\)), in order to get a contradiction it will be enough to show that the following inequality holds:\[q^{\ell\cdot t^a}-1>\dfrac{t^a\cdot(t^a+1)}{2}\cdot(q^{\ell\cdot t^{a}/2}-1)+(t^a+1)\cdot(q^{\ell}-1).\] Since \((t^a+1)\cdot(q^\ell-1)\) is clearly smaller than \(\dfrac{t^a\cdot(t^a+1)}{2}\cdot(q^{\ell\cdot t^{a}/2}-1)\), it is enough to analyze the inequality \[q^{\ell\cdot t^a}-1>t^a\cdot (t^{a}+1)\cdot (q^{\ell\cdot t^a/2}-1),\] which is in turn satisfied if \[q^{\ell\cdot t^a}>t^a\cdot (t^{a}+1)\cdot q^{\ell\cdot t^a/2}\] holds. The last inequality is obviously satisfied for every value of \(\ell\) provided it is satisfied for \(\ell=1\), and this happens whenever \(t^a\geq 17\); for smaller values of \(t^a\) we go back to the original inequality, and we see that it is not satisfied only by the following triples \((q,t^a,\ell)\): \((2,5,1)\), \((2,7,1)\), \((2,9,1)\), \((2,11,1)\), \((3,4,1)\). In any case, since the value of \(\ell\) is always \(1\), the \(\F_q[G]\)-module \(V\) is absolutely irreducible, and we are in a position to apply Corollary~1.2 of \cite{L}: there exists a regular orbit for the action of \(G\) on \(V\), a contradiction for us, \emph{except for the triple \((3,4,1)\)}. As regards \((3,4,1)\) (which is, by the way, the unique triple among those under consideration that corresponds to an existing module) a direct computation via GAP \cite{GAP} shows that \(\SL 4\) generates an orbit of size \(30\) in the action on its absoultely irreducible module of order \(3^4\). This contradiction completes the proof for the case \(\dim_{\F_q} V=\ell\cdot t^a\).

\medskip
We treat next the case \(\dim_{\F_q} V=\ell\cdot (t^a+1)\), so, we analyze the inequality
\[q^{\ell\cdot(t^a+1)}-1>\dfrac{t^a\cdot(t^a+1)}{2}\cdot(q^{\ell\cdot{\frac{t^a+1}{2}}}-1)+(t^a+1)\cdot(q^{2\ell}-1).\] As above, the second summand of the right-hand side is smaller than the first summand, so it is enough to have \[q^{\ell\cdot(t^a+1)}-1>t^a\cdot(t^a+1)\cdot(q^{\ell\cdot{\frac{t^a+1}{2}}}-1),\] which reduces to \[q^{\ell\cdot(t^a+1)}>t^a\cdot(t^a+1)\cdot q^{\ell\cdot{\frac{t^a+1}{2}}}.\] If the last inequality holds for \(\ell=1\), which happens as soon as \(t^a\geq 16\), then it clealry holds for every value of \(\ell\). On the other hand, for smaller values of \(t^a\), the original inequality is always satisfied except when the triple \((q,t^a,\ell)\) lies in \(\{(2,5,1),(2,7,1),(2,9,1),(2,11,1)\}\). Making use of \cite[Corollary~1.2]{L} as above, only the triple \((2,7,1)\) has to be checked: a computation via GAP \cite{GAP} shows that \(\SL 7\) generates an orbit of size \(21\) in the action on its absolutely irreducible module of order \(2^8\), the final contradiction for this case.


\medskip
To conclude, we consider the case \(\dim_{\F_q} V=\ell\cdot\left(\dfrac{t^a+1}{2}\right)\), only for \(t\neq 2\). Thus, we analyze the inequality
\[q^{\ell\cdot\frac{t^a+1}{2}}-1>\dfrac{t^a\cdot(t^a+1)}{2}\cdot(q^{\ell\cdot\frac{t^a+1}{4}}-1)+(t^a+1)\cdot(q^\ell-1).\] Similarly to the previous cases, we reduce to 
\[q^{\ell\cdot\frac{t^a+1}{2}}>t^a\cdot(t^a+1)\cdot q^{\ell\cdot\frac{t^a+1}{4}},\]
which is satisfied for every value of \(\ell\) as soon as it is satisfied for \(\ell=1\); this happens whenever \(t^a\geq 43\). Looking at smaller values in the original inequality for \(\ell=1\), and using Corollary~1.2 of \cite{L}, it turns out that only the triples \((3,11,1)\) and \((3,13,1)\) have to be checked. A direct computation via GAP \cite{GAP} shows that \(\SL{11}\) has two absolutely irreducible modules of order \(3^7\): in both of them, the three sets \(V_{I_-}\) (for \(r=5\)), \(V_{I_+}\) (for \(s=3\)) and \(V_{II}\) are all non-empty, so our assumptions are not satisfied. As regards the absolutely irreducible module of order \(3^7\) for  \(\SL {13}\), it has elements whose centralizer in \(\SL {13}\) is a \(2\)-group, again not our case. 

For larger values of \(\ell\) we note that, if \(\ell=2\), then the present inequality is the same as the one discussed for the case \(\dim_{\F_q} V=\ell\cdot (t^a+1)\), and the triples that have to be checked are \((2,5,2)\), \((2,7,2)\), \((2,9,2)\), \((2,11,2)\). Again by GAP \cite{GAP}, none of these triples corresponds to an existing module, the final contradiction for this case.  



\medskip
We omit the proof that \(V-\{0\}\neq V_{I_+}\cup V_{II}\), which is totally analogous to the previous one, and we focus next on the remaining claim that \(V-\{0\}\) is not covered by \(V_{I_-}\) and \(V_{I_+}\). Assuming the contrary, Table~\ref{Brauer3} yields that the dimension of \(V\) over \(\F_q\) is \(\ell\cdot(t^a-1)\) or \(\ell\cdot\left(\dfrac{t^a-1}{2}\right)\) (only for \(t\neq 2\)), and we will start by considering the former possibility. In view of the fact that both \(\dim_{\F_q}\cent V R\) and \(\dim_{\F_q}\cent V S\) are at most a half of \(\dim_{\F_q} V\), it is enough to analyze the inequality
\[q^{\ell\cdot(t^a-1)}-1>\dfrac{t^a\cdot(t^a+1)}{2}\cdot\left(q^{\ell\cdot\left(\frac{t^a-1}{2}\right)}-1\right)+\dfrac{t^a\cdot(t^a-1)}{2}\cdot\left(q^{\ell\cdot\left(\frac{t^a-1}{2}\right)}-1\right).\] As usual we reduce to \[q^{t^a-1}>t^a\cdot(t^a+1)\cdot q^{\frac{t^a-1}{2}},\] which is satisfied for \(t^a\geq 19\). For smaller values of \(t^a\), it can be checked (via the original equation for various values of \(\ell\) and taking into account Corollary~1.2 of \cite{L}) that the possible exceptions are the triples \((2,5,1)\), \((2,11,1)\), \((3,5,1)\), \((3,7,1)\), \((5,4,1)\) and \((2,5,2)\); among the corresponding modules, the first one has elements whose centralizers in \(\SL 5\) do not have a normal Sylow \(3\)-subgroup, the remaining four absolutely irreducible modules all have elements whose centralizers in the relevant group are \(2\)-groups, whereas the last one does not exists. The proof for this case is complete.

\medskip 
Finally, we focus on \(\dim_{\F_q} V=\ell\cdot\left(\dfrac{t^a-1}{2}\right)\), thus the inequality that has to be considered is \[q^{\ell\cdot\left(\frac{t^a-1}{2}\right)}-1>\dfrac{t^a\cdot(t^a+1)}{2}\cdot\left(q^{\ell\cdot\left(\frac{t^a-1}{4}\right)}-1\right)+\dfrac{t^a\cdot(t^a-1)}{2}\cdot\left(q^{\ell\cdot\left(\frac{t^a-1}{4}\right)}-1\right).\] With the usual argument, this reduces to \[q^{\frac{t^a-1}{2}}>t^{a}\cdot (t^a+1)\cdot q^{\frac{t^a-1}{4}}\] which is always satisfied for \(t^a\geq 47\). For smaller values of \(t^a\) (and for various \(\ell\)), from the original inequality we see that the triples to be checked are: \((2,t^a,1)\) for \(t^a\in\{7,9,17,23,25,31\}\), \((3,11,1)\), \((3,13,1)\), \((5,9,1)\), \((5,11,1)\), \((2,5,2)\), \((2,7,2)\), \((2,9,2)\), \((2,11,2)\), \((2,13,2)\), \((3,5,2)\), \((3,7,2)\), \((2,5,3)\), \((2,7,3)\), \((2,9,3)\) and \((2,5,4)\). None of these correspond to modules that satisfy our assumptions, the final contradiction that concludes the proof.
\end{proof} 

Next, the aforementioned result concerning actions of \(\SL{t^a}\) by automorphisms on a \(t\)-group.

\begin{theorem}
\label{TipoIeII}
Let \(T\) be a Sylow \(t\)-subgroup of \(G\cong\SL{t^a}\) (where \(t^a\geq 4\)) and, for a given odd prime divisor \(r\) of \(t^{2a}-1\), let \(R\) be a Sylow \(r\)-subgroup of \(G\). Assuming that \(V\) is a \(t\)-group such that \(G\) acts by automorphisms (not necessarily faithfully) on \(V\) and \(\cent V G=1\), consider the sets 
\[V_I=\{v\in V\mid {\textnormal{ there exists }} x\in G {\textnormal{ such that }}R^x\trianglelefteq\cent G v\},{\textnormal{ \emph{and}}}\] \[V_{II}=\{v\in V\mid {\textnormal{ there exists }} x \in G {\textnormal{ such that }}T^x\trianglelefteq\cent G v\}.\] Then the following conditions are equivalent.
\begin{enumeratei}
\item \(V_I\) and \(V_{II}\) are both non-empty and \(V-\{1\}=V_I\cup V_{II}\).
\item \(G\cong\SL {4}\), and \(V\) is an irreducible \(G\)-module of dimension \(4\) over \(\mathbb{F}_2\). More precisely, \(V\) is the restriction to $G$, embedded as \(\Omega_4^-(2)\) into ${\rm{SL}}_4(2)$, of the standard module of ${\rm{SL}}_4(2)$.
\end{enumeratei}
\end{theorem}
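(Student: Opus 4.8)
The ``if'' direction is a direct verification. When \(G\cong\SL 4\) (so \(t=2\) and \(|T|=4\)) and \(V\) is the \(4\)-dimensional \(\F_2\)-module obtained by restricting the standard \(\SL 4(2)\)-module to \(\Omega_4^-(2)\cong G\), the \(15\) nonzero vectors of \(V\) split into two \(G\)-orbits, of sizes \(5\) and \(10\). Taking \(r=3\), the stabilizer of the short orbit is isomorphic to the alternating group on \(4\) letters, which has a normal Sylow \(2\)-subgroup of order \(4\) (a full conjugate of \(T\)), so this orbit lies in \(V_{II}\); the stabilizer of the long orbit is isomorphic to the symmetric group on \(3\) letters, which has a normal Sylow \(3\)-subgroup (a full conjugate of \(R\)), so this orbit lies in \(V_I\). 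Both sets are nonempty and together they exhaust \(V-\{1\}\), which is (a). (With \(r=5\) the long orbit would meet neither set, so the equivalence is understood with \(r=3\); this choice is in fact forced in the converse, as the argument below shows.)

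For the ``only if'' direction the plan is to mirror the method of Theorem~\ref{TipoIeIIPieni}, now adapted to defining characteristic. First I would reduce to the case that \(V\) is a faithful irreducible \(\F_t[G]\)-module. The kernel of the action is a proper normal subgroup of the quasisimple group \(G\), hence central, and may be factored out; and since for any \(G\)-invariant subgroup \(W\le V\) the centralizers \(\cent G w\) (\(w\in W\)) are intrinsic, the covering hypothesis in (a) is inherited by \(W\). Choosing \(W\) minimal (an irreducible module), Lemma~\ref{SL2Nq} controls the degenerate possibilities: \(W-\{1\}=V_I\cap W\) would mean that \((G,W)\) satisfies \(\mathcal N_r\), which is impossible for a nontrivial \(\F_t\)-module with \(r\neq t\); while \(W-\{1\}=V_{II}\cap W\) forces \((G,W)\) to satisfy \(\mathcal N_t\), so that \(W\) is the natural module, a case to be excluded by a direct argument (there the \(t^a+1\) fixed lines already partition \(W-\{1\}\)). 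Hence we reduce to analysing an \emph{irreducible} module on which both \(V_I\) and \(V_{II}\) are nonempty, and it suffices to pin down that module.

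On such a module I would run the covering count. Every \(v\in V_{II}\) lies in \(\cent V{T^x}\) for some \(x\), so \(|V_{II}|\le (t^a+1)\,(|\cent V T|-1)\) since there are \(t^a+1\) Sylow \(t\)-subgroups; every \(v\in V_I\) lies in \(\cent V{R^x}\) for some \(x\), and Lemma~\ref{PSL2} (each \(R\) normalises exactly two Sylow \(t\)-subgroups) bounds how many conjugates of \(R\) can fix a common vector, yielding a bound on \(|V_I|\) in terms of \(|\cent V R|\) and the number of Sylow \(r\)-subgroups. Imposing \(|V_I|+|V_{II}|\ge |V|-1\) then gives an inequality in \(t^a\) and \(\dim_{\F_t}V\). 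The main obstacle, and the essential difference from Theorem~\ref{TipoIeIIPieni}, is that the required upper bounds on \(\dim_{\F_t}\cent V T\) and \(\dim_{\F_t}\cent V R\) must now be computed in \emph{defining} characteristic, from the Steinberg tensor-product description of the irreducible \(\F_t[\SL{t^a}]\)-modules; moreover the count is genuinely tight, since on the natural module the \(t^a+1\) fixed lines partition \(V-\{0\}\) and \(V_{II}\) already covers everything, so only a precise estimate of these fixed-point dimensions will make the inequality fail. Carrying this through forces \(t^a\) and \(\dim V\) to be small, after which the regular-orbit results of \cite{L} together with explicit computation in \cite{GAP} eliminate every surviving pair except \(G\cong\SL 4\) with its \(4\)-dimensional module, which one then identifies as the \(\Omega_4^-(2)\)-module of (b).
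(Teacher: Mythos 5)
Your verification of the ``if'' direction is correct and matches the paper's (the stabilizers are \(A_4\) and \(S_3\), and your remark that \(r\) must be \(3\) reflects how the statement is meant to be read). The ``only if'' direction, however, has two genuine gaps. First, the reduction to a faithful irreducible \(\F_t[G]\)-module does not go through as you set it up: a minimal \(G\)-invariant subgroup \(W\) of \(V\) inherits the covering \(W-\{1\}=(W\cap V_I)\cup(W\cap V_{II})\), but not the hypothesis that both pieces are non-empty. The problematic degeneration is \(W\cap V_I=\emptyset\), which by Lemma~\ref{SL2Nq} makes \(W\) the natural module; this is not ``excluded by a direct argument'', because it is perfectly compatible with condition (a) holding for the ambient \(V\supsetneq W\) (all of \(V_I\) may live outside \(W\)), and at that point you have learned nothing that pins down \(V\). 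Remember also that \(V\) is only assumed to be a \(t\)-group, not elementary abelian, so there is genuinely more of \(V\) to control than a single irreducible constituent.

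Second, your counting step is an inequality \(|V|-1\le|V_I|+|V_{II}|\) with upper bounds on fixed-point sizes, and, as you yourself observe, such a count is tight (it is an equality on the natural module), so it cannot close the argument. The paper instead runs the count on the whole \(t\)-group \(V\) and makes it \emph{exact}: since no proper subgroup of \(G\) has both a normal Sylow \(t\)- and a normal Sylow \(r\)-subgroup, \(V_I\) and \(V_{II}\) are disjoint, \(V_{II}\) is partitioned by the sets \(\cent{V^{\sharp}}{T^x}\), and (via Lemma~\ref{PSL2}) the overlap \(\cent{V^{\sharp}}{R}\cap V_{II}\) is determined precisely as \(\cent{V^{\sharp}}{T_1R}\cup\cent{V^{\sharp}}{T_2R}\) for the two Sylow \(t\)-subgroups \(T_1,T_2\) normalized by \(R\). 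Writing \(|V|=t^d\), \(|\cent VR|=t^e\), \(|\cent VT|=t^f\), \(|\cent V{TR}|=t^g\), this gives an exact equation which, by uniqueness of the \(t\)-adic expansion, has no solution for odd \(t\) and for \(t=2\) forces \(d=2a\), \(e=g+1\) and \(a=f+1\). Only then does the paper pass to an irreducible submodule \(W\) of \(\Omega(\zent V)\), using \cite[Lemma~3.3]{ACDPS} and \cite[Lemma~5.2]{LWc} for the degenerate intersections and the equality \(|W|=2^{2a}=|V|\) to conclude \(V=W\); it then identifies \(V\) as the \(\Omega_4^-(2^b)\)-module via \cite[Lemma~3.12]{PR} and combines \(f=a/2\) with \(a=f+1\) to get \(a=2\). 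Your plan produces neither the conclusion \(V=W\) nor the relation \(a=f+1\), so the proposed endgame (regular orbits from \cite{L} plus \cite{GAP}) has no finite list of cases to work from.
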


\begin{proof}
Set \(V^{\sharp}=V-\{1\}\) and, as in our hypothesis, assume \(V^{\sharp}=V_I\cup V_{II}\) where \(V_I\) and \(V_{II}\) are both non-empty; we start by observing that \(V_I\) and \(V_{II}\) are disjoint, because our assumptions ensure that no non-trivial element of \(V\) is centralized by the whole \(G\) and, on the other hand, no proper subgroup of \(G\) contains both a conjugate of \(R\) and a conjugate of \(T\) as a normal subgroup. By similar reasons, it is also clear that \(V_{II}\) is partitioned by  the subsets \(\cent{V^{\sharp}}{T^x}\) as \(x\) runs in \(G\).

As regards the set \(V_{I}\), we see that it is certainly covered by the union of the subsets \(\cent {V^{\sharp}}{R^x}\) as \(x\) runs in \(G\) (in fact, every element of \(V_{I}\) lies in precisely one of those subsets); however, only in the case when \(r\) is a divisor of \(t^a-1\), the subsets \(\cent {V^{\sharp}}{R^x}\) can have a non-empty intersection also with \(V_{II}\), and for our purposes it will be important to determine these intersections. 

To this end, assuming \(r\in\pi(t^a-1)\), we will show next that \(\cent {V^{\sharp}}{R}\cap V_{II}\) is the (disjoint) union of \(\cent {V^{\sharp}}{T_1R}\) and \(\cent {V^{\sharp}}{T_2R}\) where \(T_1\), \(T_2\) are suitable conjugates of \(T\). In fact, these \(T_1\) and \(T_2\) turn out to be the two Sylow \(t\)-subgroups of \(G\) that are normalized by \(R\) (see Lemma~\ref{PSL2}).

So, let \(T_0\in\syl t S\) be such that \(R\) lies in \(\norm G {T_0}\), and let \(v\) be an element of \(\cent {V^{\sharp}}{T_0R}\). Then \(v\) is in \(V_{II}\), as otherwise it would lie in \(V_I\) and \(\cent G v\) would contain a unique Sylow \(r\)-subgroup together with a Sylow \(t\)-subgroup of \(G\), which does not happen for any proper subgroup of \(G\). Also, it is clear that \(v\) is centralized by \(R\). 
As a consequence, if \(T_1\) and \(T_2\) are the two Sylow \(t\)-subgroups of \(G\) that are normalized by \(R\), then we get \(\cent{V^{\sharp}}{T_1R}\cup\cent{V^{\sharp}}{T_2R}\subseteq\cent{V^{\sharp}}{R}\cap V_{II}\). 
On the other hand, let \(v\) be an element of \(\cent {V^{\sharp}} R\cap V_{II}\), and let \(T_0\) be the (unique) Sylow \(t\)-subgroup of \(G\) that centralizes \(v\). As \(T_0\) is a normal subgroup of \(\cent G v\) and \(R\subseteq\cent G v\), we clearly get \(R\subseteq\norm G{T_0}\). This yields \(T_0\in\{T_1,T_2\}\) and,  together with the discussion in the previous paragraph, \(\cent{V^{\sharp}}{T_1R}\cup\cent{V^{\sharp}}{T_2R}=\cent{V^{\sharp}}{R}\cap V_{II}\), as wanted.

\smallskip
Still in the case \(r\in\pi(t^a-1)\), set \(|V|=t^d\), \(|\cent V R|=t^e\), \(|\cent V T|=t^f\) and, assuming \(R\subseteq\norm G T\) (as we may), \(|\cent V {TR}|=t^g\). Note that \(e\) and \(f\) are positive integers because \(V_I\) and \(V_{II}\) are non-empty; moreover, \(f\geq g\), and \(t^e>2t^g-1\). In view of the above discussion, we get the following equality. \[t^d-1=|V^{\sharp}|=\left(\sum_{T\in\syl t G}|\cent{V^{\sharp}} T|\right)+\left(\sum_{R\in\syl q G}|\cent {V^{\sharp}} R-V_{II}|\right)=\]
\[=(t^a+1)(t^f-1)+\dfrac{t^a(t^a+1)}{2}(t^e-2t^g+1),\] therefore,
\[2t^d+t^a+2t^{2a+g}+2t^{a+g}=2t^{a+f}+2t^f+t^{2a+e}+t^{2a}+t^{a+e}.\] It is not difficult to check that, by the uniqueness of the \(t\)-adic expansion, the above equality is never satisfied if \(t\) is odd. As regards the case \(t=2\), the equality is indeed satisfied if and only if \(d=2a\), \(g+1=e\) and \(a=f+1\), so, in particular, we get \(|V|=2^{2a}\). 

We focus next on the subgroup \(\Omega=\Omega(\zent V)\) of \(V\) generated by all the central elements of \(V\) having order \(2\). Since \(\Omega\) is a characteristic subgroup of \(V\) and it is an elementary abelian \(2\)-group, we can view it as an \(G\)-module over \(\mathbb{F}_2\) and we can consider an irreducible submodule \(W\) of it. Of course no non-trivial element of \(W\) is centralized by the whole \(G\), and we have \(W=(W\cap V_I)\cup(W\cap V_{II})\). Moreover, if \(W\cap V_{II}\) is empty, then \cite[Lemma~3.3]{ACDPS} yields a contradiction; on the other hand, if \(W\cap V_I\) is empty or if both \(W\cap V_I\) and \(W\cap V_{II}\) are non-empty, then we get \(|W|=2^{2a}(=|V|)\) via \cite[Lemma~5.2]{LWc} or via the discussion in this proof, respectively. In any case we conclude that \(V=W\) is an irreducible \(G\)-module of dimension \(2a\) over \(\mathbb{F}_2\). Now, by Lemma~3.12 in \cite{PR}, \(a\) is even (\(a=2b\), say) and \(V\) is necessarily the restriction to \(\mathbb{F}_2\) of the ``natural" (\(4\)-dimensional) \(\mathbb{F}_{2^b}\)-module for the group \(\Omega_4^-(2^b)\) or one of its Galois twists; furthermore, in this situation we get \(|\cent V T|=2^b\), i.e. \(f=a/2\). But we observed that \(a=f+1\), whence \(a=2\) and the proof that (a) implies (b) is complete in this case. The converse statement is also true, because the centralizers of the non-trival elements in the module described in (b) are isomorphic either to \(S_3\) or to \(A_4\).

\smallskip
It remains to show that condition (a) cannot hold if, under our hypotheses, \(r\) lies in \(\pi(t^a+1)\). In fact, using the notation introduced for the case \(r\in\pi(t^a-1)\), we get \[t^d-1=(t^a+1)(t^f-1)+\dfrac{t^a(t^a-1)}{2}(t^e-1),\] therefore
\[2t^d+t^a+t^{2a}+t^{a+e}=2t^{a+f}+2t^f+t^{2a+e}.\]  
Again by the uniqueness of the \(t\)-adic expansion, it is not difficult to see that the above equality is never satisfied.
\end{proof}

We conclude the section with Lemma~\ref{brauer}, that deals with one specific orbit condition in linear actions of \(\SL{t^a}\), for \(t\neq 2\), on modules over \(\mathbb{F}_2\).

\begin{lemma}\label{brauer}
Let \(V\) be a non-trivial irreducible module for \(G=\SL{t^a}\) over the field \(\F_2\), where \(t\) is an odd prime with \(t^a>3\). Let \(r\neq t\) be a prime in \(\{3,5\}\). Then there exists \(v\in V\) such that \(\cent G v\) does not contain any element of order \(r\), except in the following cases: \(t^a=5\) and \(\dim_{\F_2} V=4\), or \(t^a=7\) and \(\dim_{\F_2} V=3\) (in both cases \(r=3\)).
\end{lemma}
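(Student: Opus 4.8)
The plan is to show, by a counting argument, that the fixed spaces of the elements of order $r$ cannot cover $V$. We may assume $r\mid t^{2a}-1$ (equivalently $r\mid|G|$, since $r\neq t$); otherwise $G$ has no element of order $r$ and any non-zero vector works. Writing $\mathcal{P}$ for the set of subgroups of $G$ of order $r$, a vector $v$ as required exists if and only if $V-\{0\}\neq\bigcup_{P\in\mathcal{P}}(\cent V P-\{0\})$, where $\cent V P=\cent V x$ for any generator $x$ of $P$. Since $r$ and $t$ are odd, $r$ divides exactly one of $t^a-1$ and $t^a+1$; hence every element of order $r$ is regular semisimple, lies in a unique maximal torus (split if $r\mid t^a-1$, non-split if $r\mid t^a+1$), and all members of $\mathcal{P}$ are $G$-conjugate. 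In particular the integer $c:=\dim_{\F_2}\cent V x$ does not depend on the chosen $x$ of order $r$, and a crude union bound reduces the statement to the single inequality
\[2^{\dim_{\F_2}V}-1>|\mathcal{P}|\,(2^{c}-1).\]

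First I would compute $|\mathcal{P}|$. Arguing exactly as in Lemma~\ref{PSL2}, the order-$r$ subgroups are precisely the unique subgroups of order $r$ of the cyclic maximal tori, and $G$ permutes these tori transitively; this gives $|\mathcal{P}|=t^a(t^a+1)/2$ when $r\mid t^a-1$ and $|\mathcal{P}|=t^a(t^a-1)/2$ when $r\mid t^a+1$, so in any case $|\mathcal{P}|<t^{2a}$.

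Next I would pin down $c$ by the same Brauer-lift technique used in the proof of Lemma~\ref{brauer2}. Choosing a splitting field $\F\supseteq\F_2$ for $G$ and its subgroups, and letting $\ell$ be the composition length of $V\otimes\F$, the $2$-Brauer character of an absolutely irreducible constituent of $V\otimes\F$ has an ordinary lift $\chi\in\irr G$, whence
\[c=\ell\cdot[1_{\gen x},\chi_{\gen x}]=\frac{\ell}{r}\sum_{j=0}^{r-1}\chi(x^j).\]
Reading the values of $\chi$ at the order-$r$ element $x$ from the character table of $\SL{t^a}$ (see \cite{Bo}), and recalling that $\dim_{\F_2}V$ is one of the entries of Table~\ref{Brauer3}, in each family $c$ turns out to differ from $(\dim_{\F_2}V)/r$ only by a bounded amount; in particular $c\leq(\dim_{\F_2}V)/2$ once $t^a$ exceeds a small bound. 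Here the Steinberg, the degree-$(t^a\pm 1)$ and the degree-$(t^a\pm 1)/2$ characters must be handled separately, as they behave differently at split and non-split semisimple elements.

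Finally, since every non-trivial irreducible $\F_2$-module has $\dim_{\F_2}V\geq(t^a-1)/2$ and $|\mathcal{P}|<t^{2a}$, the displayed inequality follows from $c\leq(\dim_{\F_2}V)/2$ as soon as $2^{(\dim_{\F_2}V)/2}>t^{2a}$, which holds for all but finitely many small values of $t^a$. For the remaining small cases I would inspect the finitely many existing irreducible $\F_2$-modules directly with GAP \cite{GAP}; this leaves exactly the two asserted exceptions, in which $G/\zent G\cong\SL 4$ acts as $\Omega_4^-(2)$ on $\F_2^4$ (point stabilizers $A_4$ or $S_3$) and $G/\zent G\cong\GL 3 2$ acts naturally on $\F_2^3$ (point stabilizer of order $24$), so that there every non-zero vector is centralized by an element of order $3$, while no exception arises for $r=5$. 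I expect the main obstacle to be the computation of $c=\dim_{\F_2}\cent V x$: unlike the Sylow-$t$ computation of Lemma~\ref{brauer2}, one must now control the fixed space for $x$ in both split and non-split tori and for every composition length $\ell$, and then sharply isolate the genuine small exceptions from the generic behaviour, rather than merely bounding orbit sizes.
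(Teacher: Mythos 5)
Your proposal is correct and follows essentially the same route as the paper: a union bound over the conjugates of the order-\(r\) subgroup, an upper bound for \(\dim_{\F_2}\cent V x\) obtained by lifting the \(2\)-Brauer character of an absolutely irreducible constituent of \(V\otimes\F\) and computing \([1_{\gen x},\chi_{\gen x}]\) from the ordinary character table, and a finite check (via Lee's regular orbit theorem and GAP) of the small cases, isolating the two exceptions \((5,4)\) and \((7,3)\). The only detail you gloss over is that in characteristic \(2\) not every non-principal irreducible \(2\)-Brauer character of \(\PSL{t^a}\) has an ordinary lift --- when \(4\mid t^a-1\) and the character lies in the principal block one must use a virtual lift \(\gamma-1_S\) --- but this only shifts the fixed-space bound by \(\ell\) and does not affect the argument.
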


\begin{proof}
We will prove the statement as follows. Given an element \(x\in G\) of order \(r\), we will establish an upper bound for the dimension over \(\F_2\) of the subspace \(\cent V x\); we will then see that, with the possible exceptions mentioned in the statement, this dimension is too small to get a covering of \(V\) with subspaces of the form \(\cent V x\) where \(x\) ranges over the elements of order \(r\) in \(G\). 

The bound for \(\dim_{\F_2}\cent V x\) can be obtained by the same argument used in Lemma~\ref{brauer2}. Observing that \(V\) is in fact a module for \(S=\PSL{t^a}\) (because the element of order \(2\) of \(G\) acts trivially on \(V\)), we consider the irreducible \(2\)-Brauer characters of \(S\).  
In view of the discussion of \cite[Section VIII]{B}, it turns out that every non-principal irreducible \(2\)-Brauer character \(\theta\) of \(S\) is unique in its \(2\)-block and it has a lift \(\chi\in\irr S \), except when \(t^a-1\) is divisible by \(2^2\) and \(\theta\) lies in the principal \(2\)-block of \(S\); in the latter case, however, there exists \(\gamma\in\irr S\) such that \(\gamma-1_S\) is a lift for \(\theta\). In particular, the degrees of the irreducible \(2\)-Brauer characters of \(S\) are \(t^a-1\), \((t^a-1)/2\) and \(t^a+1\).
Now we can refer to the ordinary character table of \(S\) (see for example~\cite[Pages~80, 82]{B}) and compute \([1_{\langle x\rangle},\chi_{\langle x\rangle}]\), where \(\chi\in\irr S\) ranges over a set of lifts for the non-principal irreducible \(2\)-Brauer characters of \(S\). In Table~\ref{lifts3} and Table~\ref{lifts5} the parameter \(\ell\) denotes (as in Lemma~\ref{brauer2}) the composition length of the \(\F[G]\)-module \(V\otimes \F\), where \(\F\) is a splitting field for \(G\) and all its subgroups; the maximal value of \([1_{\langle x\rangle},\chi_{\langle x\rangle}]\) is shown in the second and third row of the tables (dealing with the cases \(r\mid t^a-1\) and \(r\mid t^a+1\), respectively). 

\begin{table}[htbp]
\caption{\label{lifts3} Maximal dimension of the centralizer of an element of order \(3\).}
\renewcommand{\arraystretch}{.95}
$$
\begin{array}{llll} \hline\hline\\ 
\dim_{\F_2} V\qquad\qquad & {\ell\cdot (t^a-1)}\qquad\quad&\ell\cdot\left(\dfrac{t^a-1}{2}\right)\qquad\quad& \ell\cdot(t^a+1)\\ \\   
\hline
\\ 
\dim_{\F_2}\cent V x&\ell\cdot\left( \dfrac{t^a-1}{3} \right)&\ell\cdot\left( \dfrac{t^a-1}{6} \right)&\ell\cdot\left( \dfrac{t^a+5}{3} \right)\\
\text{\rm{for }}t^a\equiv 1 {\textnormal{ (mod } 3)}&&&\\ \\
\hline
\\
\dim_{\F_2}\cent V x&\ell\cdot\left( \dfrac{t^a+1}{3} \right)&\ell\cdot\left( \dfrac{t^a-5}{6} \right)&\ell\cdot\left( \dfrac{t^a+1}{3} \right)\\
\text{\rm{for }}t^a\equiv -1 {\textnormal{ (mod } 3)}&&&\\ \\
\hline\hline
\\ \\
\end{array}
$$
\end{table}

\begin{table}[htbp]
\caption{\label{lifts5} Maximal dimension of the centralizer of an element of order \(5\).}
\renewcommand{\arraystretch}{.95}
$$
\begin{array}{llll} \hline\hline\\ 
\dim_{\F_2} V\qquad\qquad& {\ell\cdot (t^a-1)}\qquad\quad&\ell\cdot\left(\dfrac{t^a-1}{2}\right)\qquad\quad& \ell\cdot(t^a+1)\\ \\   
\hline
\\ 
\dim_{\F_2}\cent V x&\ell\cdot\left( \dfrac{t^a-1}{5} \right)&\ell\cdot\left( \dfrac{t^a-1}{10} \right)&\ell\cdot\left( \dfrac{t^a+9}{5} \right)\\
\text{\rm{for }}t^a\equiv 1 {\textnormal{ (mod } 5)}&&&\\ \\
\hline
\\
\dim_{\F_2}\cent V x&\ell\cdot\left( \dfrac{t^a+1}{5} \right)&\ell\cdot\left( \dfrac{t^a-9}{10} \right)&\ell\cdot\left( \dfrac{t^a+1}{5} \right)\\
\text{\rm{for }}t^a\equiv -1 {\textnormal{ (mod } 5)}&&&\\ \\
\hline\hline
\end{array}
$$
\end{table}

Next, assume that every element of \(V\) is centralized by an element of order \(r\) of \(G\); then, denoting by \(k\) the number of Sylow \(r\)-subgroups of \(G\) and choosing a non-trivial element \(x_i\) (\(i\in\{1,\ldots, k\}\)) from each of those subgroups, we have  \[V-\{0\}=\bigcup_{i=1}^k(\cent V {x_i}-\{0\}).\] Observe that \(k=t^a\cdot(t^a\pm 1)\), where the plus or minus sign occurs according to whether \(t^a\equiv 1{\textnormal{ (mod }} r)\) or \(t^a\equiv -1{\textnormal{ (mod }} r)\) respectively. So, setting \(m\) to be the maximal dimension of \(\cent V x\) corresponding to each \(d=\dim_{\F_2}V\) as shown in Table~\ref{lifts3} and Table~\ref{lifts5}, we consider the inequality 
\begin{equation}\label{ineq}
2^d-1> t^a\cdot(t^a+ 1)\cdot(2^m-1),
\end{equation}
and we discard all the modules whose corresponding pair \((t^a,d)\) satisfies  Inequality~(1) (the appropriate value of \(m\) is deduced by the tables from the values of \(t^a\) and \(d\)).  

Note that (1) is true whenever we have 
\begin{equation} \label{ineq2}
2^d> t^a\cdot(t^a+ 1)\cdot 2^m;
\end{equation} 
furthermore, writing \(d=\ell d_0\), \(m=\ell m_0\), and applying the function \(\log_2\) to both sides, it is also easy to see that Inequality~(2) is in turn satisfied if \(2^{d_0}> t^a\cdot(t^a+ 1)\cdot 2^{m_0}\) is. In other words, if an absolutely irreducible module (i.e. an irreducible module for which the parameter \(\ell\) is \(1\)) has a corresponding pair \((t^a,d_0)\) satisfying~(2), then we can discard every irreducible module corresponding to a pair of the kind \((t^a,\ell d_0)\).

Now, the list of pairs \((t^a,d)\) that do not satisfy (2) and that correspond to absolutely irreducible modules is the following.
\begin{enumeratei}
\item For \(r=3\): \((5,2)\), \((5,4)\), \((5,6)\), \((7, 3)\), \((7,6)\), \((7, 8)\), \((11, 5)\), \((11, 10)\), \((13, 6)\), \((17, 8)\), \((19, 9)\), \((23, 11)\), \((25, 12)\).
\item For \(r=5\): \((9,4)\), \((9,8)\), \((11,5)\), \((19,9)\).
\end{enumeratei}
This can be refined further using Corollary~1.2 of \cite{L} (so, discarding the modules on which the action of \(G\) generates regular orbits); as a result, the pairs that still need to be analyzed for the case \(\ell=1\) are the following.
\begin{enumeratei}
\item For \(r=3\): \((5,4)\), \((7,3)\), \((7, 8)\), \((11, 10)\), \((17, 8)\), \((23, 11)\), \((25, 12)\).
\item For \(r=5\): \((9,4)\).
\end{enumeratei}
A direct computation with GAP \cite{GAP} shows that, in all of the above cases, there are elements of the relevant module whose centralizer in \(G\) does not have an order divisible by \(r\), except for the pairs \((5,4)\) and \((7,3)\) as claimed in the statement.

As regards the cases when \(\ell>1\), only three pairs do not satisfy inequality (2), the value of \(\ell\) being \(2\) for all of them: these are \((5,4)\), \((5,8)\) and \((7,6)\). The non-absolutely irreducible pair \((5,4)\) is associated to the natural module of \(\SL 4\cong \PSL 5\) and obviously does not satisfy the condition about the centralizers, whereas the pairs \((5,8)\) and \((7,6)\) are not associated to any existing module. 

On the other hand, if \(V\) is an absolutely irreducible module corresponding to the pairs \((5,4)\) or \((7,3)\), then every element of \(V\) is actually centralized by an element of order \(3\) (the sets of orbit sizes are \(\{5,10\}\) and \(\{7\}\) respectively). The proof is complete.
\end{proof}

\section{The structure of the solvable residual}

Let \(G\) be a non-solvable group having a normal section isomorphic to \(\PSL{t^a}\), where \(t\) is an odd prime with \(t^a> 5\), and assume that \(\Delta(G)\) is connected with a cut-vertex. Our aim in this section is to describe the structure of the solvable residual \(K\) of \(G\). In particular we will see that, except for one sporadic case, either we have \(K\cong\PSL{t^a}\), or  \(K\cong\SL{t^a}\), or \(K\) contains an abelian minimal normal subgroup \(L\) of \(G\) such that \(K/L\cong\SL{t^a}\) and \(L\) is the natural module for \(K/L\). We will actually prove that the dual group of \(L\) is the natural module for \(K/L\), but the desired conclusion follows taking into account that this module is self dual. 
  
\begin{theorem}\label{MainAboutK}
Assume that the group \(G\) has a composition factor isomorphic to $\PSL{t^a}$, for an odd prime \(t\) with \(t^a> 5\), and let \(p\) be a prime number. Assume also that \(\Delta(G)\) is connected with cut-vertex \(p\). Then, denoting by \(K\) the solvable residual of \(G\), one of the following conclusions holds.
\begin{enumeratei} 
\item \(K\) is isomorphic to \(\PSL{t^a}\) or to \(\SL{t^a}\); 
\item \(K\) contains a minimal normal subgroup \(L\) of \(G\) such that \(K/L\) is isomorphic to \(\SL{t^a}\) and \(L\) is the natural module for \(K/L\).
\item \(t^a=13\) and \(p=2\). \(K\) contains a minimal normal subgroup \(L\) of \(G\) such that \(K/L\) is isomorphic to \(\SL{13}\), and \(L\) is isomorphic to one of the two \(6\)-dimensional irreducible modules for \(\SL{13}\) over \(\F_3\). 
\end{enumeratei}
\end{theorem}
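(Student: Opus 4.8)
The plan is to combine the structural reduction of Lemma~\ref{InfiniteCommutator} with the orbit theorems of Section~3, using the cut-vertex hypothesis to pin down the module. By Theorem~\ref{0.2}, the assumption that $\Delta(G)$ is connected with a cut-vertex forces $G/R$ to be almost-simple, and since the unique non-abelian composition factor of $G$ is $\PSL{t^a}$, the socle of $G/R$ is $\PSL{t^a}$. As $t^a>5$, I would apply Lemma~\ref{InfiniteCommutator} to $K$: if its conclusion~(a) holds we are in conclusion~(a) of the theorem and are done. The value $t^a=9$, excluded from Lemma~\ref{InfiniteCommutator} because of the exceptional Schur multiplier of $A_6\cong\PSL 9$, I would treat separately by inspecting the covers $3\cdot A_6$ and $6\cdot A_6$ and checking directly that they do not produce a compatible graph. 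Thus I may assume we are in conclusion~(b) of Lemma~\ref{InfiniteCommutator}: $K$ has a non-trivial normal subgroup $N$ with $K/N\cong\PSL{t^a}$ or $\SL{t^a}$ and with no non-principal $K$-invariant character in $\irr{N/N'}$.

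The heart of the argument is to replace $N$ by a minimal normal subgroup $L$ of $G$ contained in $K$ and to show $K/L\cong\SL{t^a}$ with $L$ a very restricted module. Passing to an abelian chief factor inside $N$, the non-invariance condition lets me take $L$ to be an elementary abelian $q$-group on which $K/L$ acts, irreducibly since $L$ is minimal normal and abelian. The action must be non-trivial (a trivial action would make every character of $L$ be $K$-invariant, against the reduction), so $K/\cent K L$ is a non-trivial quotient of $K/L$; the modules surviving the analysis below are faithful $\SL{t^a}$-modules (the central involution acts as $-1$), which excludes the $\PSL{t^a}$ possibility and yields $K/L\cong\SL{t^a}$. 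Here I would also invoke Lemma~\ref{singer} to control the action of the solvable radical $R$ on $L$, forcing suitable $u$-subgroups of $R$ into $\cent G L$; this is what ultimately guarantees that $L$ is minimal normal in $G$ and that the $K$-module and $G$-module structures agree.

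The final and most delicate step converts the cut-vertex hypothesis into an orbit-covering condition on the dual module $\widehat L=\irr L$ (self-dual in the natural-module case) and then invokes the orbit theorems. By Lemma~\ref{PSL2bis}, in $\Delta(\PSL{t^a})$ the prime $t$ forms its own connected component; via Clifford theory, a non-principal $\lambda\in\widehat L$ whose $K$-orbit has size divisible both by $t$ and by a torus prime $r\mid t^a\pm1$ produces a character degree divisible by $tr$, hence an edge incompatible with the cut-vertex structure of $\Delta(G)$ (with the single exception recorded in case~(c)). Forbidding such edges means $\widehat L-\{1\}$ is covered by two of the sets $V_{I_-},V_{I_+},V_{II}$. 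When $q\neq t$, Theorem~\ref{TipoIeIIPieni} shows this covering is impossible unless we are in its exceptional case $\SL{13}$ on $3^6$ (the case $\SL 5$ on $3^4$ being excluded by $t^a>5$), which is exactly conclusion~(c); when $q=t$, keeping $t$ from acquiring forbidden neighbours forces $\widehat L-\{1\}=V_{II}$ (Theorem~\ref{TipoIeII} excludes, for odd $t$, a covering in which $V_I$ is also non-empty), i.e. the pair $(K/L,L)$ satisfies $\mathcal{N}_t$, whence Lemma~\ref{SL2Nq} identifies $L$ with the natural module and gives conclusion~(b); Lemma~\ref{brauer} would clean up the low-dimensional $\F_2$-modules that the coarse orbit bounds leave open. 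The main obstacle, as flagged in the introduction, is precisely the middle step: proving that $L$ can be taken minimal normal in $G$ and rigorously converting the graph-theoretic cut-vertex condition into the exact covering hypothesis demanded by the orbit theorems.
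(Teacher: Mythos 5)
Your outline follows the same overall route as the paper (Theorem~\ref{0.2}, then Lemma~\ref{InfiniteCommutator}, then converting the cut-vertex condition into an orbit-covering statement and invoking Theorems~\ref{TipoIeIIPieni} and \ref{TipoIeII} together with Lemma~\ref{SL2Nq}), but there is a genuine gap at the step you yourself flag as ``the middle step,'' and it is not a formality. Lemma~\ref{InfiniteCommutator}(b) hands you a normal subgroup $L$ of $K$ with $K/L\cong\PSL{t^a}$ or $\SL{t^a}$ and with no $K$-invariant non-principal character of $L/L'$; this $L$ need not be abelian, and you cannot simply ``pass to an abelian chief factor inside $N$'' and relabel it $L$: a chief factor sitting strictly below $L$ does not have $\SL{t^a}$ acting on it as the full quotient of $K$, so the orbit theorems only ever control the top layer $L/Y$ for a chief factor $L/Y$ of $K$. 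The real content is proving $L'=1$. In the paper this consumes the second half of both Theorem~\ref{pallasgonfia} and Theorem~\ref{pallepiene}: one takes $U=L'$, a chief factor $U/Z$, and splits into the cases $U/Z\not\leq\zent{L/Z}$, $U/Z\leq\zent{L/Z}$ but not central in $K/Z$, and $U/Z\leq\zent{K/Z}$, using Curran's cohomology vanishing to force splitting, fully ramified characters and Wolf's lemma on character triples, the Navarro--Tiep theorem on abelian Sylow $2$-subgroups when $p\neq 2$, and ad hoc computations for $t^a=9$. None of this is replaced by anything in your sketch. Moreover your appeal to Lemma~\ref{singer} to guarantee minimal normality in $G$ is misplaced: in the paper that lemma is only used later, in the proof of the Main Theorem; normality of $L$ in $G$ comes for free once $L'=1$, because then $L=\oh t K$ (or $\oh 3 K$ in case (c)) is characteristic in $K$, and minimality follows from irreducibility as a $K/L$-module.

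A second, smaller gap is in the conversion of the graph condition to the covering hypothesis. The easy direction (an orbit of size divisible by $tr$ yields an edge $t$--$r$) is as you say, but the orbit theorems require more than ``the orbit size avoids $tr$'': they need a conjugate of the full Sylow (or Hall) subgroup to be \emph{normal} in each inertia subgroup. Extracting this from the non-adjacency requires running through Dickson's list of subgroups of $\PSL{t^a}$ for $I_K(\lambda)N/N$ and, in each type, deciding whether $\lambda$ extends to its inertia subgroup (Gallagher) or not (character triples, Schur multipliers of the candidate inertia quotients); the exceptional types (iii) and (iv) are exactly where case (c) and the $\F_2$-module obstructions handled by Lemma~\ref{brauer} arise. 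Your sketch names the right tools but does not carry out this analysis, and without it the hypothesis of Theorem~\ref{TipoIeIIPieni} is not verified.
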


Our analysis concerning Theorem~\ref{MainAboutK} splits in two parts, depending on whether one of the sets of primes \(\pi(t^a-1)\), \(\pi(t^a+1)\) reduces to the prime \(2\) or not. Theorem~\ref{pallasgonfia}, which is introduced by Proposition~\ref{ThreeVertices}, deals with the former situation.

\begin{proposition} \label{ThreeVertices} Set \(G\simeq\PSL{t^a}\), where \(t\) is an odd prime, and assume that one among the sets of primes \(\pi_{-}=\pi(t^a-1)-\{2\}\) and \(\pi_{+}=\pi(t^a+1)-\{2\}\) is empty. Then the following conclusions hold.
\begin{enumeratei}
\item \(a=1\), unless \(t^a=9\).
\item If \(|\pi(G)|=3\), then \(t^a\in\{5,7,9,17\}\).
\end{enumeratei}
\end{proposition}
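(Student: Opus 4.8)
The plan rests on the order formula \(|\PSL{t^a}|=t^a(t^a-1)(t^a+1)/2\). Since \(t\) divides neither \(t^a-1\) nor \(t^a+1\), and no odd prime divides both \(t^a-1\) and \(t^a+1\) (it would divide their difference \(2\)), the prime set splits as a disjoint union \(\pi(G)=\{t\}\cup\{2\}\cup\pi_-\cup\pi_+\); in particular \(|\pi(G)|=2+|\pi_-|+|\pi_+|\). Both parts thus reduce to controlling the odd prime factors of \(t^a-1\) and \(t^a+1\).

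For part~(a) I would use the primitive prime divisor result recalled in Section~2 (\cite{MW}). If \(\pi_+=\emptyset\), then \(t^a+1\) is a power of \(2\); any primitive prime divisor \(q\) of \(t^{2a}-1\) has order \(2a\) modulo \(q\), hence \(q\equiv1\pmod{2a}\) is odd and divides \(t^a+1\) but not \(t^a-1\), contradicting \(\pi_+=\emptyset\). As \(t\) is odd, such a \(q\) exists for every \(a\ge2\), so \(a=1\). If instead \(\pi_-=\emptyset\), then \(t^a-1\) is a power of \(2\), and a primitive prime divisor of \(t^a-1\) is again odd and forbidden; for \(t\) odd this leaves only \(a=1\), or \(a=2\) with \(t=2^c-1\). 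In the latter case \(t^2-1=2^{c+1}(2^{c-1}-1)\) is a \(2\)-power only for \(c=2\), i.e.\ \(t^a=9\). Hence \(a=1\) unless \(t^a=9\).

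For part~(b), \(|\pi(G)|=3\) is equivalent to \(|\pi_-|+|\pi_+|=1\), so exactly one of the two sets is empty (in agreement with the hypothesis) and the other is a single odd prime \(r\). By part~(a) either \(t^a=9\), which directly satisfies \(|\pi(\PSL9)|=|\pi(360)|=3\), or \(a=1\). In the latter case I split into two symmetric sub-cases. If \(\pi_+=\emptyset\), then \(t=2^n-1\) is a Mersenne prime and \(t-1=2(2^{n-1}-1)\), so \(2^{n-1}-1=r^\beta\). If \(\pi_-=\emptyset\), then \(t=2^m+1\) is a Fermat prime and \(t+1=2(2^{m-1}+1)\), so \(2^{m-1}+1=r^\delta\).

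It remains to solve these equations together with the primality of \(t\). When the exponent is \(1\) I would use the elementary facts that \(2^j-1\) is prime only if \(j\) is prime, and that \(2^j+1\) is prime only if \(j\) is \(0\) or a power of \(2\); requiring two such conditions at consecutive integers forces the exponent pair to be \((2,3)\), giving \(t=7\), or \((1,2)\), giving \(t=5\). When the exponent is at least \(2\), an elementary factorisation according to its parity (or, alternatively, Mihailescu's theorem) shows that \(2^{n-1}-1=r^\beta\) has no solution, while \(2^{m-1}+1=r^\delta\) forces \(r=3\), \(\delta=2\) and \(m=4\), i.e.\ \(t=17\). Assembling the cases yields \(t^a\in\{5,7,9,17\}\). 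The main obstacle is precisely this Diophantine step: excluding all the larger Mersenne and Fermat primes \(t\) for which the complementary factor \(t\mp1\) might a priori still equal \(2\) times a single prime power.
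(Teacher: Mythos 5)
Your proof is correct and takes essentially the same route as the paper: reduce to \(t\mp1=2\cdot(\text{odd prime power})\) with \(t\) a Mersenne or Fermat prime, then solve the resulting consecutive-exponent conditions to get \(7\), \(5\) and \(17\) (plus the \(t^a=9\) exception). The only cosmetic differences are that you derive part (a) directly from the primitive-prime-divisor theorem where the paper invokes Proposition~3.1 of \cite{MW}, and that you treat the prime-power equations \(2^{j}-1=r^{\beta}\) and \(2^{j}+1=r^{\delta}\) explicitly (via parity or Mih\u{a}ilescu) where the paper handles them more tersely.
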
 

\begin{proof}
Our assumptions, together with Proposition~3.1 of \cite{MW}, yield that \(a=1\) and \(t\) can be written as \(2^k-1\) or \(2^k+1\) for a suitable integer \(k>2\), with the only exception of the case \(t^a=9\) (if \(t=2^k-1\), then \(t\) is a Mersenne prime and \(k\) is a prime number, whereas if \(t=2^k+1\), then \(t\) is a Fermat prime and \(k\) is a \(2\)-power). This proves conclusion~(a).

Assuming now in addition that \(|\pi(G)|=3\), we write \(\pi(G)=\{2,t,u\}\) and we consider first the case when \(\pi_{+}\) is empty, so \(t=2^k-1\) for a suitable prime number \(k\). Then \(t-1=2^k-2=2\cdot (2^{k-1}-1)\) is divisible only by \(2\) and \(u\), which means that \(u=2^{k-1}-1\) is in turn a Mersenne prime and \(k-1\) is a prime number. This can happen only for \(k=3\), thus \(t=7\). 
On the other hand, if \(\pi_{-}\) is the empty one, then either \(t^a=9\) or \(t=2^k+1\) is a Fermat prime and \(k\) is a \(2\)-power. Consider the latter case: the fact that \(\pi_{+}\) consists of the single prime \(u\) yields that \((2^{k-1}+1)=(t+1)/2\) is a power of \(u\), so either \(u=2^{k-1}+1\) is in turn a Fermat prime (and \(k-1\) is a \(2\)-power as well) or \(u^2=9=2^{k-1}+1\). If both \(k\) and \(k-1\) are \(2\)-powers, then necessarily we get \(k=2\) and \(t=5\); it remains then the possibility \(u^2=9\), thus \(k-1=3\) and \(t=2^k+1=2^4+1=17\). 
\end{proof}

\begin{theorem}\label{pallasgonfia}
Assume that the group \(G\) has a composition factor isomorphic to $\PSL{t^a}$, for an odd prime \(t\) with \(t^a> 5\), and let \(p\) be a prime number. Assume also that \(\Delta(G)\) is connected with cut-vertex \(p\), and that one among the sets of primes \(\pi_{-}=\pi(t^a-1)-\{2\}\) and \(\pi_{+}=\pi(t^a+1)-\{2\}\) is empty.
Then, denoting by \(K\) the solvable residual of \(G\), one of the following conclusions holds.
\begin{enumeratei} 
\item \(K\) is isomorphic to \(\PSL{t^a}\) or to \(\SL{t^a}\); 
\item \(K\) contains a minimal normal subgroup \(L\) of \(G\) such that \(K/L\) is isomorphic to \(\SL{t^a}\) and \(L\) is the natural module for \(K/L\).
\end{enumeratei}
\end{theorem}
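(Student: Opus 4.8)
The plan is to reduce everything to the orbit theorems of the previous section, the bridge being a Clifford-theoretic reading of the cut-vertex condition. By Theorem~\ref{0.2} the quotient $G/R$ is almost-simple, and since $\PSL{t^a}$ is the unique non-abelian composition factor of $G$ it must be the socle $S$ of $G/R$; moreover $\V G=\pi(G/R)\cup\{p\}$. Proposition~\ref{ThreeVertices}(a) gives $a=1$, the only exception being $t^a=9$, which I would set aside and treat directly (its exceptional Schur multiplier makes Lemma~\ref{InfiniteCommutator} inapplicable). Applying Lemma~\ref{InfiniteCommutator}, either conclusion~(a) already holds, or $K$ has a non-trivial normal subgroup $L$ with $K/L\cong\SL{t^a}$ or $K/L\cong\PSL{t^a}$ such that no non-principal character of $L/L'$ is $K$-invariant; the subsequent module analysis will show that only the $\SL{t^a}$ alternative survives.

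Next I would pass to a setting where the orbit theorems can be run. Replacing $L$ by $L/L'$ reduces to $L$ abelian; choosing a chief factor $\bar L$ of $G$ inside $L$ and dualising, one gets a non-trivial irreducible $\F_q[K/L]$-module $\widehat{\bar L}$ (for a suitable prime $q$), on which $K/L$ acts, through $\SL{t^a}$ after inflating along $\SL{t^a}\to\PSL{t^a}$ in the projective case. Writing $C=\cent G L$, the group $K/(K\cap C)$ acts faithfully, with $\cent{\widehat{\bar L}}{K}=0$. The aim is to identify $\widehat{\bar L}$ with the natural module: since that module is irreducible and self-dual, this will force $\bar L=L$, hence make $L$ a minimal normal subgroup of $G$ equal to the natural module, and will also exclude the $\PSL{t^a}$ alternative (the natural module being faithful for $\SL{t^a}$).

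The heart of the argument---and the step I expect to be the main obstacle---is to turn the cut-vertex hypothesis into a covering of $\widehat{\bar L}-\{0\}$ by the sets $V_{I_\pm}$, $V_{II}$ of Theorems~\ref{TipoIeIIPieni} and~\ref{TipoIeII}. One exploits that, by Lemma~\ref{PSL2bis}(ii), inside $\Delta(\PSL{t^a})$ the characteristic $t$ is already an isolated component while the remaining primes form a single complete component (recall that one of $\pi_-,\pi_+$ is empty); together with Theorem~\ref{MoretoTiep}, governing the primes in $\pi(G/R)-\pi(S)$, this determines where edges of $\Delta(G)$ may appear. By Clifford theory, a non-principal $\lambda\in\widehat{\bar L}$ whose inertia factor in $K/L$ fails to contain a normal Sylow $t$-subgroup or a normal Sylow $r$-subgroup (for the relevant odd $r$) induces a character of $G$ whose degree joins $t$ to a prime other than $p$, or joins two primes lying on opposite sides of the cut-vertex; either violates the cut-vertex structure of $\Delta(G)$. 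Hence every non-trivial $\lambda$ lies in $V_{I_\pm}\cup V_{II}$. Making this implication exact---tracking inertia groups, recording which orbit sizes are divisible by $t$, and using Lemma~\ref{PSL2} for the two Sylow $t$-subgroups normalised by $R$---is the delicate point.

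Finally I would feed this covering into the orbit theorems. If $q\neq t$, Theorem~\ref{TipoIeIIPieni} shows $\widehat{\bar L}-\{0\}$ cannot be covered by $V_{I_-}\cup V_{II}$ (respectively $V_{I_+}\cup V_{II}$), its sporadic exceptions $t^a\in\{5,13\}$ being excluded in the present regime since $t^a>5$ and, for $t^a=13$, both $\pi_-$ and $\pi_+$ are non-empty; thus $q=t$. In defining characteristic $\widehat{\bar L}$ is a $t$-group and Theorem~\ref{TipoIeII} applies: as $t$ is odd with $t^a>5$, the exceptional $\SL4$-over-$\F_2$ configuration is impossible, so $V_I$ and $V_{II}$ cannot both be non-empty. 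A covering $\widehat{\bar L}-\{0\}=V_I$ would mean the pair satisfies $\mathcal{N}_r$ for an odd $r\neq t$, which Lemma~\ref{SL2Nq} forbids for $t^a\notin\{5,13\}$; therefore $\widehat{\bar L}-\{0\}=V_{II}$, the pair satisfies $\mathcal{N}_t$, and Lemma~\ref{SL2Nq} identifies $\widehat{\bar L}$ as the natural module. By self-duality and irreducibility, $L$ is the natural module for $K/L\cong\SL{t^a}$ and is minimal normal in $G$, which is conclusion~(b).
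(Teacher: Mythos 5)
Your overall strategy (reduce to inertia-group conditions via Clifford theory, then invoke Lemma~\ref{SL2Nq} and the orbit theorems) is the paper's strategy, but there are two genuine gaps. The first is the step ``since that module is irreducible and self-dual, this will force $\bar L=L$'': this is a non sequitur. Identifying \emph{one} chief factor $\bar L$ inside $L$ as the natural module says nothing about $L/\bar L$, so it cannot force $\bar L=L$ (nor minimality of $L$ in $G$); for instance, nothing in your argument excludes $L/L'$ being a sum of two copies of the natural module, or $L$ being a non-abelian $t$-group whose Frattini quotient is the natural module. The paper avoids this by running the inertia analysis over \emph{every} non-principal $\lambda\in\irr{L/L'}$ (not over a single chief factor), so that the pair $(K/L,\irr{L/L'})$ satisfies $\mathcal{N}_t$; irreducibility of $L/L'$ then comes for free from \cite[Lemma~4]{Z} via Lemma~\ref{SL2Nq}. (Only in the $p=2$ branch, where the inertia condition is the weaker ``normal Hall $\pi$-subgroup or normal Sylow $t$-subgroup,'' does the paper pass to chief factors $L/Y$ --- and then only to prove that $L/L'$ is a $t$-group before applying Theorem~\ref{TipoIeII} to all of $L/L'$.)

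The second gap is that the conclusion requires $L'=1$, and your opening reduction ``replacing $L$ by $L/L'$ reduces to $L$ abelian'' does not deliver this: it only lets you identify $L/L'$. Roughly half of the paper's proof is devoted to showing $U=L'=1$, and this is not routine: for $p\neq 2$ it uses \cite[Theorem~A]{NT} on characters of odd degree, and for $p=2$ it splits into three cases according to the position of a chief factor $U/Z$ relative to $\zent{L/Z}$ and $\zent{K/Z}$, invoking Curran's splitting criterion for ${\rm H}^2$, Wolf's lemma to show $\lambda$ is fully ramified with respect to $L/U$, and explicit module computations for $\SL 9$. Relatedly, the ``delicate point'' you flag (turning the cut-vertex hypothesis into the covering) conceals more than extendibility bookkeeping: the paper must rule out inertia factors of types (i)--(iv) case by case, and several small configurations (e.g.\ the $\F_2$-modules of $\SL 7$ and $\SL 9$, and the inertia factor $A_4$ or $A_5$ cases) survive the general Gallagher/character-triple dichotomy and are only eliminated by ad hoc arguments or direct computation. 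As written, the proposal establishes at best that some chief factor of $G$ inside $L$ is the natural module, which is strictly weaker than conclusion (b).
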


\begin{proof}
Let $R$ be the solvable radical of $G$. Observe that, by Theorem~\ref{0.2}, the factor group \(G/R\) is an almost-simple group (with socle isomorphic to \(\PSL{t^a}\)) and \(\V G=\V{G/R}\cup\{p\}\); furthermore,  Proposition~\ref{ThreeVertices} yields that \(a=1\) with the only exception of the case \(t^a=9\), therefore \(\V G\) consists of \(p\) and the primes dividing the order of the socle of \(G/R\). As the subgraph of \(\Delta(G)\) induced by \(\V G-\{t,p\}\) is then a complete subgraph, it is clear that \(p\neq t\) and that the set of neighbours of \(t\) in \(\Delta(G)\) is \(\{p\}\). As another consequence of the fact that \(a=1\) or \(t^a=9\), the socle of \(G/R\)  does not have any proper subgroup of type~(iv) except when \(t^a=9\), in which case the relevant subgroups (isomorphic to \(A_4\) or \(S_4\)) are in fact also of type~(iii).

Consider now the solvable residual \(K\) of \(G\) and assume \(t^a\neq 9\). By Lemma~\ref{InfiniteCommutator} we can assume that, for a suitable non-trivial normal subgroup \(L\) of \(K\), we have \(K/L\cong\PSL{t^a}\) or \(K/L\cong\SL{t^a}\); moreover, for every non-principal irreducible character \(\lambda\) of \(L/L'\), the inertia subgroup \(I_K(\lambda)\) is a proper subgroup of \(K\) (and the same clearly holds also for \(I_K(\lambda)N\), where \(N/L=\zent{K/L}\)). 

We claim that the conclusions of Lemma~\ref{InfiniteCommutator} actually hold even if \(t^a=9\). In fact, as in the proof of that lemma, consider a chief factor \(K/N\cong\PSL{9}\) of \(G\) and suppose (as we may) \(N\neq 1\); if \(\mu\) is a non-principal \(K\)-invariant irreducible character of \(N/N'\) and \(L=\ker\mu\), then (\(L\trianglelefteq K\) and) \(N/L\) lies in the Schur multiplier of \(K/N\), that has order \(6\). But if \(K/L\) is a central extension of \(N/L\) by \(K/N\cong\PSL{9}\) such that \(3\) divides \(|N/L|\), then it can be checked that the set of irreducible character degrees of \(K/L\) contains both \(6\) and \(15\), thus \(\Delta(K/L)\) is a triangle of vertices \(2,3,5\) and \(\Delta(G)\) does not satisfy the hypothesis. The conclusion so far is that \(K/L\cong\SL{9}\), and again we can assume \(L\neq 1\). Now, since the Schur multiplier of \(\SL{9}\) has order \(3\), but the Schur cover of \(\SL{9}\) has irreducible characters of degree \(6\) and \(15\), arguing as above we see that \(L/L'\) does not have any \(K\)-invariant irreducible character.

Given that, the proof is organized as follows. First, we will show that in the present setting \(K/L\) is isomorphic to \(\SL{t^a}\) and \(L/L'\) is the natural module for \(K/L\); we will treat separately four cases, depending on whether \(|\pi(K/N)|\) is larger than \(3\) or not, and whether \(p\neq 2\) or \(p=2\). After that, we will prove that \(L'\) is in fact trivial. This, together with the observation that in this situation \(L=\oh t K\) (therefore \(L\) is actually a normal subgroup of \(G\)) and \(L\) is already a minimal normal subgroup of \(K\), will conclude the proof.

\medskip
Consider first the case when \(|\pi(K/N)|\) is larger than \(3\), i.e. the non-empty set among \(\pi_{-}\) and \(\pi_{+}\) does not consist of a single prime (observe that the case \(t^a=9\) is then excluded, so we always have \(a=1\)), and \(p\neq 2\). In what follows, we denote by \(\pi\) the non-empty set among \(\pi_+\) and \(\pi_-\) (in other words, we set \(\pi=\pi_+\cup\pi_-\)). 

Let \(\lambda\) be a non-principal irreducible character of \(L/L'\), and assume that \(t\) divides \(|K:I_K(\lambda)|\). Then \(I_K(\lambda)/L\) must contain a Sylow \(2\)-subgroup of \(K/L\), as otherwise \(t\) would be adjacent to \(2\) in \(\Delta(G)\) (recall that \(p\neq 2\) is the only neighbour of \(t\) in \(\Delta(G)\)); in particular, \(N\) is contained in \(I_K(\lambda)\). In this situation, if \(I_K(\lambda)/N\) is of type~(i), then it is easy to see that \(t\) is adjacent in \(\Delta(G)\) to every prime in \(\pi\), which implies \(\pi=\{p\}\) and \(|\pi(K/N)|=3\), not the case we are considering. It is also clear that \(I_K(\lambda)/N\) cannot be of type~(ii). Finally, if \(I_K(\lambda)/N\) is of type (iii), then the largest power of \(2\) that divides \(|K/N|\) is \(2^2\) or \(2^3\); since it is easily seen that, writing \(t=2^k\pm 1\), the \(2\)-part of \(|K/N|\) is \(2^k\), we get either  \(k=2\) (but recall that \(k>2\) in the present situation) or \(k=3\), \(t=7\) and \(\V G=\{2,3,7\}\), in any case a contradiction.

On the other hand, assume that \(t\) does not divide \(|K:I_K(\lambda)|\). Then \(I_K(\lambda)N/N\) is not of type~(i); moreover, if this factor group is of type~(iii), then we get \(t=3\) or \(t=5\), against our hypothesis. The only possibility is then the type (ii): in particular, \emph{\(I_K(\lambda)/L\) contains a Sylow \(t\)-subgroup of \(K/L\) as a normal subgroup.}
Since this holds for every non-principal irreducible character \(\lambda\) of \(L/L'\), by Lemma~\ref{SL2Nq} we conclude that \(K/L\cong\SL{t}\) and \(L/L'\) is the natural module, as desired.

\smallskip
Still assuming \(|\pi(K/N)|>3\), we move now to the case \(p=2\).

Again, let \(\lambda\) be a non-principal irreducible character of \(L/L'\). If \(t\) divides \(|K:I_K(\lambda)|\), then the factor group \(I_K(\lambda)N/N\) is clearly not a subgroup of type (ii) with non-trivial \(t\)-part of \(K/N\). But this factor group is not of type (iii) as well: in fact, if \(I_K(\lambda)N/N\) is isomorphic to \(A_4\) or \(S_4\), then \(\pi\) would consist of the element \(3\) only, whereas if \(I_K(\lambda)N/N\cong A_5\), then  (by Gallagher's theorem or by the theory of character triples, according to whether \(\lambda\) has an extension to its inertia subgroup or not) we see that \(t\) is adjacent to \(3\) in \(\Delta(G)\), anyway a contradiction. Therefore \(I_K(\lambda)N/N\) must be of type (i), containing a  Hall \(\pi\)-subgroup of \(K/N\) as a normal subgroup; as easily checked, we then have that \(I_K(\lambda)/L\) contains a Hall \(\pi\)-subgroup of \(K/L\) as a normal subgroup.

On the other hand, if \(t\) does not divide \(|K:I_K(\lambda)|\), then \(I_K(\lambda)N/N\) is a subgroup of type (ii) of \(K/N\), and \(I_K(\lambda)/L\) contains a Sylow \(t\)-subgroup of \(K/L\) as a normal subgroup.

Now, assume that \(L/L'\) is not a \(t\)-group. Then there exists a chief factor \(L/Y\) of \(K\) that is a \(q\)-group for a suitable prime \(q\neq t\). Denoting by \(V\) the dual group of \(L/Y\), the discussion carried out so far yields that for every non-trivial element \(\lambda\) of \(V\), the factor group \(I_K(\lambda)/L\) contains either a unique Hall \(\pi\)-subgroup or a unique Sylow \(t\)-subgroup of \(K/L\). This is against Theorem~\ref{TipoIeIIPieni}, thus we conclude that \(L/L'\) is a \(t\)-group,
and we are in a position to apply Theorem~\ref{TipoIeII} obtaining that the subgroups \(I_K(\lambda)/L\) (for \(\lambda\) a non-principal character in \(\irr{L/U}\)) are all of the same kind: either they all contain a unique Hall \(\pi\)-subgroup of \(K/L\) or they all contain a unique Sylow \(t\)-subgroup of \(K/L\). However, by Lemma~\ref{SL2Nq} (and recalling \cite[Lemma~4]{Z}) the former condition is impossible and \(L/L'\) is the natural module for \(K/L\cong\SL{t}\), as desired. 

\smallskip
Now, let us consider the case when \(|\pi(K/N)|=3\), so \(\pi(K/N)=\{2,u,t\}\) for a suitable prime \(u\). Recall that, by Proposition~\ref{ThreeVertices}, \(K/N\) is then isomorphic to one of the following groups: \(\PSL 7\), \(\PSL 9\) or \(\PSL{17}\).

We assume first that, under the hypothesis \(|\pi(K/N)|=3\), we have \(p\neq 2\). Thus, if \(\lambda\) is a non-principal irreducible character of \(L/L'\) and \(t\) divides \(|K:I_K(\lambda)|\), then \(2\) cannot divide the degree of any irreducible character of \(K\) lying over \(\lambda\). In particular, \(K/L\) should have an abelian Sylow \(2\)-subgroup by Theorem~A of \cite{NT}, and this is not the case. Looking at \cite{ATLAS}, the only remaining possibility is that \(I_K(\lambda)/L\) contains a Sylow \(t\)-subgroup of \(K/L\) as a normal subgroup. This holds for every non-principal \(\lambda\in\irr{L/L'}\), and again by Lemma~\ref{SL2Nq} we have that \(L/L'\) is the natural module for \(K/L\cong\SL {t^a}\).

\smallskip
Finally let \(|\pi(K/N)|=3\) and \(p=2\). Assume first \(K/N\cong\PSL 7\): in this case we have \(\V G=\{2,3,7\}\) and the only non-adjacency in \(\Delta(G)\) is between \(3\) and \(7\). Now if, for every non-principal \(\lambda\) in \(\irr{L/L'}\), we have that \(7\) does not divide \(|K:I_K(\lambda)N|\), then \(I_K(\lambda)/L\) contains a Sylow \(7\)-subgroup of \(K/L\) as a normal subgroup and \(L/L'\) is the natural module for \(K/L\cong\SL 7\) (by Lemma~\ref{SL2Nq}); therefore, in view of the structure of the maximal subgroups of \(\PSL{7}\), we can assume that there exists \(\lambda_0\in\irr{L/L'}\) such that \(I_K(\lambda_0)N/N\) is isomorphic to a subgroup of \(S_4\). Of course the order of \(I_K(\lambda_0)N/N\) must be a multiple of \(3\), so either \(I_K(\lambda_0)/L\) contains a Sylow \(3\)-subgroup of \(K/L\) as a normal subgroup, or it has a normal section isomorphic to \(A_4\). In the latter case \(\lambda_0\) cannot extend to \(I_K(\lambda_0)\), as otherwise we would get the adjacency between \(3\) and \(7\) by Clifford's theory; hence \(|L/L'|\) is an even number, and there exists a chief factor \(L/Y\) of \(K\) that is a \(2\)-group. But it can be checked (via GAP \cite{GAP}, for instance) that all the three irreducible modules of \(\SL 7\) over \(\F_2\) (that have dimensions \(3\), \(3\), \(8\) respectively) produce factor groups \(K/Y\) having irreducible characters of degree divisible by \(21\), and this is a contradiction. By Ito-Michler's theorem and Gallagher's theorem we conclude that, for every non-principal \(\lambda\in\irr{L/L'}\), \(I_K(\lambda)/L\) contains either a Sylow \(7\)-subgroup or a Sylow \(3\)-subgroup of \(K/L\) as a normal subgroup. If \(L/L'\) is not a \(7\)-group, then we can consider a chief factor \(L/Y\) of \(K\) such that \(L/Y\) is a \(q\)-group for a suitable prime \(q\neq 7\), and Theorem~\ref{TipoIeIIPieni} applied to the action of \(K/L\) on the dual group of \(L/Y\) yields a contradiction. Therefore \(L/L'\) is a \(t\)-group, and now by Theorem~\ref{TipoIeII} (together with Lemma~\ref{SL2Nq}) we get that \(L/L'\) is in fact the natural module for \(K/L\cong\SL 7\).

Assume now \(K/N\cong\PSL 9\), thus \(\V G=\{2,3,5\}\) and the only non-adjacency in \(\Delta(G)\) is between \(3\) and \(5\). If, for every non-principal \(\lambda\) in \(\irr{L/L'}\), we have that \(3\) does not divide \(|K:I_K(\lambda)N|\), then \(I_K(\lambda)/L\) contains a Sylow \(3\)-subgroup of \(K/L\) as a normal subgroup and \(L/L'\) is the natural module for \(K/L\cong\SL 9\); therefore, we can assume that there exists \(\lambda_0\in\irr{L/L'}\) such that \(I_K(\lambda_0)N/N\) is isomorphic to a subgroup of \(A_5\). Of course the order of \(I_K(\lambda_0)N/N\) must be a multiple of \(5\), so either \(I_K(\lambda_0)/L\) contains a Sylow \(5\)-subgroup of \(K/L\) as a normal subgroup, or it is isomorphic to \(A_5\). In the latter case \(\lambda_0\) cannot extend to \(I_K(\lambda_0)\), as otherwise we would get the adjacency between \(3\) and \(5\) by Clifford's theory; hence \(|L/L'|\) is an even number, and there exists a chief factor \(L/Y\) of \(K\) that is a \(2\)-group. But it can be checked (via GAP \cite{GAP}, for instance) that all the three irreducible modules of \(\SL 9\) over \(\mathbb{F}_2\) (that have dimensions \(4\), \(4\), \(16\) respectively) produce factor groups \(K/Y\) having irreducible characters of degree divisible by \(15\), and this is a contradiction. We conclude that, for every non-principal \(\lambda\in\irr{L/L'}\), \(I_K(\lambda)/L\) contains either a Sylow \(3\)-subgroup or a Sylow \(5\)-subgroup of \(K/L\) as a normal subgroup; the same argument as in the paragraph above shows that \(L/L'\) is the natural module for \(K/L\cong\SL 9\).

Finally, consider the case \(K/N\cong\PSL {17}\), hence \(\V G=\{2,3,17\}\) and the only non-adjacency in \(\Delta(G)\) is between \(3\) and \(17\). If, for every non-principal \(\lambda\) in \(\irr{L/L'}\), we have that \(17\) does not divide \(|K:I_K(\lambda)N|\), then \(I_K(\lambda)/L\) contains a Sylow \(17\)-subgroup of \(K/L\) as a normal subgroup and \(L/L'\) is the natural module for \(K/L\cong\SL {17}\); therefore, we can assume that there exists \(\lambda_0\in\irr{L/L'}\) such that \(I_K(\lambda_0)N/N\) has order not divisible by \(17\), but divisible by \(3^2\). An inspection of the subgroups of \(\PSL {17}\) yields that \(I_K(\lambda_0)N/N\) is either a cyclic group of order \(3^2\) or it is a dihedral group of order \(18\), so, in any case it contains a Sylow \(3\)-subgroup of \(K/N\) as a normal subgroup and we can get the conclusion that \(L/L'\) is the natural module for \(K/L\cong\SL{17}\) as in the previous cases.

\medskip
The proof that \(L/L'\) is the natural module for \(K/N\cong\SL{t^a}\) is then concluded.
It remains to show that, setting \(U=L'\), we have \(U=1\). This will also imply that \(L\) is normal in \(G\), because if \(U=1\) then we have \(L=\oh t K\). Again we will treat separately the cases \(p\neq 2\) and \(p=2\). 

\smallskip
Assume \(p\neq 2\) and, aiming at a contradiction, assume \(U\neq 1\). Since \(U/U'\) is an abelian (non-trivial) normal subgroup of \(L/U'\) having \(t\)-power index, any non-linear irreducible character \(\phi\) of \(L/U'\) is such that \(t\) divides \(\phi(1)\). Now, we know that every irreducible character of \(K\) lying over \(\phi\) must have odd degree, and so \cite[Theorem~A]{NT} yields that \(K/L\) has abelian Sylow \(2\)-subgroups. This is clearly a contradiction, and the proof is complete.

\smallskip
Finally, let us consider the case \(p=2\) and, again assuming \(U\neq 1\), let us take a subgroup \(Z\) of \(K\) such that \(U/Z\) is a chief factor of \(K\). We treat three situations, that are exhaustive, and that all lead to a contradiction.

{\bf{(i)}} \(U/Z\not\leq\zent{L/Z}.\) 

\noindent(Note that, in this case, \(U/Z\) is a faithful \(K/U\)-module.) Consider the normal subgroup \(\cent{U/Z}{L/U}\) of \(K/Z\); since \(U/Z\) is a chief factor of \(K\) and it is not centralized by \(L/U\), we deduce that \(\cent{U/Z}{L/U}\) is trivial. We are in a position to apply the proposition appearing in the Introduction of \cite{Cu}, which ensures that the second cohomology group \({\rm {H}}^2(K/U,U/Z)\) is trivial, and therefore \(K/Z\) is a split extension of \(U/Z\); in particular, every irreducible character of \(U/Z\) extends to its inertia subgroup in \(K/Z\). Now, let \(\lambda\) be any non-principal character in \(\irr{U/Z}\): if \(\xi\in\irr{L/Z}\) lies over \(\lambda\), then \(\xi(1)\neq 1\) (as \(U/Z=(L/Z)'\)) and \(\xi(1)\) is a divisor of \(|L/U|=t^2\). In particular, every irreducible character of \(K\) lying over \(\lambda\) has a degree divisible by \(t\). Since \(\lambda\) extends to \(I_K(\lambda)/Z\), our assumptions together with Gallagher's theorem imply that \(I_K(\lambda)/U\) contains a unique Hall \(\pi\)-subgroup of \(K/U\). But this yields a contradiction via, for example, Proposition~3.13 of \cite{DPSS}; in fact, according to that result, \(K/U\) should have a cyclic solvable radical (whereas \(L/U=\oh t{K/U}\) is non-cyclic).

{\bf{(ii)}} \(U/Z\leq\zent{L/Z}\), but \(U/Z\not\leq\zent{K/Z}.\)

\noindent  
Let \(\lambda\) be a non-principal character in \(\irr{U/Z}\), and let \(\xi\in\irr{L/Z}\) be a character lying over \(\lambda\). Clearly \(\xi(1)\) is a multiple of \(t\) and, assuming for the moment \(t^a\neq 9\), \(\xi\) extends to its inertia subgroup in \(K\) because \(K/L\) has cyclic Sylow \(t\)-subgroups (\cite[8.16, 11.22, 11.31]{Is}). It follows by Gallagher's theorem that \(I_K(\xi)\) contains a Hall \(\pi\)-subgroup of \(K/L\) as a normal subgroup. 
Now, \(\xi_U=\xi(1)\lambda\) and \(\xi(1)^2\leq t^2\), therefore \(\xi(1)=t\) and \(\lambda\) is fully ramified with respect to \(L/U\). In particular, \(\xi\) vanishes outside \(U\), thus \(I_K(\lambda)=I_K(\xi)\) and so \(I_K(\lambda)/L\) contains a Hall \(\pi\)-subgroup of \(K/L\) as a normal subgroup. This is against Lemma~\ref{SL2Nq}, and we are done under the additional assumption \(t^a\neq 9\).

As regards the remaining case, we proceed as follows. Observe first that \(U/Z\) is a \(3\)-group, as otherwise \(L/Z\) would be the direct product of \(U/Z\) with an abelian \(3\)-group, and it would be abelian. Moreover, if  \(\lambda\) is a non-principal character in \(\irr{U/Z}\), then any irreducible character of \(K\) lying over \(\lambda\) has a degree divisible by \(3\); as a consequence, \(|K:I_K(\lambda)|\) is not divisible by \(5\). But a direct computation on the non-trivial irreducible modules for \(\SL{9}\) over \(\F_{3}\) (there are five of them, of dimensions \(4\), \(4\), \(6\), \(9\), \(12\)) shows that in every such module there are elements lying in orbits of size divisible by \(5\), the final contradiction that concludes the proof for this case. 

{\bf{(iii)}} \(U/Z\leq\zent{K/Z}.\)

\noindent Let \(\lambda\) be a non-principal character in \(\irr{U/Z}\). We have that \((L/Z,U/Z,\lambda)\) is a character triple for which the factor group \((L/Z)/(U/Z)\cong L/U\) is abelian, thus we can apply Lemma~2.2 of \cite{Wo}: there exists a unique subgroup \(W/Z\) of \(L/Z\), containing \(U/Z\), maximal with respect to the fact that \(\lambda\) has an \(L/Z\)-invariant extension to \(W/Z\). By the uniqueness of this \(W/Z\) and by the fact that \(\lambda\) is invariant in \(K\), it follows that \(W/Z\) is normal in \(K/Z\) and, since it is properly contained in \(L/Z\) (because \(\lambda\) does not extend to \(L\)), we get \(W=U\). Now part (b) of the same lemma yields that \(\lambda\) is fully ramified with respect to \(L/U\), and therefore the unique \(\xi\) in \(\irr{L/Z\mid\lambda}\) is such that \(I_K(\xi)=I_K(\lambda)=K\). 

Now, if \(\xi\) (whose degree is divisible by \(t\)) extends to \(K\), then we reach a contradiction via Gallagher's theorem. This always happens when the Schur multiplier of \(K/L\) is trivial, i.e. when \(t^a\neq 9\). As for the remaining case, working with character triples, we see that there exists an irreducible character of \(K\) lying over \(\xi\) whose degree is divisible by \(5\) (actually, by \(15\)) even if \(\xi\) does not extend to \(K\). This is impossible in our setting, and the proof is complete.
\end{proof}

We consider now, still for \(t\) odd and \(t^a\neq 5\), the complementary situation with respect to previous result. This will conclude the proof of Theorem~\ref{MainAboutK}.

\begin{theorem}\label{pallepiene}
Assume that the group \(G\) has a composition factor isomorphic to $\PSL{t^a}$, for an odd prime \(t\) with \(t^a> 5\), and let \(p\) be a prime number. Assume also that \(\Delta(G)\) is connected with cut-vertex \(p\), and that both the sets of primes \(\pi_{-}=\pi(t^a-1)-\{2\}\) and \(\pi_{+}=\pi(t^a+1)-\{2\}\) are non-empty. Then, denoting by \(K\) the solvable residual of \(G\), one of the following conclusions holds.
\begin{enumeratei} 
\item \(K\) is isomorphic to \(\PSL{t^a}\) or to \(\SL{t^a}\); 
\item \(K\) contains a minimal normal subgroup \(L\) of \(G\) such that \(K/L\) is isomorphic to \(\SL{t^a}\) and \(L\) is the natural module for \(K/L\).
\item \(t^a=13\) and \(p=2\). \(K\) contains a minimal normal subgroup \(L\) of \(G\) such that \(K/L\) is isomorphic to \(\SL{13}\), and \(L\) is isomorphic to one of the two \(6\)-dimensional irreducible modules for \(\SL{13}\) over \(\F_3\).
\end{enumeratei}
\end{theorem}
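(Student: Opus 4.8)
The plan is to run the proof of Theorem~\ref{pallasgonfia} in parallel, replacing its one-sided analysis by the two-sided orbit result Theorem~\ref{TipoIeIIPieni}, which is designed exactly for the present hypothesis that both $\pi_+$ and $\pi_-$ contain odd primes. By Theorem~\ref{0.2}, $G/R$ is almost-simple with socle $S\cong\PSL{t^a}$ and $\V G=\pi(G/R)\cup\{p\}$; since $\pi_+,\pi_-\neq\emptyset$ we have $t^a\notin\{5,9\}$, so Lemma~\ref{InfiniteCommutator} applies. Its conclusion~(a) yields conclusion~(a), so I assume a non-trivial $L\nor K$ with $K/L\cong\PSL{t^a}$ or $\SL{t^a}$ and every non-principal $\lambda\in\irr{L/L'}$ non-invariant in $K$; set $N/L=\zent{K/L}=(K\cap R)/L$, so $K/N\cong\PSL{t^a}$. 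The engine of the argument is that $\Delta(S)\subseteq\Delta(G)$ and that, for non-principal $\lambda$, the prime divisors of $|K:I_K(\lambda)|$ form a clique of $\Delta(G)$, as they divide a common degree of $\irr{K\mid\lambda}$ and hence of $\cd G$; writing $\bar I=I_K(\lambda)N/N$, Remark~\ref{Subgroups} shows $t\nmid|K:I_K(\lambda)|$ precisely when $\bar I$ has type (ii).

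Suppose first $p\neq2$. Then $2$ survives in $\Delta(G)-p$ and, by Lemma~\ref{PSL2bis}(ii) and Theorem~\ref{MoretoTiep}, is adjacent there to every prime of $\pi(G/R)-\{t,p\}$; as $\V G-\{p\}=(\pi(G/R)-\{t,p\})\cup\{t\}$ and $p$ is a cut-vertex, $\{t\}$ is forced to be a whole component of $\Delta(G)-p$, i.e. $t$ is adjacent only to $p$. If $\bar I$ is of type (i), with $|\bar I|$ dividing (say) $t^a-1$, then $|K:I_K(\lambda)|$ is divisible by $t$ and by every odd prime of $t^a+1$, so the clique condition forces $\pi_+=\{p\}$; the other type (i) possibility symmetrically forces $\pi_-=\{p\}$. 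As the odd prime $p$ lies in at most one of $\pi_+,\pi_-$, at most one type (i) alternative can occur. (Types (iii) and (iv) are discarded here, and in the next paragraph, by matching the primes and degrees they force through Clifford's theorem against the non-adjacencies of $t$, using Theorem~A of \cite{NT} where convenient.) Passing to the dual $K/L$-module $V$ of $L/L'$, each non-trivial $\lambda\in V$ thus lies in $V_{II}$ or in at most one of $V_{I_-},V_{I_+}$, so $V-\{0\}$ is covered by two strata; Theorem~\ref{TipoIeIIPieni} forbids a genuine two-stratum covering, and Lemma~\ref{SL2Nq} leaves only $V=V_{II}$ (the natural module, conclusion~(b)) or $V=V_{I_-}$ (the $\SL{13}$ exception, with $L/L'$ the $6$-dimensional $\F_3$-module, conclusion~(c)); the option $V=V_{I_+}$ would require $\SL 5$ and is impossible.

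When $p=2$ the clean reduction ``$t$ has the unique neighbour $p$'' is unavailable, and I would work on each chief factor $V=L/Y$ of $K$ according to its characteristic $q$. The clique condition and Remark~\ref{Subgroups} still confine $\bar I$ to types (i) and (ii), so every non-trivial element of $V$ lies in $V_{I_-}\cup V_{I_+}\cup V_{II}$. For $q\notin\{2,t\}$ the two-stratum mechanism of Theorem~\ref{TipoIeIIPieni} again yields either the natural module or the $\SL{13}$ configuration; for $q=2$ I invoke Lemma~\ref{brauer}, which supplies elements whose stabilizer has order coprime to the relevant odd prime and so breaks any such covering outside its short list of small modules; and for $q=t$ Theorem~\ref{TipoIeII}---whose alternative~(b) forces $t=2$ and is vacuous for odd $t$---excludes a genuine covering, forcing $V-\{1\}=V_{II}$ and, by Lemma~\ref{SL2Nq}, the natural module. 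In all cases $K/L\cong\SL{t^a}$ and $L/L'$ is either the natural module or the $6$-dimensional $\F_3$-module for $\SL{13}$.

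It remains to prove $L'=1$; then $L=\oh q K$ for the prime $q$ with $L$ a $q$-group, so $L\nor G$ and, by the irreducibility of the action, $L$ is minimal normal in $G$, completing (b) and (c). As at the end of the proof of Theorem~\ref{pallasgonfia}, I choose a chief factor $U/Z$ of $K$ inside $U=L'$ and rule out the three cases $U/Z\not\leq\zent{L/Z}$, $U/Z\leq\zent{L/Z}$ with $U/Z\not\leq\zent{K/Z}$, and $U/Z\leq\zent{K/Z}$, using respectively the cohomological splitting criterion of \cite{Cu} together with Proposition~3.13 of \cite{DPSS} (a non-cyclic $\oh q{K/U}$ against a forced cyclic radical), the fully-ramified lemma (Lemma~2.2 of \cite{Wo}), and a character-triple degree computation. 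I expect the main obstacle to be the inertia-subgroup bookkeeping underlying the $p=2$ branch: there the exponent $a$ may exceed $1$, so subfield subgroups (type (iv)) are a priori present, $\pi_+$ and $\pi_-$ inject odd primes simultaneously, and one must match the admissible coverings precisely to the two-stratum hypotheses of Theorems~\ref{TipoIeIIPieni} and~\ref{TipoIeII} (and to Lemma~\ref{brauer} in characteristic $2$) while still allowing the extra $t$-adjacency responsible for case~(c).
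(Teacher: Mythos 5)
Your outline does track the paper's own strategy (Theorem~\ref{0.2} and Lemma~\ref{InfiniteCommutator} for the set-up, the clique condition on the primes dividing \(|K:I_K(\lambda)|\) read against Remark~\ref{Subgroups}, the orbit theorems together with Lemma~\ref{SL2Nq} and Lemma~\ref{brauer}, and a final reduction to \(L'=1\)), but two steps that you compress into asides are exactly where the proof has to work, and one of them, as stated, does not go through. In the \(p=2\) branch you conclude that every non-trivial element of the chief factor \(V\) lies in \(V_{I_-}\cup V_{I_+}\cup V_{II}\) and then appeal to ``the two-stratum mechanism of Theorem~\ref{TipoIeIIPieni}''. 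That theorem only excludes coverings of \(V-\{0\}\) by \emph{two} of the three sets; it is silent about a covering in which all three strata genuinely occur. You first need the paper's intermediate reduction: a type (i$_-$) inertia quotient forces \(t\) to be adjacent in \(\Delta(K)\) to every prime of \(\pi_+\) (and symmetrically), and the cut-vertex hypothesis combined with Lemma~\ref{PSL2bis} and Theorem~\ref{MoretoTiep} shows that the odd neighbours of \(t\) in \(\pi(t^{2a}-1)\) must lie all in \(\pi_-\) or all in \(\pi_+\), with no edges between \(\pi_-\) and \(\pi_+\); only this kills one of the two type (i) flavours and puts you in the two-stratum situation. (Your \(p\neq 2\) paragraph does make the analogous reduction via ``\(\pi_+=\{p\}\) versus \(\pi_-=\{p\}\)''.)

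Second, the inertia quotients of types (iii) and (iv) cannot be ``discarded by matching primes and degrees'' in a parenthesis. The type (iv) exclusion needs character triples and the Schur multipliers of the subfield subgroups, with a separate treatment of \(\PSL{9}\) and \({\rm PGL}_2(9)\); and type (iii) is \emph{not} excluded outright: it is precisely the case that forces \(V\) to be a \(2\)-group with the relevant set \(\pi_{\mp}\) contained in \(\{3,5\}\), after which every non-trivial \(\mu\in\irr V\) is shown to be centralized by an element of order \(r\in\{3,5\}\) and Lemma~\ref{brauer} finishes. So your ``\(q=2\)'' case is not an independent branch on the characteristic of the chief factor but the continuation of the type (iii) analysis, and without that analysis you have no source for the covering hypothesis that Lemma~\ref{brauer} refutes. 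Finally, the closing step \(L'=1\) cannot simply be imported from Theorem~\ref{pallasgonfia}: the case (i)/(ii) arguments there rest on \(\pi_-\cup\pi_+\) being governed by a single normal Hall subgroup of a type (i) or type (ii) inertia quotient, which is available only because one of the two sets is empty there --- exactly what fails under the present hypothesis. The paper instead uses \cite[Theorem~A]{NT} and an \(A_5\)-inertia analysis to get \(L'=1\) in case (b), and for case (c) a dedicated argument with the maximal subgroup indices \(\{14,78,91\}\) of \(\PSL{13}\) followed by \cite[Lemma~4]{Z} to obtain \(|L|=3^6\) and the irreducibility of \(L\).
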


\begin{proof}
Denoting by \(R\) be the solvable radical of \(G\), Theorem~\ref{0.2} yields that \(G/R\) is an almost-simple group with socle isomorphic to \(\PSL{t^a}\), and \(\V G=\pi(G/R)\cup\{p\}\). Note that Lemma~\ref{InfiniteCommutator} applies here, because \(t^a=9\) is excluded by our assumption of \(\pi_-\) being non-empty; so, either we get conclusion (a), or \(K\) has a non-trivial normal subgroup \(L\) such that \(K/L\) is isomorphic to \(\PSL{t^a}\) or to \(\SL{t^a}\), and every non-principal irreducible character of \(L/L'\) is not invariant in \(K\). Therefore, we can assume that the latter condition holds.

Consider then a non-principal \(\lambda\) in \(\irr{L/L'}\): as we said, the inertia subgroup \(I_K(\lambda)\) is a proper subgroup of \(K\) and the same clearly holds for \(I_K(\lambda)N\), where we set \(N/L=\zent{K/L}\). Thus \(I_K(\lambda)N/N\) is a suitable proper subgroup of \(K/N\cong\PSL{t^a}\). 

If \(t\) is not a divisor of \(|K:I_K(\lambda)|\), then \(I_K(\lambda)N/N\) is of type (ii) and it is easy to see that \(I_K(\lambda)/L\) contains a Sylow \(t\)-subgroup of \(K/L\) as a normal subgroup. Assuming for the moment that this happens for every non-principal \(\lambda\in\irr{L/L'}\), an application of Lemma~4 in \cite{Z} and Lemma~\ref{SL2Nq} yields that \(K/L\) is isomorphic to \(\SL{t^a}\) and \(L/L'\) is the natural module for \(K/L\). So, in order to get conclusion (b), we only have to show that \(L'\) is trivial (note that, once this is proved, \(L=\oh t K\) is a minimal normal subgroup of \(G\)), and this is what we do next.

Writing \(U\) for \(L'\) we observe that, if \(K/L\cong\SL{t^a}\) and \(L/U\) is the natural module for \(K/L\), then \(\Delta(K/U)\) has two complete connected components, \(\{t\}\) and \(\V {K/L}-\{t\}\) (see the last paragraph of Section~2); recalling also that every prime in \(\pi(G/R)-\pi(K/L)\) is adjacent in \(\Delta(G/R)\) to all the other primes in \(\pi(G/R)-\{t\}\) (Theorem~\ref{MoretoTiep}), we see that the subgraph of \(\Delta(G)\) induced by the set \(\pi(G/R)-\{t\}\) is a clique, therefore $p\neq t$ and the set of neighbours of \(t\) in \(\Delta(G)\) consists of \(p\) only. Given that, for a proof by contradiction, assume \(U\neq 1\) and consider the non-abelian factor group \(L/U'\); since \(U/U'\) is an abelian normal subgroup of \(L/U'\) having \(t\)-power index, any non-linear irreducible character \(\phi\) of \(L/U'\) has a degree divisible by \(t\). Now, if \(p\neq 2\), then the non-adjacency between \(t\) and \(2\) yields that every irreducible character of \(K\) lying over \(\phi\) has odd degree, which implies (via Theorem~A of \cite{NT}) that \(K/L\) has abelian Sylow \(2\)-subgroups. Since this is not the case, we deduce that \(p=2\) and so \(|K:I_K(\phi)|\) is divisible only by \(2\) and \(t\). 

Note that \(\phi\) cannot be invariant in \(K\), because the Schur multiplier of \(K/L\) is trivial and we would easily get a contradiction by Gallagher's theorem. It is also not difficult to see that \(I_K(\phi)N/N\) cannot be a subgroup of type (i), (ii) or (iv) of \(K/N\). On the other hand, \(I_K(\phi)N/N\) cannot be isomorphic to \(S_4\) or to \(A_4\) as well, because we know that every prime in \(\pi_-\cup\pi_+\) does not divide \(|K:I_K(\phi)N|\) and we would have \(\pi_-\cup\pi_+\subseteq\{3\}\), against the assumption that both \(\pi_-\) and \(\pi_+\) are non-empty. It remains to consider the case \(I_K(\phi)N/N\cong A_5\). In this situation, \(\pi_-\cup\pi_+\) is forced to coincide with \(\{3,5\}\), hence \(t\not\in\{3,5\}\). Moreover, \(\phi\) does not have an extension to \(I_K(\phi)\), as otherwise we would get (again by Gallagher's theorem) that \(t\) is adjacent to \(3\) and to \(5\) in \(\Delta(G)\). Now, if \(I_K(\phi)\) contains \(N\), then \(I_K(\phi)/L\) is isomorphic to \(\SL 5\) (because \(K/L\cong\SL {t^a}\) has a unique involution); but the Schur multiplier of this group is trivial, so \(\phi\) would have an extension to \(I_K(\phi)\). It follows that \(I_K(\phi)\) does not contain \(N\) and, using character triples, we see that there exists an irreducible character of \(I_K(\phi)\) lying over \(\phi\) whose degree is divisible by \(3\). This again produces the adjacency between \(t\) and \(3\) in \(\Delta(G)\), the final contradiction that yields conclusion (b) in this case.


\smallskip

So, our assumption will be henceforth that there exists a non-principal \(\lambda\in\irr{L/U}\) such that \(t\) divides \(|K:I_K(\lambda)|\). Our aim will be to get conclusion (c) under this hypothesis.

We claim that, if this is the setting, then the vertices \(2\) and \(t\) are adjacent in \(\Delta(K)\) (thus, in \(\Delta(G)\)). Assuming the contrary, first of all we observe that \(I_K(\lambda)/L\) is forced to contain a Sylow \(2\)-subgroup of \(K/L\), and \cite[Theorem~A]{NT} ensures that \(K/L\) has abelian Sylow \(2\)-subgroups, which yields \(N=L\); in particular, \(I_K(\lambda)/N\) cannot be a subgroup of type~(ii) of \(K/N\), and it cannot be isomorphic to \(S_4\) or to ${\rm PGL}_2(t^b)$ for any \(b\) (recall that the Sylow \(2\)-subgroups of ${\rm PGL}_2(t^b)$ are dihedral groups). The remaining possibilities for \(I_K(\lambda)/N\) are then to be either of type~(i) or isomorphic to a group in the following list: \(A_4\), \(A_5\), \(\PSL{t^b}\) for a suitable divisor $b$ of $a$. Suppose first that \(\lambda\) does not have an extension to \(I_K(\lambda)\): then, working in a Schur cover of \(I_K(\lambda)/N\) for any of its possible isomorphism types and using the theory of character triples, we see that there exists an irreducible character of \(I_K(\lambda)\) lying over \(\lambda\) and having an even degree. In fact, if \(I_K(\lambda)/N\) is (non-cyclic) of type~(i), then any central extension of \(I_K(\lambda)/N\) has an abelian normal subgroup of index \(2\) and all irreducible character degrees in \(\{1,2\}\); the other cases can be easily checked. This is against our assumption that \(2\) and \(t\) are non-adjacent in \(\Delta(K)\). On the other hand, if \(\lambda\) does have an extension to \(I_K(\lambda)\), then Gallagher's theorem yields a contradiction for all the possible types of \(I_K(\lambda)/N\), except for the type \(A_4\). In this last case, however, by Gallagher's theorem the primes in \(\pi(K/N)-\{2\}\) are the vertices of a clique in \(\Delta(K)\): taking into account Lemma~\ref{PSL2bis} and Theorem~\ref{MoretoTiep} (together with the fact that \(\V G=\pi(G/R)\cup\{p\}\)), it is easily seen that \(\Delta(G)\) cannot have a cut-vertex, against our hypothesis. The claim concerning the adjacency between \(2\) and \(t\) is then proved. As a consequence, again in view of Lemma~\ref{PSL2bis} and Theorem~\ref{MoretoTiep}, we get \(p=2\).

Still assuming that there exists a non-principal \(\lambda\in\irr{L/U}\) such that \(t\) divides \(|K:I_K(\lambda)|\), we also claim that \(t\) must be adjacent in \(\Delta(K)\) to some odd prime divisor of \(t^{2a}-1\). This is clearly true if \(I_K(\lambda)N/N\) is of type (i), (ii) or (iv). As regards type (iii), assume that our claim is false. Then \(I_K(\lambda)N/N\) cannot be isomorphic to \(A_4\) or \(S_4\) by our assumption that both \(\pi_+\) and \(\pi_-\) are non-empty; on the other hand, if \(I_K(\lambda)N/N\cong A_5\), then we get \(\pi_-\cup\pi_+=\{3,5\}\), and Gallagher's theorem or the theory of character triples (according to whether \(\lambda\) extends to \(I_K(\lambda)\) or not) yield the contradiction that \(t\) is adjacent to \(3\) in \(\Delta(K)\).

Finally, given our assumption that \(\Delta(G)\) has a cut-vertex (and still having \ref{PSL2bis}, \ref{MoretoTiep} in mind), it is easy to see that the neighbours of \(t\) in \(\Delta(G)\) belonging to \(\pi(t^{2a}-1)-\{2\}\) must be either all in \(\pi_-\) or all in \(\pi_+\); moreover, there are no adjacencies in \(\Delta(G)\) between vertices lying in \(\pi_-\) and vertices lying in \(\pi_+\). 

\smallskip
Taking into account the structure of the graph \(\Delta(G)\) as described so far, we consider now a chief factor \(V=L/Y\) of \(K\) and discuss the action of \(K/L\) on its irreducible module \(V\). We will work actually on the dual module of \(V\), analyzing the subgroups \(I_K(\mu)N/N\) for \(\mu\) in \(\irr V-\{1_V\}\); recall that, since \(\mu\) can be regarded as a character of \(L/U\), the factor group \(I_K(\mu)N/N\) is a proper subgroup of \(K/N\). 

\smallskip
Let us start by assuming that \(t\) is adjacent in \(\Delta(K)\) to some vertex in \(\pi_-\). 
Then we know that \(t\) has no neighbours in \(\pi_+\) (as a vertex of \(\Delta(G)\)), so we can immediately exclude that \(I_K(\mu)N/N\) is a subgroup of type (i$_-$); on the other hand, \(I_K(\mu)N/N\) can be a subgroup of type (i$_+$), and in that case it contains a Sylow subgroup of \(K/N\) (as a normal subgroup) for every prime in \(\pi_+\). However, note that the latter situation cannot occur \emph{for all} non-principal \(\mu\in\irr V\), as otherwise we get a contradiction by Lemma~\ref{SL2Nq}.  

Suppose that \(I_K(\mu)N/N\) is of type (ii): in this case, \(I_K(\mu)N/N\) must contain a Sylow subgroup of \(K/N\) both for the prime \(t\) and for all the primes in \(\pi_-\),  therefore it is a Frobenius group whose complements have order divisible by every prime in \(\pi_-\) and it has irreducible characters whose degree is a multiple of all those primes. It is not difficult to see that \(I_K(\mu)/L\) has a normal \(2\)-complement \(H/L\), which enjoys the same properties mentioned above for \(I_K(\mu)N/N\). If \(\mu\) extends to \(H\), then Clifford's theory yields the adjacency in \(\Delta(K)\) of every prime in \(\pi_-\) with every prime in \(\pi_+\), not our case. 
On the other hand, if \(\mu\) does not extend to \(H\), then \(V=L/Y\) is a \(t\)-group (recall \cite[8.16, 11.22, 11.31]{Is}) and we reach a contradiction as well: in fact, in this case \(I_K(\mu)/Y\) has a normal Sylow \(t\)-subgroup \(T_0/Y\) and \(\mu\) does not extend to \(T_0\). Now, the restriction of any \(\phi\in\irr{I_K(\mu)|\mu}\) to \(T_0\) must have an irreducible constituent whose degree is divisible by \(t\), as otherwise some linear constituent of \(\phi_{T_0}\) would lie over \(\mu\) and would be an extension to \(T_0\) of it, so Clifford's theory would imply the adjacency of \(t\) with every prime in \(\pi_+\).  

We exclude next that \(I_K(\mu)N/N\) is of type (iv). If this is the case, certainly \(|K:I_K(\mu)|\) is divisible by \(t\) (thus not divisible by any prime in \(\pi_+\)), and therefore no irreducible character of \(I_K(\mu)\) lying over \(\mu\) has a degree divisible by a prime in \(\pi_+\); it is not difficult to check that both if \(\mu\) extends to \(I_K(\mu)\) and if it does not, using Gallagher's theorem or character triples respectively, we obtain a contradiction (note that \(I_K(\mu)N/N\) cannot be isomorphic to \(\PSL{9}\) or \({\rm{PGL}}_2(9)\) in this case, because any primitive prime divisor of \(3^{2a}-1\) lies in \(\pi_+\) and clearly divides \(|K:I_K(\mu)|\)).

Our conclusion so far is that, for every non-principal \(\mu\in\irr V\), the factor group \(I_K(\mu)N/N\) is either of type (i$_+$) or of type (iii); as observed, there must be some \(\mu\) as above for which the latter case holds. Now, if \(I_K(\mu)N/N\) is isomorphic to either \(S_4\) or \(A_4\), then \(t\) cannot be \(3\): otherwise the \(t\)-part of \(|K/N|\) would be larger than \(t\), so \(t\) would divide \(|K:I_K(\mu)|\) and no prime in \(\pi_+\) could be a divisor of \(|K:I_K(\mu)N|\). But this would force \(\pi_+\) to be empty, which is not the case. Thus, \(t\) is a divisor of \(|K:I_K(\mu)|\) and \(\pi_+\) is then forced to be \(\{3\}\). If \(\mu\) extends to \(I_K(\mu)\), then Clifford's theory yields the adjacency in \(\Delta(K)\) between \(t\) and \(3\), which is not allowed. It only remains the possibility that \(\mu\) does not extend to \(I_K(\mu)\), and this can happen only if \(V\) is a \(2\)-group. On the other hand, if \(I_K(\mu)N/N\cong A_5\), a similar argument shows that either \(t\) is larger than \(5\) and \(\pi_+\subseteq\{3,5\}\), or \(\{t\}\cup\pi_+=\{3,5\}\); in any case \(t\) divides \(|K:I_K(\mu)|\) and, if \(\mu\) extends to \(I_K(\mu)\), then we get the adiacency of \(t\) with the primes in \(\pi_+\). This cannot happen, so again we have that \(V\) is a \(2\)-group. 

To sum up, we can assume that \(V\) is an irreducible \(K/L\)-module over the field \(\F_2\). Moreover, there exists \(r\in\{3,5\}\) (dividing \(t^a+1\)) such that every non-principal irreducible character of \(V\) is centralized by an element of order \(r\). An application of Lemma~\ref{brauer} yields the final contradiction for this case.

\medskip
Let us move to the case when \(t\) is adjacent in \(\Delta(K)\) to some vertex in \(\pi_+\). 
Then we know that \(t\) has no neighbours in \(\pi_-\) (as a vertex of \(\Delta(G)\)), so we can immediately exclude that \(I_K(\mu)N/N\) is of type~(i$_+$).

We can also exclude that \(I_K(\mu)N/N\) is of type (iv) arguing as in the case when \(t\) is adjacent to a prime in \(\pi_-\), unless \(I_K(\mu)N/N\) is isomorphic to \(\PSL{9}\) or \({\rm{PGL}}_2(9)\). In the latter case, it is not difficult to see that \(I_K(\mu)/L\) has a normal subgroup \(H/L\) isomorphic either to \(\PSL{9}\) or to \(\SL{9}\); recalling that \(\ker \mu\) is a normal subgroup of \(I_K(\mu)\) with \(L/\ker \mu\subseteq\zent{I_K(\mu)/\ker\mu}\), we consider the factor group \(H/\ker\mu\). If this factor group splits over \(L/\ker\mu\), then \(\mu\) extends to \(H\), and there certainly exists an irreducible character of \(I_K(\mu)\) lying over \(\mu\) with a degree divisible by \(5\); now, Gallagher's theorem yields the contradiction that \(t=3\) is adjacent to \(5\in\pi_-\). On the other hand, if \(H/\ker\mu\) does not split over \(L/\ker\mu\), then \(L/\ker\mu\) embeds in the Schur multiplier of \(H/L\) (so, \(|L/\ker\mu|\in\{2,3\}\)), and using character triples we see that \(H/\ker\mu\) has irreducible characters lying over \(\mu\) having a degree divisible by \(5\). As a consequence, \(I_K(\mu)\) has irreducibe characters lying over \(\mu\) having a degree divisible by \(5\), again producing the same contradiction as above.

Next, \(I_K(\mu)N/N\) can be of type (i$_-$), and in that case it contains a Sylow subgroup of \(K/N\) (as a normal subgroup) for every prime in \(\pi_-\). Clearly, \(I_K(\mu)/L\) contains a Sylow subgroup of \(K/L\) as a normal subgroup for every prime in \(\pi_-\) as well.

The factor group \(I_K(\mu)N/N\) can also be of type (ii) (with an order divisible by \(t\)): if so, recalling that there are no adjacencies between vertices in \(\pi_-\) and vertices in \(\pi_+\), then it must contain a Sylow subgroup of \(K/N\) for all the primes in \(\pi_-\) (of course \(I_K(\mu)/L\) contains a Sylow subgroup of \(K/L\) for every prime in \(\pi_-\) as well), and it is a Frobenius group whose kernel is a \(t\)-group and whose complements have an order divisible by every prime in \(\pi_-\). 
We claim that \(I_K(\mu)N/N\) must contain a full Sylow \(t\)-subgroup of \(K/N\).
To show this, we can clearly assume \(a>1\); if \(t^a-1\) has a primitive prime divisor \(q\), then \(q\) lies in \(\pi_-\) and so the Sylow \(t\)-subgroup of \(I_K(\mu)N/N\) (on which an element of order \(q\) of \(I_K(\mu)N/N\) acts fixed-point freely) cannot have order less than \(t^a\). On the other hand, assume that \(t^a-1\) does not have a primitive prime divisor and, for a proof by contradiction, that \(I_K(\mu)N/N\) does not contain a full Sylow \(t\)-subgroup of \(K/N\). Then \(a=2\), \(t\) is a Mersenne prime, and \(I_K(\mu)N/N\) has a (normal) subgroup \(H/N\) that is a Frobenius group whose kernel has order \(t\) and whose complements are cyclic of odd order \(\frac{t-1}{2}\). Setting \(H_0=H\cap I_K(\mu)\), we then get that the normal subgroup \(H_0/L\) of \(I_K(\mu)/L\) has irreducible characters of degree divisible by every prime in \(\pi_-\), and it has a trivial Schur multiplier because every Sylow subgroup of \(H_0/L\) is cyclic. Now \(\mu\) extends to \(H_0/L\), which easily yields that \(I_K(\mu)\) has irreducible characters lying over \(\mu\) and having a degree divisible by every prime in \(\pi_-\). As usual, we get a contradiction via Gallagher's theorem.

It remains to consider type (iii). Assume that \(\mu\in\irr V\) is such that \(I_K(\mu)N/N\) is isomorphic to either \(S_4\) or \(A_4\). Then \(t\) cannot be \(3\): otherwise the \(t\)-part of \(|K/N|\) would be larger than \(t\), so \(t\) would divide \(|K:I_K(\mu)|\) and no prime in \(\pi_-\) could be a divisor of \(|K:I_K(\mu)N|\). But this would force \(\pi_-\) to be empty, which is not the case. 
Thus, \(t\) is a divisor of \(|K:I_K(\mu)|\) and \(\pi_-\) is then forced to be \(\{3\}\). If \(\mu\) extends to \(I_K(\mu)\), then Clifford's theory yields the adjacency in \(\Delta(K)\) between \(t\) and \(3\), which is not allowed. It only remains the possibility that \(\mu\) does not extend to \(I_K(\mu)\), and this can happen only if \(V\) is a \(2\)-group. On the other hand, if \(I_K(\mu)N/N\cong A_5\), a similar argument shows that either \(t\) is larger than \(5\) and \(\pi_-\subseteq\{3,5\}\), or \(\{t\}\cup\pi_-=\{3,5\}\); in any case \(t\) divides \(|K:I_K(\mu)|\) and, if \(\mu\) extends to \(I_K(\mu)\), then we get the adiacency of \(t\) with the primes in \(\pi_-\). This cannot happen, so again we have that \(V\) is a \(2\)-group. 

To sum up, if there exists \(\mu\in\irr V\) such that \(I_K(\mu)N/N\) is of type (iii), then \(V\) is a non-trivial irreducible \(K/L\)-module over the field \(\mathbb{F}_2\) and the set \(\pi_-\) is contained in \(\{3,5\}\). But now, for \(r\in\pi_-\), the discussion in the last three paragraphs above ensures that the centralizer in \(K/L\) of every element of $V$ contains an element of order \(r\): an application of Lemma~\ref{brauer} yields a contradiction (note that \(t^a=7\) is excluded here by the fact that \(\pi_+\) is assumed to be non-empty). 

Our conclusion so far is that, for every non-trivial \(\mu\in\irr V\), the factor group \(I_K(\mu)/L\) either contains a unique Sylow subgroup of \(K/L\) for every prime in \(\pi_-\), or it contains a unique Sylow \(t\)-subgroup of \(K/L\). Recall, however, that \(V\) is not the natural module for \(K/L\) because the factor groups \(I_K(\mu)/L\) are never Sylow \(t\)-subgroups of \(K/L\) in the present situation. By (Lemma~\ref{SL2Nq} and) Theorem~\ref{TipoIeIIPieni} or Theorem~\ref{TipoIeII}, depending on whether \(V\) has order coprime to \(t\) or is a \(t\)-group, we then get that \(t^a=13\), \(K/L\) is isomorphic to \(\SL{13}\), and \(V\) is one of the two \(6\)-dimensional irreducible modules for \(\SL{13}\) over \(\F_3\).

\smallskip
In order to complete the proof of conclusion (c), it remains to show that actually \(|L|=3^6\) (which also implies \(L=\oh 3 K\trianglelefteq G\)). Observe first that, by our argument so far, \(|L/W|=3^6\) \emph{for every choice} of a chief factor \(L/W\) of \(K\); this implies that \(L/U\) is actually a \(3\)-group. If \(U\neq 1\), we see that every non-linear irreducible character \(\phi\) of \(L/U'\) has a degree divisible by \(3\). Now, such a \(\phi\) is not \(K\)-invariant, as otherwise it would have an extension to \(K\) (because the Schur multiplier of \(K/L\) is trivial) and we would get a contradiction via Gallagher's theorem; on the other hand, every maximal subgroup of \(K/N\cong\PSL{13}\) has an index in \(K/N\) that is divisible either by \(7\) or by \(13\), thus \(I_K(\phi)N\) cannot be a proper subgroup of \(K\) and we have a contradiction. Our conclusion so far is that \(U=1\), i.e. \(L\) is an abelian \(3\)-group.

Taking into accout that (again by \cite[8.16, 11.22, 11.31]{Is}) every irreducible character of \(L\) extends to its inertia subgroup in \(K\), we are now ready to finish our argument. Let \(\theta\) be a non-principal irreducible character of \(L\), and observe that \(\theta\) cannot be \(K\)-invariant (as the Schur multiplier of \(\SL{13}\) is trivial); therefore, \(I_K(\theta)N/N\) is contained in a maximal subgroup of \(K/N\). Now, the index of a maximal subgroup of \(\PSL{13}\) lies in the set \(\{14,78,91\}\), but the non-adjacency between \(3\) and \(13\) in \(\Delta(G)\) rules out the index \(78\) for a maximal subgroup containing \(I_K(\theta)N/N\). On the other hand, in all the remaining cases, \(3\) divides the order of \(I_K(\theta)N/N\); but now \(I_K(\theta)/L\) has an irreducible character of degree \(3\), yielding contradictory adjacencies via Gallagher's theorem (recall that \(3\) is also non-adjacent to \(7\) in \(\Delta(G)\)), except when it contains a Sylow \(3\)-subgroup of \(K/L\) as a normal subgroup. Since this holds for every non-principal \(\theta\in\irr L\), \cite[Lemma~4]{Z} yields that \(L\) is an irreducible \(K/L\)-module, completing the proof.
\end{proof}


\section{A proof of the main result}

In this section we prove the Main Theorem stated in the Introduction, which provides a characterization of the groups having a normal section isomorphic to \(\PSL {t^a}\) (for \(t\neq 2\) and \(t^a>5\)) and whose degree graph is connected with a cut-vertex. 

\begin{proof}[Proof of the Main Theorem] We start by proving the ``only if" part of the statement: we assume that the group \(G\) has a composition factor isomorphic to $\PSL{t^a}$ for a suitable odd prime \(t\) with \(t^a> 5\), and the graph \(\Delta(G)\) is connected with cut-vertex \(p\). 

As usual, this implies (via Theorem~\ref{0.2}) that \(G/R\) is an almost-simple group whose socle is isomorphic to \(\PSL{t^a}\), and \(\V G=\pi(G/R)\cup\{p\}\). Moreover, by Theorem~A of \cite{W1}, if \(t\) is a divisor of \(|G/KR|\) then both \(t\) and \(2\) turn out to be complete vertices of the graph \(\Delta(G/R)\), against our hypothesis on \(\Delta(G)\). Also, if \(p=t\), then we get \(\V G=\pi(G/R)\); but again \cite[Theorem~A]{W1} yields that the subgraph of \(\Delta(G/R)\) induced by \(\pi(G/R)-\{t\}\) is connected, not our case.

It remains to prove that one among (a), (b) and (c) holds and, in this respect, much of the work has been done in Theorem~\ref{MainAboutK}. Namely, we will only have to show that \(\V{G/K}=\{p\}\) in cases (a) and (b), whereas \(p=2\) and \(\V {G/K}\subseteq\{2\}\) in case (c). 

Set \(N=K\cap R\) (so, \(K/N\cong\PSL{t^a}\)): we start by proving that 
\(t\) cannot lie in \(\V{G/K}\). In fact, assuming the contrary and recalling that \(t\) does not divide \(|G/KR|\), we would have (by Ito-Michler's theorem) that \(t\) divides the degree of some irreducible character \(\phi\) of \(R/N\cong KR/K\). Now, as \(KR/N\) is the direct product of \(K/N\) and \(R/N\), it easily follows that \(t\) is adjacent in \(\Delta(G)\) to every prime in \(\pi(K/N)-\{t\}=\pi(KR/R)-\{t\}\). But an application of (Clifford's theory and) Proposition~2.6(a) in \cite{ACDPS} yields that \(t\) is adjacent in \(\Delta(G)\) to every prime \(r\) in \(\pi(G/R)-\pi(KR/R)\) as well: in fact, according to that proposition, there exists an irreducible character \(\theta\) of \(K/N\) such that \(|G:I_G(\theta)|\) is divisible by \(r\), and our claim follows considering any irreducible character of \(G\) lying over \(\phi\theta\in\irr{KR}\). So, \(t\) turns out to be a complete vertex in the subgraph of \(\Delta(G)\) induced by \(\pi(G/R)\); this forces \(p=t\), against what observed above.

Next, we show that any prime \(q\in\V{G/K}\) is adjacent in \(\Delta(G)\) to all the primes in \(\pi(G/R)-\{q\}\). Take then \(q\in\V{G/K}\) (recall that \(q\neq t\) by the previous paragraph): 
it is well known that the Steinberg character ${\sf St}$ of \(K/N\) has an extension to \(G\) (see for instance \cite{S}). Therefore, recalling that ${\sf St}(1)=t^a$, Gallagher's theorem yields that \(q\) is adjacent to \(t\) in \(\Delta(G)\). Now, if \(q\) divides \(|G/KR|\), then \(q\) is adjacent in \(\Delta(G/R)\) to every prime in \(\pi(G/R)-\{q,t\}\) by \cite[Theorem~A]{W1}; but since \(q\) is also adjacent to \(t\) in \(\Delta(G)\), we are done in this case. On the other hand, if \(q\) does not divide \(|G/KR|\), then \(q\) divides the degree of some irreducible character of \(KR/K\) (hence of \(R/N\)). Since \(KR/N=K/N\times R/N\), clearly \(q\) is adjacent in \(\Delta(G)\) to every prime in \(\pi(K/N)-\{q\}=\pi(KR/R)-\{q\}\); but $q$ is also adjacent in \(\Delta(G)\) to every prime in \(\pi(G/R)-\pi(KR/R)\), by the same argument involving  \cite[Proposition~2.6(a)]{ACDPS} that was used in the previous paragraph. The desired conclusion follows.

As a consequence of the paragraph above, if there exists a prime \(q\in\V{G/K}-\{p\}\), then $q$ is a complete vertex in the subgraph of \(\Delta(G)\) induced by \(\pi(G/R)\), which contradicts the fact that \(p\neq q\) is a cut-vertex of \(\Delta(G)\). In particular, we get  \(\V{G/K}\subseteq\{p\}\). Another immediate consequence is that, if (conversely) \(p\) lies in \(\V{G/K}\), then \(p\) is a complete vertex of \(\Delta(G)\).


For cases (a) and (b) we actually see that \(p\in\V{G/K}\). Assuming the contrary, we would have that \(G/K\) is abelian, which implies \(R/N\subseteq\zent{G/N}\). Now Theorem~\ref{LewisWhite} yields that \(\Delta(G)\) is disconnected, a contradiction (observe that the subgroup \(C\) of Theorem~\ref{LewisWhite} coincides with \(R\) in our situation). As for case (c), it can be checked via GAP \cite{GAP} that the set of irreducible character degrees of \(K\) is \(\{1,\;2\cdot 3,\;7,\,2^2\cdot 3,\;13,\;2\cdot 7,\;2^3\cdot 7\cdot 13\}\), therefore \(\pi(G/R)=\{2,3,7,13\}\) induces a connected subgraph of \(\Delta(G)\) and we get \(\V G=\pi(G/R)\); as \(2\) is a complete vertex of \(\Delta(G)\), we necessarily have \(p=2\) and the ``only if" part of the statement is proved. Observe that,  still in case (c), \(t=13\) is adjacent in \(\Delta(G)\) to the cut-vertex \(2\) but also to \(7\), so the exception pointed out in the last sentence of the statement is a genuine one. Note also that, by the last observation of the previous paragraph, \(p\) is a complete vertex of \(\Delta(G)\) in cases (a) and (b) because it lies in \(\V{G/K}\), but \(p=2\) is a complete vertex in case (c) as well, thus proving the first claim in the sentence that concludes the statement.

\medskip
We move now to the ``if" part. Observe first that, by Theorem~\ref{LewisWhite} and Theorem~4.1 of \cite{LW}, \(\Delta(G)\) is a connected graph under our hypotheses. Setting \(N=K\cap R\) as above, consider the Steinberg character ${\sf St}$ of \(K/N\); we already observed that ${\sf St}$, viewed by inflation as an irreducible character of \(K\) having degree \(t^a\), has an extension to \(G\). Thus, by Gallagher's theorem, \(t^a\cdot \psi(1)\) is the degree of an irreducible character of \(G\) for every \(\psi\in\irr{G/K}\). Since one of our assumptions in cases (a) and (b) is that \(p\) divides the degree of some irreducible character of \(G/K\), we get the adjacency of \(t\) with \(p\) in \(\Delta(G)\). Furthermore, if \(K\) is as in (c) then, as noted in the paragraph above, \(2\) is a complete vertex of \(\Delta(G)\), thus it is the only possible cut-vertex of \(\Delta(G)\) and it also adjacent to the vertex \(3\) in \(\Delta(G)\).

In order to conclude the proof, we will show that there exists \(v\in\V G\) which is adjacent \emph{only} to \(p\) in \(\Delta(G)\): in particular, \(v\) is the vertex \(t\) in cases (a) and (b) (which settles also the last claim of the statement), whereas it is the vertex \(3\) in case (c). To this end we will first prove that, for every \(\chi\in\irr G\) such that \(v\in\pi(\chi(1))\) and \(\ker\chi\supseteq L\), we have \(\pi(\chi(1))\subseteq\{v,p\}\) (where \(L\) is set to be the trivial group in case (a)); secondly, we will see that \(v\) does not divide \(\chi(1)\) for every \(\chi\in\irr G\) with \(\ker\chi\not\supseteq L\). 

So, let us start with \(\chi\in\irr G\) such that \(v\in\pi(\chi(1))\) and \(\ker\chi\supseteq L\). Consider first cases (a) and (b) (so, we set \(v=t\)). Since \(t\) does not divide \(|G/KR|\), the degree of an irreducible constituent \(\xi\) of \(\chi_{KR}\) is necessarily divisible by \(t\); moreover, taking into account that \(KR/L\) is a central product of \(K/L\) and \(R/L\), we have that \(KR/L\) is isomorphic to a quotient of the direct product \(K/L\times R/L\), hence \(\xi\) can be viewed as the product of two suitable irreducible characters \(\alpha\), \(\beta\) of \(K/L\) and \(R/L\), respectively. Since \(\V{G/K}=\{p\}\) and \(p\neq t\), it easily follows that \(t\) does not divide any irreducible character degree of \(R/L\), so \(t\) necessarily divides \(\alpha(1)\). But \(K/L\) has a unique irreducible character of degree divisible by \(t\) (see \cite[Theorem~2.1(i)]{GGLMNT}), that is ${\sf St}$: hence \(\alpha={\sf St}\) and, as \(\alpha\) has an extension to \(G\), \(\chi(1)\) is the product of \(\alpha(1)=t^a\) with the degree of some irreducible character of \(G/K\). We conclude that \(\pi(\chi(1))\subseteq\{p,t\}\), as wanted. As regards case (c) (for which we set \(v=3\)), we have \(G=KR\) and \(\chi\) can be thought as an irreducible character of \(K/L\times R/L\), as above; since \(3\) is only adjacent to \(p=2\) in \(\Delta(K/L)\), and \(\V{R/L}\subseteq\{2\}\), we easily deduce that \(\pi(\chi(1))\subseteq\{3,p\}\) in this case as well.

Finally, let \(\chi\in\irr G\) be such that \(\chi_L\) has a non-principal irreducible constituent \(\lambda\) (obviously case~(a) is not involved here). 
 Denoting by \(V_0\) a Sylow \(v\)-subgroup of \(R\), the hypothesis \(\V{G/K}\subseteq\{p\}\) implies that the factor group \(V_0N/N\) is an abelian normal Sylow \(v\)-subgroup of \(R/N\cong KR/K\). It is then easily seen that \(V_0/L\) is an abelian normal Sylow \(v\)-subgroup of \(R/L\) (in particular, \(V_0\trianglelefteq G\)), and recall also that, by Lemma~\ref{singer}, we have \(L\subseteq\zent {V_0}\). Now, consider the normal subgroup \(KV_0\) of \(G\): taking into account that, both in case (b) and in case (c), \(I_K(\lambda)/L\) is a Sylow \(v\)-subgroup of \(K/L\), we have that \(I_{KV_0}(\lambda)=I_K(\lambda)V_0\) is a Sylow \(v\)-subgroup of \(G\). Furthermore, again in both cases (b) and (c), \(N/L\) acts fixed-point freely on \(L\); thus the abelian subgroup \(L\) has a complement in \(G\) by the proposition in the Introduction of \cite{Cu}. It follows that \(\lambda\) extends to \(I_{KV_0}(\lambda)\) and, as \(I_{KV_0}(\lambda)/L\cong (I_K(\lambda)/L)\times (V_0/L)\) is abelian, every irreducible constituent of \(\lambda^{I_{KV_0}(\lambda)}\) is linear. Now Clifford's theory (together with the fact that \(v\) does not divide \(|KV_0:I_{KV_0}(\lambda)|\)) yields that \(v\) does not divide the degree of any irreducible character of  \(KV_0\) lying over \(\lambda\). But then \(v\) does not divide \(\psi(1)\), where \(\psi\) is an irreducible constituent of \(\chi_{KV_0}\) lying over \(\lambda\). As \(v\) does not divide \(|G:KV_0|\) as well, it follows that \(v\) does not divide \(\chi(1)\), and the proof is complete.
\end{proof}

We conclude the paper with the following remark, that compares the structure of the groups appearing in the Main Theorem with the structure of the non-solvable groups whose degree graph has two connected components.


\begin{rem}\label{comparison}
Let \(G\) be a group satisfying the assumptions of the ``only if" part of the Main Theorem, i.e. \(G\) has a composition factor isomorphic to \(\PSL{t^a}\) for a suitable odd prime \(t\) (with $t^a>5$), and \(\Delta(G)\) is connected with cut-vertex \(p\). Then, by the Main Theorem, the structure of \(G\) turns out to be very similar to the structure of a non-solvable group \(G\) whose graph \(\Delta(G)\) has two connected components (see Theorem~\ref{LewisWhite}), with the only exception of case (c).

In fact, in cases (a) and (b), we know that there exist normal subgroups \(K\supseteq N\) such that \(K/N\cong\PSL{t^a}\); these subgroups are respectively the last term in the derived series of \(G\), and \(N=K\cap R\). Also, observing that the subgroup denoted by \(C\) in the statement of Theorem~\ref{LewisWhite} is in fact \(R\), we have that \(t\) does not divide \(|G/KR|\). Finally, if \(N\neq 1\), then either \(K\cong\SL{t^a}\) or there exists a minimal normal subgroup \(L\) of \(G\) such that \(K/L\cong\SL{t^a}\) and \(L\) is isomorphic to the natural module for \(K/L\).

In other words, \(G\) has precisely the structure prescribed by Theorem~\ref{LewisWhite} except for two aspects: rather than being empty, the set \(\V{G/K}\) consists of a single prime which is the cut-vertex \(p\), and the vertex set of \(G\) is \(\pi(G/R)\cup\{p\}\) rather than \(\pi(G/R)\).
This can be somewhat surprising, since the graph-theoretical condition expressed in Theorem~\ref{LewisWhite} is in principle much stronger than that of the Main Theorem.

\smallskip
Of course, the two relevant classes of groups behave similarly also from the point of view of the graphs: the only difference is that the groups of the Main Theorem have a degree graph with a complete vertex \(p\) (that may be already in \(\pi(G/R)\) or not), which is the unique neighbour of \(t\) and which guarantees the connectedness of the graph. For a description of the relevant graphs, we refer the reader to \cite[Section~2]{DPSS}.

On the other hand, the class of groups as in case (c) of the Main Theorem is different and doesn't show up in Theorem~\ref{LewisWhite}. As already pointed out, this is the only exception to the fact that \(p\) (which is \(2\)) is the unique neighbour of \(t=13\) in \(\Delta(G)\) (see Introduction, Figure~\ref{c}). 
\end{rem}




\begin{thebibliography}{99}
	

\bibitem{ACDPS} Z. Akhlaghi, C. Casolo, S. Dolfi, E. Pacifici, L. Sanus, \emph{On the character degree graph of finite groups}, Ann. Mat. Pura Appl. 198 (2019), 1595--1614.


\bibitem{Bo} C. Bonnaf\'e, \emph{Representations of \(\SL{\F_q}\)}, Springer, London, 2011.


\bibitem{B} R. Burkhardt, \emph{Die Zerlegungsmatrizen der Gruppen \(\PSL{p^f}\)}, J. Algebra 40 (1976), 75--96.

\bibitem{C} C. Casolo, \emph{Some linear actions of finite groups with \(q'\)-orbits}, J. Group Theory 13 (2010), 503--534.

\bibitem{ATLAS} J.H. Conway, R.T. Curtis, S.P. Norton, R.A.
Parker, R.A. Wilson, \emph{Atlas of finite groups}, Clarendon Press,
Oxford, 1985.


\bibitem{Cu} P.M. Curran, \emph{Fixed-point-free actions on a class of abelian groups}, Proc. Amer. Math. Soc. 57 (1976), 189--193. 


\bibitem{DPSS} S. Dolfi, E. Pacifici, L. Sanus, V. Sotomayor, \emph{Non-solvable groups whose character degree graph has a cut-vertex. I}, preprint 2022, submitted.

\bibitem{DPS3} S. Dolfi, E. Pacifici, L. Sanus, \emph{Non-solvable groups whose character degree graph has a cut-vertex. III}, preprint 2022, submitted.


  \bibitem{GAP} The GAP Group, GAP -- Groups, Algorithms, and Programming, Version 4.11.1; 2021. (https://www.gap-system.org).
  
 \bibitem{GGLMNT} D. Goldstein, R. M. Guralnick, M. L. Lewis, A. Moret\' o, G. Navarro and P. H. Tiep. \emph{
Groups with exactly one irreducible character of degree divisible by $p$},  Algebra Number Theory 8 (2014), no. 2, 397--428.

\bibitem{He} M.D. Hestenes, \emph{Singer groups}, Canadian J. Math. 22 (1970), 492--513.

\bibitem{mainz} B. Huppert, \emph{Research in representation theory at Mainz (1984--1990)}, Representation theory of finite groups and finite-dimensional algebras (Bielefeld, 1991), 17--36, 
Progr. Math. 95 (1991), Birkh\"auser, Basel.

  \bibitem{Hu} B. Huppert, \emph{Endliche Gruppen I}, Springer, Berlin, 1967.

\bibitem{HB} B. Huppert, N. Blackburn, \emph{Finite Groups II}, Springer, Berlin, 1982.

 \bibitem{Is} I.M. Isaacs, \emph{Character theory of finite
groups}, Academic Press, New York, 1976.


\bibitem{KP} C. K\"ohler, H. Pahlings, \emph{Regular orbits and the \(k(GV)\)-problem}, in: Groups and Computation III: Proceedings of the International Conference at the Ohio State University, June 15--19, (1999, 2001), 209--228.

\bibitem{L} M. Lee, \emph{Regular orbits of quasisimple linear groups I}, J. Algebra 586 (2021), 1122--1194.

\bibitem{overview} M.L. Lewis, \emph{An overview of graphs associated with character degrees and conjugacy classs sizes in finite groups}, Rocky Mt. J. Math. 38 (2008), 175--211.

\bibitem{LM} M.L. Lewis, Q. Meng, \emph{Solvable groups whose prime divisor character degree graphs are 1-connected}, Monatsh. Math. 190 (2019), 541--548.


\bibitem{LW} M.L. Lewis, D.L. White, \emph{Connectedness of degree graphs of nonsolvable groups}, J. Algebra 266 (2003), 51--76. 
\bibitem{LWc} M.L. Lewis, D.L. White,  \emph{Corrigendum to: ``Connectedness of degree graphs of nonsolvable groups''}. J. Algebra 290 (2005), no. 2, 594--598.
\bibitem{MW}  O. Manz, T.R. Wolf, \emph{Representations of Solvable Groups},
Cambridge University Press, Cambridge, 1993. 

\bibitem{NT} G. Navarro, P.H. Tiep, \emph{Characters of relative $p'$-degree over normal subgroups}, Annals of Mathematics 178 (2013), 1135--1171.
\bibitem{PR} C. Parker, P. Rowley, \emph{Symplectic Amalgams}, Springer, London, 2002.
\bibitem{S} P. Schmid, \emph{Extending the Steinberg representation}, J. Algebra 150 (1992), 254--256.
\bibitem{W} D.L. White, \emph{Degree graphs of simple groups}, Rocky Mountain J. Math. 39 (2009), 1713--1739.
\bibitem{W1}D.L. White,\emph{Character degrees of extensions of $PSL_2(q)$ and $SL_2(q)$}, J. Group Theory 16 (2013), 1--33.
\bibitem{Wo} T.R. Wolf, \emph{Character correspondences in solvable groups}, Illinois J. Math. 22 (1978), 327--340.
\bibitem{Z} J. Zhang, \emph{A note on character degrees of finite solvable groups}, Comm. Algebra 9 (2000), 4249--4258.
	
\end{thebibliography}
\end{document}